\newcommand{\NN}{\mathbb{N}}
\newcommand{\ZZ}{\mathbb{Z}}
\newcommand{\RR}{\mathbb{R}}
\newcommand{\ol}[1]{\overline{#1}}
\newcommand{\wt}[1]{\widetilde{#1}}
\newcommand{\M}{\mathfrak{M}}
\newcommand{\Aut}{\mathrm{Aut}}
\newcommand{\U}{\mathcal{U}}
\newcommand{\Gr}{\mathbf{Gr}}
\newcommand{\dGr}{\vec{\Gr}}
\newcommand{\C}{\mathbf{C}}
\newcommand{\N}{\mathcal{N}}
\newcommand{\Rcc}{\mathrm{Rcc}}
\newcommand{\BRcc}{\mathrm{BRcc}}
\newcommand{\Cay}{\mathrm{Cay}}
\newcommand{\e}{\varepsilon}
\newcommand{\ceil}[1]{\left\lceil #1 \right\rceil}
\newtheorem{theo}{Theorem}[section]
\newenvironment{introthm}[1]{\begin{theo}}{\end{theo}}
\newtheorem{lem}[theo]{Lemma}
\newtheorem{prop}[theo]{Proposition}
\newtheorem{cor}[theo]{Corollary}
\theoremstyle{definition}
\newtheorem{defn}[theo]{Definition}
\newtheorem{eg}[theo]{Example}
\newtheorem{rem}[theo]{Remark}
\begin{document}
\renewcommand{\thepage}{\roman{page}}
\thispagestyle{empty}

\begin{center}
\LARGE\upshape{On Weak Limits and Unimodular Measures}
\end{center}

\vfill

\begin{center}
\large\rm Igor Artemenko
\end{center}

\vfill

\begin{center}
Thesis submitted to the Faculty of Graduate and Postdoctoral Studies in partial fulfillment of the requirements for the degree of Master of Science in Mathematics\footnote{The M.Sc. program is a joint program with Carleton University, administered by the Ottawa-Carleton Institute of Mathematics and Statistics}
\end{center}

\vfill

\begin{center}
Department of Mathematics and Statistics\\
Faculty of Science\\
University of Ottawa
\end{center}

\vfill

\begin{center}
\copyright\ Igor Artemenko, Ottawa, Canada, 2014
\end{center}
\clearpage

%
%
%
%
\section*{Abstract}

In this thesis, the main objects of study are probability measures on the isomorphism classes of countable, connected rooted graphs. An important class of such measures is formed by unimodular measures, which satisfy a certain equation, sometimes referred to as the intrinsic mass transport principle. The so-called law of a finite graph is an example of a unimodular measure. We say that a measure is sustained by a countable graph if the set of rooted connected components of the graph has full measure. We demonstrate several new results involving sustained unimodular measures, and provide thorough arguments for known ones.

In particular, we give a criterion for unimodularity on connected graphs, deduce that connected graphs sustain at most one unimodular measure, and prove that unimodular measures sustained by disconnected graphs are convex combinations. Furthermore, we discuss weak limits of laws of finite graphs, and construct counterexamples to seemingly reasonable conjectures.
\cleardoublepage

%
%
%
%
\section*{Acknowledgements}

I am grateful to my supervisor Dr. Vladimir Pestov with whom I met frequently, and whose constant advice shaped my research into the thesis you see before you. Dr. Pestov encouraged my interest in unimodularity, and his guidance during my undergraduate work in the Summers of 2010 and 2011 gave me an excellent base to work with. Our conversations were enjoyable and usually led to new insights and ideas. Thank you, Professor.

I would like to express my sincere gratitude for the NSERC CGS M (Alexander Graham Bell Canada Graduate Scholarship) and the OGS (Ontario Graduate Scholarship), which provided financial support during the course of my Master's degree. In addition, I am thankful for the internal scholarships given by the Department of Mathematics and Statistics and the Faculty of Graduate and Postdoctoral Studies at the University of Ottawa, which alleviated my worries about tuition fees.

Furthermore, I give thanks to my wonderful parents, Alexandr and Lyudmila, who supported me throughout my entire life, including the final stretch of this thesis. Without them, I would not have had nearly as much success in life. Thank you for giving me the wonderful opportunities I have today, and for passing on your remarkable work ethic.

Most importantly, I am extremely fortunate to have found the love of my life while finishing my graduate degree. My incredible girlfriend \'Emilie made sure I did not dwell on false hopes during my research, and kept me moving forward. She tried her best to keep me happy, and reminded me to relax once in a while. I do not think I could have succeeded were it not for her.
\cleardoublepage

%
%
\tableofcontents
\clearpage

%
%
\phantomsection
\addcontentsline{toc}{section}{List of Figures}
\listoffigures
\clearpage

%
%
\setcounter{page}{1}
\renewcommand{\thepage}{\arabic{page}}

\phantomsection
\addcontentsline{toc}{section}{Introduction}
%
%
\section*{Introduction}

The topic of unimodularity is discussed in many papers, and it is defined in several ways. It was introduced by Benjamini and Schramm for the purpose of applying the mass transport principle in a graph theoretic setting \cite{benjaminischramm01}, and Aldous and Steele, who used it for deeper applications of the objective method \cite{aldoussteele03}. Since then, it has been reformulated to suit the needs of the author. Prominent authors in this area include G\'abor Elek, Russell Lyons, and David Aldous whose works have provided much of the inspiration for this thesis \cite{aldouslyons07,elek07,elek10}. However, our approach is different, and the conclusions were usually discovered independently. In addition to new results, this thesis includes detailed arguments of known results, as well as generalizations of this author's previous statements \cite{artemenko11a,artemenko11b}.

The background knowledge for the majority of this thesis is rigorously presented in the previous works of this author \cite{artemenko11a,artemenko11b}. Nevertheless, the ideas are carefully discussed to ensure that the majority of this thesis is self-contained.

The collections $\Gr$ of rooted graphs, and $\dGr$ of birooted graphs are compact ultrametric spaces. Every graph $X$ induces a subset $\Rcc(X) \subseteq \Gr$ of rooted connected components $[X_x,x]$ for all $x \in V(X)$. A probability measure $\mu$ on $\Gr$ is sustained by $X$ if $\Rcc(X)$ has full measure; $\mu$ is strictly sustained if each rooted connected component has a positive measure. In the context of measures sustained by countable graphs, a probability measure $\mu$ sustained by a graph $X$ is unimodular if and only if
\[
\sum_{[X_x,x] \in \Rcc(X)} \sum_{y \in N(x)} f[X_x,x,y] \cdot \mu[X_x,x] = \sum_{[X_x,x] \in \Rcc(X)} \sum_{y \in N(x)} f[X_x,y,x] \cdot \mu[X_x,x]
\]
for all nonnegative functions $f$ on $\dGr$. As an extension of the result shown in this author's Honours project \cite{artemenko11a}, we demonstrate that a unimodular measure sustained by a connected graph is strictly sustained.

Graphs that sustain unimodular measures are said to be judicial; those that do not are lawless. Every finite graph $X$ is judicial because it sustains the probability measure $\Psi(X)$ known as the law of $X$:
\[
\Psi(X)[X_x,x] = \frac{|\Aut(X)x|}{|V(X)|}
\]
for all $[X_x,x] \in \Rcc(X)$, and $\Psi(X) = 0$ otherwise \cite{schramm08}.

We discuss and demonstrate the following new observations. Infinite connected graphs whose orbits are all finite are lawless. In particular, the same is true for infinite connected rigid graphs. On the other hand, connected graphs sustain at most one unimodular measure.

\begin{introthm}{\ref{uniqueness_of_sustained_unimodular_measures}}
Let $X$ be a connected graph. If $\mu$ and $\nu$ are unimodular measures sustained by $X$, then $\mu = \nu$.
\end{introthm}

The unique unimodular measure sustained by a connected judicial graph $X$ is denoted by $\Psi(X)$. This theorem relies on our useful, newly obtained criterion for unimodularity: a measure $\mu$ sustained by a connected graph $X$ is unimodular if and only if
\[
|G_a b|\mu[X,a] = |G_b a|\mu[X,b]
\]
for all adjacent vertices $a$ and $b$ of $X$ where $G = \Aut(X)$ and $G_{(\cdot)}$ is the stabilizer subgroup. Using this criterion, we determine the structure of a unimodular measure sustained by a connected graph.

\begin{introthm}{\ref{existence_of_sustained_unimodular_measures}}
Let $X$ be a connected graph; let $[X,1] \in \Rcc(X)$; let $p$ be a positive real number. Define a function $\mu : \Rcc(X) \to \RR$ as follows: $\mu[X,1] = p$, and for all $[X,x] \in \Rcc(X) \setminus \{[X,1]\}$,
\[
\mu[X,x] = \frac{|G_{x_0}x_1| |G_{x_1}x_2| \cdots |G_{x_{k-1}}x_k|}{|G_{x_1}x_0| |G_{x_2}x_1| \cdots |G_{x_k}x_{k-1}|} p
\]
where $(x_0,x_1,\ldots,x_k)$ is a path in $X$ with $x_0 = 1$ and $x_k = x$. Then the following statements hold.

\begin{enumerate}
\item The function $\mu$ is independent of the choice of path.

\item If $\mu(\Rcc(X)) = 1$, then $\mu$ is the unique unimodular measure sustained by $X$.
\end{enumerate}
\end{introthm}

In addition, we deduce that every Cayley graph is judicial, a result known to David Aldous and Russell Lyons \cite{aldouslyons07} in the setting of group unimodularity. Furthermore, we use Theorem \ref{uniqueness_of_sustained_unimodular_measures} to show that a unimodular measure sustained by a connected graph is an extreme point of the convex set $\U$ of unimodular measures.

We also deal with disconnected graphs, and determine when they are judicial. For example, if $b$ is a positive integer, the disjoint union $bX$ of $b$ copies of a connected judicial graph $X$ is judicial. In fact, $bX$ sustains the unique unimodular measure $\Psi(X)$. In general, however, a disconnected graph sustains multiple distinct unimodular measures, as the following new observation shows.

\begin{introthm}{\ref{judiciality_of_disconnected_graphs}}
Let $I$ be a subset of $\NN$. Suppose that $X = \sum_{k \in I} b_kX^k$ where $\{X^k ~:~ k \in I\}$ is a set of pairwise distinct connected judicial graphs and $\{b_k ~:~ k \in I\}$ is a set of positive integers. Let $\mu^k = \Psi(X^k)$ for all $k \in I$. If $\mu \in \U$ is sustained by $X$, then $\mu$ is the convex combination
\[
\mu = \sum_{k \in I} \left(\sum_{x \in \Rcc(k)} \mu[X^k,x]\right) \mu^k
\]
where $x \in \Rcc(k)$ is shorthand for $[X^k,x] \in \Rcc(X^k)$.
\end{introthm}

As a partial converse, a connected component of a judicial graph whose measure is nonzero is judicial too. Theorem \ref{judiciality_of_disconnected_graphs} is actually a generalization of the same result for finite disconnected graphs described in this author's Honours project \cite{artemenko11a}.

A sequence of graphs $(G_n)$ is negligible in the sequence $(X_n)$ of finite graphs if $G_n$ is a subgraph of $X_n$ for all positive integers $n$, and $|V(G_n)|/|V(X_n)| \to 0$ as $n \to \infty$. Intuitively, the subgraphs vanish in the limit. The following is a novel result for computing weak limits of sequences of complicated graphs. It says that the weak limit is preserved if we separate the graphs into simpler components.

\begin{introthm}{\ref{weak_limits_are_invariant_under_negligence}}
Suppose that $(X_n)$ and $(G_n)$ are sequences of finite graphs such that $(G_n)$ is negligible in $(X_n)$. Then
\[
\lim_{n \to \infty} \left|\int f ~d\Psi(X_n) - \int f ~d\Psi(X_n \setminus G_n)\right| = 0
\]
for all $f \in \C(\Gr)$. Furthermore, $(\Psi(X_n))$ converges weakly to $\mu$ if and only if $(\Psi(X_n \setminus G_n))$ does too.
\end{introthm}

Not all conjectures that sound reasonable in this area of research are true. Let $X$ be an infinite connected graph, and let $x \in V(X)$. The sequence $(\Psi(B_X(x,n)))$ of laws of closed balls with linearly increasing radii need not have a weak limit. Furthermore, it is possible to construct an infinite connected graph $X$ such that, even if $x$ and $y$ are adjacent vertices of $X$, the weak limits of $(\Psi(B_X(x,n)))$ and $(\Psi(B_X(y,n)))$ are distinct. Of course, we mention the famous open problem posed by G\'abor Elek \cite{elek07,elek10}, David Aldous, and Russell Lyons \cite{aldouslyons07}:

\begin{center}
``Is every unimodular measure the weak limit of a sequence of finite graphs?''
\end{center}

\noindent Lastly, we ask whether it is possible to determine if a countable connected graph is judicial without reference to a measure, and whether every extreme point of $\U$ is a unimodular measure sustained by a connected graph.
\cleardoublepage

%
%
\section{Preliminaries}

In this thesis, we adopt several conventions regarding concepts and notation. All of our graphs are assumed to be simple, undirected, countable (finite or infinite), and have at least one vertex, unless stated otherwise. Furthermore, we do not distinguish between a graph and its isomorphism class. That is, whenever $X$ and $Y$ are graphs, $X$ and $Y$ are isomorphic if and only if $X = Y$ where the equality is between the isomorphism classes of $X$ and $Y$. In addition, dotted lines in a figure of a graph mean the graph continues indefinitely. Lastly, the reader may assume that all of our measures are probability measures. Note that the results in this section are applied often enough that we use them without reference in the rest of the thesis.

Let $X$ be a graph, and let $A \subseteq V(X)$. The \emph{subgraph of $X$ induced by $A$}\index{subgraph of $X$ induced by $A$} is a graph whose vertex set is $A$, and whose edge set is $\{\{a,b\} \in E(X) ~:~ a,b \in A\}$. Denote by $X_x$\index{Xx@$X_x$} the \emph{connected component of $X$}\index{connected component} whose vertex set contains $x$. Define the graph metric $d_X$\index{dX@$d_X$} as follows: let $d_X(x,y)$ be the length of the shortest path from $x$ to $y$ in $X$ if $X$ is connected. For every nonnegative integer $r$ and $x \in V(X)$, the \emph{(closed) ball}\index{ball} $B_X(x,r)$\index{BXxr@$B_X(x,r)$} is the subgraph of $X$ induced by the set of vertices
\[
\{y \in V(X_x) ~:~ d_{X_x}(x,y) \leq r\},
\]
and the \emph{neighbourhood}\index{neighbourhood} $N_X(x) = B_X(x,1) \setminus \{x\}$\index{NXx@$N_X(x)$} is the set of vertices that are adjacent to $x$. In each case, the subscript $X$ may be dropped if there is no confusion. A \emph{walk}\index{walk} in a graph $X$ is a sequence
\[
(x_0,x_1,\ldots,x_k)
\]
of vertices such that $x_i \in N(x_{i+1})$ for all $i \in \{0,1,\ldots,k - 1\}$; a \emph{path}\index{path} is a walk whose vertices are pairwise distinct. If $X$ is a graph, then $\Aut(X)$\index{AutX@$\Aut(X)$} is its \emph{automorphism group}\index{automorphism group}. For all $x \in V(X)$, $\Aut(X)x$\index{AutXx@$\Aut(X)x$} is the \emph{orbit of $x$}\index{orbit of $x$} under the action of $\Aut(X)$. If $X$ and $Y$ are graphs, and $k$ is a positive integer, then $X + Y$\index{XY@$X + Y$} is the disjoint union of $X$ and $Y$, and $kX$\index{kX@$kX$} is the disjoint union of $k$ copies of $X$. Furthermore, let $G$ be a subgraph of $X$. Then $X \setminus G$\index{XG@$X \setminus G$} is the subgraph of $X$ induced by $V(X) \setminus V(G)$.

%
%
\subsection{Rooted and birooted graphs}

\begin{defn}
A \emph{rooted graph}\index{rooted graph} is a pair $(X,x)$ where $X$ is a graph and $x \in V(X)$; a \emph{birooted graph}\index{birooted graph} is an ordered triple $(X,x,y)$ where $X$ is a graph, $x \in V(X)$, and $y \in N_X(x)$.
\end{defn}

Fix a positive integer $\Delta$\index{Delta@$\Delta$}, which remains the same throughout this entire thesis. Let $\Gr$\index{Gr@$\Gr$} be the set of all isomorphism classes $[X,x]$ of countable, connected rooted graphs $(X,x)$ such that $\deg_X(y) \leq \Delta$ for all $y \in V(X)$.

Define the metric $\rho : \Gr \times \Gr \to \RR$\index{rho@$\rho$} as follows:
\[
\rho([X,x],[Y,y]) =
\begin{cases}
0 & \text{ if $[X,x] = [Y,y]$,}\\
2^{-r} & \text{ otherwise}
\end{cases}
\]
where $r = \sup\{s \in \NN ~:~ [B_X(x,s),x] = [B_Y(y,s),y]\}$. It is known that $(\Gr,\rho)$ is a compact ultrametric space \cite{artemenko11a}.

Similarly, $\dGr$\index{Gr@$\dGr$} is the set of all isomorphism classes $[X,x,y]$ of countable, connected birooted graphs $(X,x,y)$ such that $\deg_X(z) \leq \Delta$ for all $z \in V(X)$. There is an analogous ultrametric $\vec\rho$\index{rho@$\vec\rho$} on $\dGr$, but it is not used in this thesis.

Furthermore, we equip $\Gr$ and $\dGr$ with the Borel $\sigma$-algebras of $\rho$ and $\vec\rho$, respectively. The reader is welcome to peruse this author's previous works \cite{artemenko11a,artemenko11b} to learn more about the compact ultrametric spaces $\Gr$ and $\dGr$.

\begin{defn}
If $X$ is a graph and $x \in V(X)$, then $[X_x,x]$ is a \emph{rooted connected component of $X$}\index{rooted connected component}. Denote by $\Rcc(X)$\index{RccX@$\Rcc(X)$} the set of rooted connected components of $X$. Similarly, $[X_x,x,y]$ is a \emph{birooted connected component of $X$}\index{birooted connected component} for all adjacent vertices $x$ and $y$ of $X$, and $\BRcc(X)$\index{BRccX@$\BRcc(X)$} is the set of birooted connected components of $X$.
\end{defn}

\begin{prop}
Let $X$ be a graph, and let $I$ be a subset of $\NN$. If $X = \sum_{k \in I} X^k$ where $X^k$ is a connected graph for all $k \in I$, then $\Rcc(X) = \bigcup_{k \in I} \Rcc(X^k)$.
\end{prop}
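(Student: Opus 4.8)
The plan is to prove the equality of sets by establishing both inclusions, the crux being that passing to a connected component commutes with the disjoint-union decomposition. Recall that $\Rcc(X) = \{[X_x,x] : x \in V(X)\}$, and note that since $X = \sum_{k \in I} X^k$ is a disjoint union, its vertex set is the disjoint union $V(X) = \bigsqcup_{k \in I} V(X^k)$ and no edge of $X$ joins vertices lying in distinct summands.

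First I would fix $x \in V(X)$ and let $k \in I$ be the unique index with $x \in V(X^k)$. The key claim is that $X_x = X^k$ as (isomorphism classes of) graphs: $X^k$ is connected and contains $x$, so $X^k$ is contained in the connected component $X_x$; conversely, any vertex reachable from $x$ by a walk in $X$ must remain in $V(X^k)$ since edges of $X$ never cross between summands, whence $V(X_x) \subseteq V(X^k)$, and the induced-subgraph structures on the two sides agree. Therefore $[X_x,x] = [X^k,x] \in \Rcc(X^k) \subseteq \bigcup_{k \in I} \Rcc(X^k)$, which gives $\Rcc(X) \subseteq \bigcup_{k \in I} \Rcc(X^k)$.

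For the reverse inclusion I would take an arbitrary element of $\bigcup_{k \in I} \Rcc(X^k)$, say $[X^k,x] \in \Rcc(X^k)$ with $x \in V(X^k)$ (using once more that $X^k$ connected forces $(X^k)_x = X^k$, so every rooted connected component of $X^k$ has this form). Since $V(X^k) \subseteq V(X)$, the vertex $x$ belongs to $X$, and by exactly the same component identification as above, $[X_x,x] = [X^k,x]$, which therefore lies in $\Rcc(X)$.

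The only step requiring genuine care — and the main obstacle, such as it is — is the claim $X_x = X^k$; everything else is bookkeeping. That claim rests entirely on the defining property of the disjoint union that there are no edges between distinct summands, which forces every walk issued from $x$ to stay inside $X^k$. Membership of $[X_x,x]$ in $\Gr$ (countability, connectedness, and the degree bound $\deg \leq \Delta$) is automatically inherited from $X$ and needs no separate argument.
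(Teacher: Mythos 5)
Your proposal is correct and follows essentially the same route as the paper: both inclusions are obtained by identifying the connected component $X_x$ with the unique summand $X^k$ containing $x$, the paper simply asserting this identification where you spell out the walk argument. No issues.
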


\begin{proof}
Suppose that $[X_x,x] \in \Rcc(X)$. Then $X_x = X^k$ for some $k \in I$. That is, $[X_x,x] = [X^k,x] \in \Rcc(X^k)$. On the other hand, assume that $[X^k,x] \in \Rcc(X^k)$ for some $k \in I$. Since $x \in V(X^k)$, $X^k = X_x$, and so $[X^k,x] = [X_x,x] \in \Rcc(X)$.
\end{proof}

\begin{prop}
Let $X$ be a connected graph, and let $[X,a,b],[X,c,d] \in \dGr$. If $[X,a,b] = [X,c,d]$, then $[X,a] = [X,c]$ and $[X,b] = [X,d]$.
\end{prop}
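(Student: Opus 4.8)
The statement asserts that the two projections $\dGr \to \Gr$ sending $[X,a,b]$ to $[X,a]$ and to $[X,b]$ respectively are well-defined on isomorphism classes. So the plan is to unwind the meaning of the hypothesis $[X,a,b] = [X,c,d]$: by definition of the isomorphism classes in $\dGr$, there is a graph isomorphism $\varphi : X \to X$ (both connected components are the graph $X$ itself since $X$ is connected) with $\varphi(a) = c$ and $\varphi(b) = d$. In other words, $\varphi \in \Aut(X)$ carries the birooted graph $(X,a,b)$ to $(X,c,d)$.

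From this single automorphism I would extract everything needed. Since $\varphi(a) = c$, the same $\varphi$ witnesses $(X,a) \cong (X,c)$ as rooted graphs, i.e.\ $[X,a] = [X,c]$ in $\Gr$. Likewise $\varphi(b) = d$ gives $(X,b) \cong (X,d)$, hence $[X,b] = [X,d]$. The only point requiring a word of care is that an isomorphism of birooted graphs is by definition a graph isomorphism preserving \emph{both} roots in order, so forgetting one of the two roots can only make the requirement weaker; thus the restriction of $\varphi$ to a map preserving a single root is still a valid isomorphism of the corresponding rooted graphs.

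There is essentially no obstacle here; the content is purely definitional. The one thing to state explicitly is why we may take the ambient graph on both sides to be the same $X$: this is because $X$ is assumed connected, so $X_a = X_c = X_b = X_d = X$, and the birooted (resp.\ rooted) isomorphism classes are taken within $\dGr$ (resp.\ $\Gr$), whose elements are classes of connected graphs. Once that is noted, the argument is a two-line application of the definition of isomorphism of birooted graphs.
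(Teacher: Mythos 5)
Your proposal is correct and follows essentially the same route as the paper: the hypothesis $[X,a,b] = [X,c,d]$ yields an automorphism $\varphi : X \to X$ with $\varphi(a) = c$ and $\varphi(b) = d$, and this single automorphism immediately witnesses $[X,a] = [X,c]$ and $[X,b] = [X,d]$. Your additional remarks about connectedness and forgetting one root only make explicit what the paper leaves implicit.
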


\begin{proof}
Suppose that $[X,a,b] = [X,c,d]$. There is an automorphism $\varphi : X \to X$ such that $\varphi(a) = c$ and $\varphi(b) = d$. Hence $[X,a] = [X,c]$ and $[X,b] = [X,d]$.
\end{proof}

The following proposition is here to justify our ability to define graphs using their rooted connected components, to claim that two graphs are equal precisely when they share at least one rooted connected component, and other technicalities.

\begin{prop}\label{equiv_of_rcc}
Let $X$ and $Y$ be connected graphs. The following statements are equivalent: (i) $X = Y$, (ii) $\Rcc(X) = \Rcc(Y)$, and (iii) $\Rcc(X) \cap \Rcc(Y) \neq \emptyset$.
\end{prop}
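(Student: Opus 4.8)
The plan is to prove the cycle of implications (i) $\Rightarrow$ (ii) $\Rightarrow$ (iii) $\Rightarrow$ (i), exploiting throughout the fact that for a connected graph $X$ every vertex lies in the single connected component $X$, so that $\Rcc(X) = \{[X,x] : x \in V(X)\}$.

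First, (i) $\Rightarrow$ (ii) is immediate from the definition of $\Rcc$: if $X$ and $Y$ are equal as isomorphism classes, the two sets $\{[X,x] : x \in V(X)\}$ and $\{[Y,y] : y \in V(Y)\}$ coincide. Next, (ii) $\Rightarrow$ (iii) uses only that every graph in this thesis has at least one vertex: $\Rcc(X)$ is nonempty, and if it equals $\Rcc(Y)$ then the intersection $\Rcc(X) \cap \Rcc(Y) = \Rcc(X)$ is nonempty as well.

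The substantive step is (iii) $\Rightarrow$ (i). I would take an element $[Z,z]$ of $\Rcc(X) \cap \Rcc(Y)$. Since $X$ is connected, $[Z,z] \in \Rcc(X)$ forces $[Z,z] = [X,x]$ for some $x \in V(X)$ with $X_x = X$; similarly $[Z,z] = [Y,y]$ for some $y \in V(Y)$ with $Y_y = Y$. Hence $[X,x] = [Y,y]$ as elements of $\Gr$, so there is an isomorphism of rooted graphs $\varphi \colon (X,x) \to (Y,y)$. Forgetting the roots, $\varphi$ is in particular a graph isomorphism $X \to Y$, and by the convention identifying a graph with its isomorphism class we conclude $X = Y$.

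There is no real obstacle here; the only point requiring care is the bookkeeping around connectedness (so that $X_x = X$) and the standing convention that equality of graphs means equality of isomorphism classes, which is exactly what lets the final isomorphism $\varphi$ upgrade to the equation $X = Y$. This proposition is precisely the bridge that licenses later arguments to specify a connected graph by exhibiting a single rooted connected component, so it is worth stating the implications in this explicit form even though each one is short.
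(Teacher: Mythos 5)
Your proposal is correct and follows essentially the same route as the paper: (i)$\Rightarrow$(ii) by transporting roots along an isomorphism, (ii)$\Rightarrow$(iii) trivially since $\Rcc(X)$ is nonempty, and (iii)$\Rightarrow$(i) by noting that a common rooted connected component $[X,x]=[Y,y]$ yields an isomorphism $X \to Y$ and hence $X = Y$ under the paper's convention of identifying graphs with their isomorphism classes.
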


\begin{proof}
To prove that $(i)$ implies $(ii)$, assume that $X = Y$. Let $[X,x] \in \Rcc(X)$. Since $X = Y$, there is an isomorphism $f : X \to Y$, and so $Y = f(X)$. Then
\[
[X,x] = [Y,f(x)] \in \Rcc(Y).
\]
That is, $\Rcc(X) \subseteq \Rcc(Y)$. Similarly, $\Rcc(Y) \subseteq \Rcc(X)$, and $(ii)$ follows. The implication $(ii)$ implies $(iii)$ is trivial. Lastly, $(iii)$ implies $(i)$ may be shown as follows. Consider a rooted connected component in the intersection of $\Rcc(X)$ and $\Rcc(Y)$: $[X,x] = [Y,y]$ for some $x \in V(X)$ and $y \in V(Y)$. Then there is an isomorphism $f : X \to Y$ such that $f(x) = y$. In particular, $X = Y$.
\end{proof}

Proposition \ref{equiv_of_rcc} only considers connected graphs. However, there are equally useful facts for graphs that may not be connected. In particular, the following results tell us how orbits are related to rooted connected components.

\begin{lem}\label{component_to_component}
Let $X$ and $Y$ be graphs. If $f : X \to Y$ is an isomorphism, then $f(X_x) = Y_{f(x)}$, and so $[X_x,x] = [Y_{f(x)},f(x)]$ for all $x \in V(X)$.
\end{lem}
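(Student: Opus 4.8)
The plan is to prove that $f$ carries the connected component $X_x$ \emph{onto} the connected component $Y_{f(x)}$ by a double-inclusion argument, with the key observation being that isomorphisms transport walks to walks. First I would record the elementary fact that if $(x_0, x_1, \ldots, x_k)$ is a walk in $X$, then $(f(x_0), f(x_1), \ldots, f(x_k))$ is a walk in $Y$; this is immediate because $f$ preserves adjacency. Since two vertices lie in the same connected component precisely when they are joined by a walk, it follows that every vertex $u$ in the component of $x$ is sent to a vertex $f(u)$ in the component of $f(x)$, i.e. $f(V(X_x)) \subseteq V(Y_{f(x)})$.

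Next I would upgrade this inclusion from vertex sets to subgraphs. The component $X_x$ is an induced subgraph of $X$, and an isomorphism sends an induced subgraph to the induced subgraph on the image vertex set, so $f(X_x)$ is exactly the subgraph of $Y$ induced by $f(V(X_x))$; moreover $f(X_x)$ is connected, being isomorphic to the connected graph $X_x$, and it contains $f(x)$. Applying the very same argument to the isomorphism $f^{-1} : Y \to X$ and the vertex $f(x)$ gives $f^{-1}(V(Y_{f(x)})) \subseteq V(X_x)$, that is, $V(Y_{f(x)}) \subseteq f(V(X_x))$. Combining the two inclusions yields $f(V(X_x)) = V(Y_{f(x)})$, and since both $f(X_x)$ and $Y_{f(x)}$ are the subgraphs of $Y$ induced by this common vertex set, we conclude $f(X_x) = Y_{f(x)}$.

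For the final assertion, the restriction of $f$ to $X_x$ is then an isomorphism from $X_x$ onto $f(X_x) = Y_{f(x)}$ sending the root $x$ to the root $f(x)$, which is exactly the statement that $[X_x, x] = [Y_{f(x)}, f(x)]$.

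I do not expect a real obstacle here; the proof is essentially routine. The only point requiring a little care is the bookkeeping that connected components are \emph{induced} subgraphs, so that the slogan ``isomorphisms map induced subgraphs to induced subgraphs'' can be invoked legitimately and the conclusion is an equality of subgraphs of $Y$, not merely of their vertex sets.
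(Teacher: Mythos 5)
Your proof is correct and follows essentially the same route as the paper: a double-inclusion argument transporting paths (walks) under $f$ and $f^{-1}$ to show $f(X_x) = Y_{f(x)}$, then restricting $f$ to get the equality of rooted isomorphism classes. The extra care you take about components being induced subgraphs is a harmless refinement of the same argument.
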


\begin{proof}
Let $x \in V(X)$. Since $f$ is an isomorphism, the image $f(X_x)$ of the connected component $X_x$ is itself connected. Let us prove that $f(X_x) = Y_{f(x)}$.

Suppose that $y \in f(X_x)$. Then $y = f(z)$ for some $z \in X_x$. Since $X_x$ is connected, there is a path $P$ between $x$ and $z$ in $X_x$. The image $f(P)$ is a path between $f(x)$ and $f(z) = y$, and so $y \in Y_{f(x)}$. Thus $f(X_x) \subseteq Y_{f(x)}$. On the other hand, assume that $y \in Y_{f(x)}$. Consider the inverse $g : Y \to X$ of the isomorphism $f$. As before, the image of a path between $y$ and $f(x)$ is a path between $g(y)$ and $g(f(x)) = x$. Thus $g(y) \in X_x$, and so $y = f(g(y)) \in f(X_x)$. That is, $Y_{f(x)} \subseteq f(X_x)$.
\end{proof}

\begin{prop}\label{same_orbit_same_rcc}
Let $X$ be a graph. Suppose that $x$ and $y$ are vertices of $X$. Then $y \in \Aut(X)x$ if and only if $[X_x,x] = [X_y,y]$.
\end{prop}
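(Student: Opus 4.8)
The plan is to prove both implications directly, using Lemma~\ref{component_to_component} for the forward direction and an explicit construction for the reverse.

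For the forward direction, suppose $y \in \Aut(X)x$, so there is an automorphism $\varphi : X \to X$ with $\varphi(x) = y$. Applying Lemma~\ref{component_to_component} to the isomorphism $\varphi$ gives $[X_x,x] = [X_{\varphi(x)},\varphi(x)] = [X_y,y]$ immediately. This part is essentially a one-line invocation of the lemma.

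For the reverse direction, suppose $[X_x,x] = [X_y,y]$. Then there is an isomorphism $f : X_x \to X_y$ with $f(x) = y$. The task is to promote $f$ to an automorphism of all of $X$, and here I would split into two cases. If $X_x = X_y$ (the same connected component), then $f$ is already an automorphism of that component, and I extend it by the identity on every other connected component of $X$ to obtain $\varphi \in \Aut(X)$ with $\varphi(x) = y$. If $X_x \neq X_y$, then these are disjoint subgraphs, and I define $\varphi$ to agree with $f$ on $X_x$, with $f^{-1}$ on $X_y$, and with the identity on all remaining components; since $f$ and $f^{-1}$ are mutually inverse isomorphisms between the two components and the remaining components are fixed pointwise, $\varphi$ is a well-defined bijection $V(X) \to V(X)$ preserving adjacency, hence an automorphism, and $\varphi(x) = f(x) = y$. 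In either case $y \in \Aut(X)x$.

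The main obstacle is purely bookkeeping in the reverse direction: one must check that the piecewise-defined map is genuinely a graph automorphism, i.e.\ that no edge of $X$ runs between distinct connected components (so the pieces do not interact) and that each piece is edge-preserving. Both points follow from the definition of connected component and the fact that $f$ is an isomorphism, so no serious difficulty arises; the only subtlety worth flagging explicitly is the case distinction $X_x = X_y$ versus $X_x \neq X_y$, since applying both $f$ and $f^{-1}$ to a single component would be incoherent.
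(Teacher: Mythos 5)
Your proposal is correct and follows essentially the same route as the paper's own proof: the forward direction via Lemma~\ref{component_to_component}, and the reverse direction by extending the isomorphism $X_x \to X_y$ to an automorphism of $X$, with the same case split between $X_x = X_y$ (extend by the identity) and $X_x \neq X_y$ (use $f$ on $X_x$, $f^{-1}$ on $X_y$, and the identity elsewhere). No gaps; your extra remarks about checking the piecewise map is an automorphism are exactly the verification the paper leaves as ``easy to see.''
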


\begin{proof}
If $y \in \Aut(X)x$, there is an automorphism $\varphi$ of $X$ such that $\varphi(x) = y$. By Lemma \ref{component_to_component}, $[X_x,x] = [\varphi(X_x),\varphi(x)] = [X_y,y]$. Conversely, $[X_x,x] = [X_y,y]$ implies there is an isomorphism $\varphi : X_x \to X_y$ such that $\varphi(x) = y$. If $x$ and $y$ lie in the same component, then $X_x = X_y$, and $\hat\varphi : X \to X$ defined by
\[
\hat\varphi(z) =
\begin{cases}
\varphi(z)      & \text{ if $z \in V(X_x)$,}\\
z               & \text{ otherwise}
\end{cases}
\]
for all $z \in V(X)$ is an extension of $\varphi$. On the other hand, assume that $x$ and $y$ belong to distinct components. It is possible to define an extension $\hat\varphi$ on $X$ as follows:
\[
\hat\varphi(z) =
\begin{cases}
\varphi(z)      & \text{ if $z \in V(X_x)$,}\\
\varphi^{-1}(z) & \text{ if $z \in V(X_y)$,}\\
z               & \text{ otherwise}
\end{cases}
\]
for all $z \in V(X)$. In either case, it is easy to see that $\hat\varphi$ is an automorphism of $X$, and $\hat\varphi(x) = y$. That is, $y \in \Aut(X)x$.
\end{proof}

%
%
\subsection{Sustained and unimodular measures}

\begin{defn}\cite{artemenko11a}
A measure $\mu$ on $\Gr$ is \emph{sustained}\index{sustained} by a graph $X$ if $\Rcc(X)$ has full measure, that is, $\mu(\Gr \setminus \Rcc(X)) = 0$. Alternatively, $X$ \emph{sustains}\index{sustains} $\mu$. It is \emph{strictly sustained}\index{strictly sustained} by $X$ if every rooted connected component has a positive measure.
\end{defn}

\begin{prop}
If $\mu$ and $\nu$ are measures sustained by a graph $X$, then $\mu = \nu$ if and only if $\mu[X_x,x] = \nu[X_x,x]$ for all $[X_x,x] \in \Rcc(X)$.
\end{prop}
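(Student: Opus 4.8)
The plan is to prove both directions of the biconditional, with the forward direction being essentially trivial and the reverse direction requiring a standard measure-theoretic uniqueness argument. For the forward direction, if $\mu = \nu$ as measures on $\Gr$, then in particular they agree on every Borel set, and since each singleton $\{[X_x,x]\}$ is closed (hence Borel) in the ultrametric space $\Gr$, we get $\mu[X_x,x] = \nu[X_x,x]$ for every $[X_x,x] \in \Rcc(X)$.

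For the reverse direction, assume $\mu[X_x,x] = \nu[X_x,x]$ for all $[X_x,x] \in \Rcc(X)$. The key observation is that $\Rcc(X)$ is a countable set: since $X$ is a countable graph, $V(X)$ is countable, and the map $x \mapsto [X_x,x]$ from $V(X)$ onto $\Rcc(X)$ shows that $\Rcc(X)$ is countable. Because $\mu$ and $\nu$ are both sustained by $X$, the complement $\Gr \setminus \Rcc(X)$ is $\mu$-null and $\nu$-null. Now for an arbitrary Borel set $A \subseteq \Gr$, write $A = (A \cap \Rcc(X)) \cup (A \setminus \Rcc(X))$; the second piece has measure zero under both $\mu$ and $\nu$, while the first piece is a countable subset of $\Rcc(X)$, so by countable additivity
\[
\mu(A) = \mu(A \cap \Rcc(X)) = \sum_{[X_x,x] \in A \cap \Rcc(X)} \mu[X_x,x] = \sum_{[X_x,x] \in A \cap \Rcc(X)} \nu[X_x,x] = \nu(A).
\]
Hence $\mu = \nu$.

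I do not anticipate a genuine obstacle here; the only point requiring a little care is making sure every singleton in $\Gr$ is measurable (which follows from $\Gr$ being a metric space equipped with its Borel $\sigma$-algebra, so closed sets are Borel) and that $\Rcc(X)$ is countable (which follows from countability of $V(X)$). Everything else is bookkeeping with countable additivity and the definition of being sustained.
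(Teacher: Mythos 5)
Your argument is correct and follows essentially the same route as the paper: both rely on the fact that sustained measures vanish off $\Rcc(X)$, so agreement on the (countably many) singletons of $\Rcc(X)$ forces equality. Your version simply spells out the countable-additivity bookkeeping that the paper's proof leaves implicit.
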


\begin{proof}
Since $\mu$ and $\nu$ are sustained by $X$, the $\mu$-measure and $\nu$-measure of $\Gr \setminus \Rcc(X)$ is zero. Thus $\mu[Y,y] = 0 = \nu[Y,y]$ for all $[Y,y] \notin \Rcc(X)$. Since $\mu$ and $\nu$ always agree on the complement of $\Rcc(X)$, the result follows.
\end{proof}

The main concept of this thesis is \emph{unimodularity}\index{unimodularity}. Since its introduction by Benjamini and Schramm \cite{benjaminischramm01}, and Aldous and Steele \cite{aldoussteele03}, it has been reformulated and renamed in several papers. Our attempt is also known as \emph{involution invariance}\index{involution invariance} according to Aldous and Lyons \cite[p.~10]{aldouslyons07}. However, the paper by Aldous and Steele \cite[p.~40]{aldoussteele03} provides an alternative, yet equivalent, definition with this terminology. Another variation is the \emph{intrinsic mass transport principle}\index{intrinsic mass transport principle}, which is shown to coincide with our definition \cite[p.~11]{aldouslyons07}.

\begin{defn}
A measure $\mu$ on $\Gr$ is \emph{unimodular}\index{unimodular} if
\[
\int \sum_{y \in N(x)} f[X_x,x,y] ~d\mu[X_x,x] = \int \sum_{y \in N(x)} f[X_x,y,x] ~d\mu[X_x,x]
\]
for all nonnegative measurable functions $f$ on $\dGr$. The set of unimodular measures on $\Gr$ is denoted by $\U$\index{U@$\U$}.
\end{defn}

To better understand the concept of unimodularity, think of the integral of a function $g$ with respect to the measure $\mu$ as the average value of $g$. The function value $f[X_x,x,y]$ is the amount of ``mass'' being sent from $x$ to $y$. Then $\mu$ is unimodular if the average amount of mass sent \emph{from} a vertex to its neighbours equals the average amount of mass sent \emph{to} a vertex from its neighbours.

Since every graph in this thesis is countable, and the measures we deal with are sustained by such graphs, it is wise to just use the following reformulation instead.

\begin{prop}
Let $X$ be a graph. A measure $\mu$ sustained by $X$ is unimodular if and only if
\[
\sum_{[X_x,x] \in \Rcc(X)} \sum_{y \in N(x)} f[X_x,x,y] \cdot \mu[X_x,x] = \sum_{[X_x,x] \in \Rcc(X)} \sum_{y \in N(x)} f[X_x,y,x] \cdot \mu[X_x,x] \tag{$\star$}
\]
for all nonnegative functions $f$ on $\dGr$ with $f(\dGr \setminus \BRcc(X)) = \{0\}$.
\end{prop}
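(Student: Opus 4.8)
The plan is to show both directions of the equivalence by exploiting the fact that $\mu$ is a probability measure concentrated on the countable set $\Rcc(X) \subseteq \Gr$, and relating integrals against $\mu$ to sums over $\Rcc(X)$. The starting observation is that for any nonnegative measurable function $g$ on $\Gr$, since $\mu(\Gr \setminus \Rcc(X)) = 0$ and $\Rcc(X)$ is countable, we have $\int g \, d\mu = \sum_{[X_x,x] \in \Rcc(X)} g[X_x,x] \cdot \mu[X_x,x]$; this is the standard fact that integration against a purely atomic measure reduces to a weighted sum over the atoms, and it applies here because each singleton $\{[X_x,x]\}$ is measurable in the ultrametric Borel $\sigma$-algebra. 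Applying this with $g[X_x,x] = \sum_{y \in N(x)} f[X_x,x,y]$ on the left and $g[X_x,x] = \sum_{y \in N(x)} f[X_x,y,x]$ on the right converts the defining equation of unimodularity into $(\star)$ — at least when $f$ is measurable and nonnegative on all of $\dGr$.

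First I would handle the easy direction: if $\mu$ is unimodular, then the defining equation holds for every nonnegative measurable $f$ on $\dGr$, in particular for every nonnegative $f$ vanishing off $\BRcc(X)$ (such an $f$ is automatically measurable, or we can simply restrict to the measurable ones and note that on $\Rcc(X)$ only values on $\BRcc(X)$ are ever evaluated). Converting via the atomic-integral identity above yields $(\star)$. For the converse, suppose $(\star)$ holds for all nonnegative $f$ vanishing off $\BRcc(X)$, and let $h$ be an arbitrary nonnegative measurable function on $\dGr$. The key move is to replace $h$ by its ``restriction'' $f = h \cdot \mathbf{1}_{\BRcc(X)}$: this $f$ is nonnegative and vanishes off $\BRcc(X)$, so $(\star)$ applies to it. Then I would argue that for each $[X_x,x] \in \Rcc(X)$ and each $y \in N(x)$, the birooted graph $[X_x,x,y]$ lies in $\BRcc(X)$ — indeed $x$ and $y$ are adjacent vertices of $X$, so $[X_x,x,y] \in \BRcc(X)$ by definition — and likewise $[X_x,y,x] \in \BRcc(X)$. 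Hence $f[X_x,x,y] = h[X_x,x,y]$ and $f[X_x,y,x] = h[X_x,y,x]$ for every term appearing in the sums, so $(\star)$ for $f$ is literally the same statement as $(\star)$ for $h$; converting back through the atomic-integral identity recovers the defining equation of unimodularity for $h$. Since $h$ was arbitrary, $\mu$ is unimodular.

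The main obstacle, and the only place requiring genuine care, is the measurability and interchange-of-sum-and-integral bookkeeping: one must check that $[X_x,x] \mapsto \sum_{y \in N(x)} f[X_x,x,y]$ is a well-defined measurable function on $\Gr$ (it is a finite sum since $\deg_X(x) \le \Delta$, and each summand is measurable because the map $[X_x,x] \mapsto [X_x,x,y]$ assembled over the at most $\Delta$ neighbours can be realized measurably on the ultrametric space), and that $\mathbf{1}_{\BRcc(X)}$ is measurable on $\dGr$ (true since $\BRcc(X)$ is countable, hence Borel). I would also note the harmless subtlety that $(\star)$ as stated quantifies over all nonnegative functions $f$ vanishing off $\BRcc(X)$ without a measurability hypothesis, which is fine because any such $f$ differs from a measurable one only on a $\mu$-null portion of the relevant evaluations — or, more cleanly, because the sums in $(\star)$ only ever see the countably many values of $f$ on $\BRcc(X)$, so measurability is moot on that side. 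With these points dispatched, the equivalence follows by the symmetric substitution argument sketched above.
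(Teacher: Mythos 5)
Your proposal is correct and follows essentially the same route as the paper's proof: reduce the unimodularity integrals to sums over the countable set $\Rcc(X)$, note that a nonnegative function vanishing off the countable set $\BRcc(X)$ is automatically measurable for the forward direction, and for the converse multiply an arbitrary nonnegative measurable function by the characteristic function of $\BRcc(X)$ and observe that the two functions agree on every term appearing in the sums. No gaps; the measurability bookkeeping you flag is exactly the point the paper also dispatches via countability of $\BRcc(X)$.
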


\begin{proof}
Let $\mu$ be a measure sustained by a graph $X$. Clearly, $\mu$ is unimodular if and only if ($\star$) holds for all nonnegative measurable functions $f$ on $\dGr$ because $\mu$ is sustained by $X$. Suppose that $\mu$ is unimodular. Let $f$ be a nonnegative function on $\dGr$ with $f(\dGr \setminus \BRcc(X)) = \{0\}$. Since $X$ is countable, so is $\BRcc(X)$. It follows that $f$ is measurable, and the result holds. Conversely, assume that $f$ is a nonnegative measurable function on $\dGr$. Let $g$ be the product of $f$ and the characteristic function of $\BRcc(X)$. It is obvious that $g$ is a nonnegative function on $\dGr$ and $g(\dGr \setminus \BRcc(X)) = \{0\}$, which means $g$ satisfies ($\star$). Then
\begin{align*}
\sum_{[X_x,x] \in \Rcc(X)} \sum_{y \in N(x)} f[X_x,x,y] \cdot \mu[X_x,x] &= \sum_{[X_x,x] \in \Rcc(X)} \sum_{y \in N(x)} g[X_x,x,y] \cdot \mu[X_x,x]\\
        &= \sum_{[X_x,x] \in \Rcc(X)} \sum_{y \in N(x)} g[X_x,y,x] \cdot \mu[X_x,x]\\
        &= \sum_{[X_x,x] \in \Rcc(X)} \sum_{y \in N(x)} f[X_x,y,x] \cdot \mu[X_x,x]
\end{align*}
because $f = g$ on $\BRcc(X)$, and so $\mu$ is unimodular.
\end{proof}

\begin{rem}
In arguments involving unimodularity and a measure $\mu$ sustained by a graph $X$, it is usually assumed, without loss of generality, that $f(\dGr \setminus \BRcc(X)) = \{0\}$ whenever $f$ is a nonnegative function on $\dGr$.
\end{rem}

To combine the concepts of ``sustained'' and ``unimodular,'' we introduce new terminology to distinguish graphs that sustain unimodular measures.

\begin{defn}
A graph $X$ is \emph{judicial}\index{judicial} if there is a unimodular measure $\mu$ sustained by $X$. If a graph is not judicial, it is said to be \emph{lawless}\index{lawless}.
\end{defn}

\begin{defn}\cite{schramm08}
Let $X$ be a finite graph. Define the measure $\Psi(X)$\index{PsiX@$\Psi(X)$!law of a finite graph} on $\Gr$ by
\[
\Psi(X)[X_x,x] = \frac{|\Aut(X)x|}{|V(X)|}
\]
for all $[X_x,x] \in \Rcc(X)$, and $\Psi(X) = 0$ otherwise. The measure $\Psi(X)$ is clearly sustained by $X$, and it is known as the \emph{law of $X$}\index{law of $X$}.
\end{defn}

\begin{prop}\label{integral_wrt_law}
If $X$ is a finite graph, then
\[
\int g ~d\Psi(X) = \sum_{x \in V(X)} \frac{g[X_x,x]}{|V(X)|}
\]
for all measurable functions $g : \Gr \to \RR$.
\end{prop}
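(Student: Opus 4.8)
The plan is to unwind the definition of the integral against the discrete measure $\Psi(X)$ and then re-index the resulting finite sum by the orbits of $\Aut(X)$. First I would note that, since $X$ is finite, $\Rcc(X)$ is a finite (hence Borel) subset of $\Gr$ on which $\Psi(X)$ is concentrated, so for any measurable $g : \Gr \to \RR$ the integral collapses to a finite sum:
\[
\int g ~d\Psi(X) = \sum_{[X_x,x] \in \Rcc(X)} g[X_x,x] \cdot \Psi(X)[X_x,x] = \sum_{[X_x,x] \in \Rcc(X)} g[X_x,x] \cdot \frac{|\Aut(X)x|}{|V(X)|}.
\]
Measurability of $g$ is used only to make the integral meaningful; the point-mass evaluation is justified because singletons in the ultrametric space $\Gr$ are closed.

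Next I would rewrite the right-hand side $\sum_{x \in V(X)} g[X_x,x]/|V(X)|$ by partitioning $V(X)$ into the orbits of $\Aut(X)$. The map $x \mapsto [X_x,x]$ from $V(X)$ to $\Rcc(X)$ is surjective by definition of $\Rcc(X)$, and by Proposition \ref{same_orbit_same_rcc} two vertices $x,y$ satisfy $[X_x,x] = [X_y,y]$ if and only if $y \in \Aut(X)x$; thus the fibres of this map are exactly the orbits. Grouping the sum over $V(X)$ according to which rooted connected component a vertex yields, the summand $g[X_x,x]/|V(X)|$ appears once for each vertex of the orbit $\Aut(X)x$, that is, $|\Aut(X)x|$ times. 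Hence
\[
\sum_{x \in V(X)} \frac{g[X_x,x]}{|V(X)|} = \sum_{[X_x,x] \in \Rcc(X)} |\Aut(X)x| \cdot \frac{g[X_x,x]}{|V(X)|},
\]
which coincides with the expression obtained for $\int g ~d\Psi(X)$, completing the proof.

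The argument is essentially bookkeeping, so I do not anticipate a genuine obstacle; the only step requiring care is the orbit–fibre correspondence for $x \mapsto [X_x,x]$, and that is exactly what Proposition \ref{same_orbit_same_rcc} provides. I would simply make sure to invoke the finiteness of $X$ explicitly when reducing the integral to a finite sum, and to state the re-indexing over orbits cleanly.
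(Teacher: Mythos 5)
Your proposal is correct and follows essentially the same route as the paper: both reduce the integral to a finite sum over $\Rcc(X)$ using that $\Psi(X)$ is a discrete measure concentrated there, and both re-index the sum over $V(X)$ by the $\Aut(X)$-orbits via Proposition \ref{same_orbit_same_rcc}. The only difference is cosmetic: you spell out the integral-to-sum reduction explicitly, while the paper leaves it implicit.
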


\begin{proof}
Let $g : \Gr \to \RR$ be a measurable function. By Proposition \ref{same_orbit_same_rcc},
\[
\{\Aut(X)x ~:~ [X_x,x] \in \Rcc(X)\}
\]
is a partition of $V(X)$. Then
\[
\sum_{x \in V(X)} \frac{g[X_x,x]}{|V(X)|} = \sum_{[X_x,x] \in \Rcc(X)} \sum_{y \in \Aut(X)x} \frac{g[X_y,y]}{|V(X)|} = \sum_{[X_x,x] \in \Rcc(X)} g[X_x,x] \cdot \frac{|\Aut(X)x|}{|V(X)|}
\]
where the second equality holds by Proposition \ref{same_orbit_same_rcc}, and the result follows.
\end{proof}

\begin{prop}\cite{schramm08}
The law $\Psi(X)$ of a finite graph $X$ is unimodular. In particular, every finite graph is judicial.
\end{prop}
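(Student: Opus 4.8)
The plan is to verify the reformulated condition ($\star$) for the probability measure $\Psi(X)$, which is already known to be sustained by $X$. Fix a finite graph $X$ and, by the remark following the proposition on ($\star$), a nonnegative function $f$ on $\dGr$ with $f(\dGr \setminus \BRcc(X)) = \{0\}$. First I would recognize the left-hand side of ($\star$) as $\int g \, d\Psi(X)$, where $g[Y,y] = \sum_{z \in N(y)} f[Y,y,z]$. This $g$ is a genuine function on $\Gr$: an isomorphism $X_x \to X_x$ fixing $x$ permutes $N(x)$, so the value is independent of the chosen representative, and $g$ is supported on the finite set $\Rcc(X)$, hence measurable. Proposition~\ref{integral_wrt_law} then rewrites the left-hand side as $\tfrac{1}{|V(X)|}\sum_{x \in V(X)} \sum_{y \in N(x)} f[X_x,x,y]$, and the same reasoning applied to $g'[Y,y] = \sum_{z \in N(y)} f[Y,z,y]$ rewrites the right-hand side as $\tfrac{1}{|V(X)|}\sum_{x \in V(X)} \sum_{y \in N(x)} f[X_x,y,x]$.

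Next I would match the two double sums. Both are indexed by the finite set $P$ of ordered pairs $(x,y)$ with $x \in V(X)$ and $y \in N(x)$, i.e. ordered pairs of adjacent vertices. Since adjacent vertices lie in a common connected component, $X_x = X_y$ for every $(x,y) \in P$, so $f[X_x,y,x] = f[X_y,y,x]$. The map $(x,y) \mapsto (y,x)$ is an involution of $P$, and it carries the summand $f[X_y,y,x]$ attached to $(x,y)$ to the summand $f[X_x,x,y]$ attached to $(y,x)$; hence $\sum_{(x,y) \in P} f[X_x,y,x] = \sum_{(x,y) \in P} f[X_x,x,y]$. Dividing by $|V(X)|$ gives ($\star$), so $\Psi(X)$ is unimodular. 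Since $\Psi(X)$ is a probability measure sustained by $X$, the graph $X$ is judicial.

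This argument is essentially bookkeeping, and I do not anticipate a real obstacle. The only points that need a little care are checking that $g$ and $g'$ are well-defined measurable functions on $\Gr$ so that Proposition~\ref{integral_wrt_law} applies, and making sure the passage from the $\Rcc(X)$-indexed sum weighted by $|\Aut(X)x|$ to the $V(X)$-indexed sum is exactly the orbit-partition identity used in the proof of that proposition (via Proposition~\ref{same_orbit_same_rcc}). Once those are in place, the involution $(x,y)\mapsto(y,x)$ on ordered adjacent pairs does all the work.
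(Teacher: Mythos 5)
Your proposal is correct and follows essentially the same route as the paper: rewrite both sides of ($\star$) as vertex-indexed sums via Proposition~\ref{integral_wrt_law}, then equate them using the symmetry of the set of ordered adjacent pairs (your involution $(x,y)\mapsto(y,x)$ is exactly the paper's relabelling over its symmetric set $S$). The extra care you take in checking that $g$ and $g'$ are well defined on isomorphism classes is a harmless refinement, not a different argument.
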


\begin{proof}
Let $X$ be a finite graph, and let $f$ be a nonnegative function on $\dGr$. Clearly, the set
\[
S = \{(a,b) \in V(X) \times V(X) ~:~ b \in N(a)\}
\]
is symmetric. That is, $(a,b) \in S$ if and only if $(b,a) \in S$ for all vertices $a$ and $b$ of $X$. In addition, $a \in V(X)$ and $b \in N(a)$ if and only if $(a,b) \in S$. Furthermore,
\[
\sum_{(x,y) \in S} f[X_x,x,y] = \sum_{(y,x) \in S} f[X_x,x,y] = \sum_{(x,y) \in S} f[X_y,y,x] = \sum_{(x,y) \in S} f[X_x,y,x] \tag{$\star$}
\]
where $X_x = X_y$ because $x$ and $y$ are neighbours, and the second equality is obtained by relabelling $x$ and $y$. Using Proposition \ref{integral_wrt_law},
\[
\sum_{[X_x,x] \in \Rcc(X)} \sum_{y \in N(x)} f[X_x,x,y] \cdot \Psi(X)[X_x,x] = \sum_{x \in V(X)} \sum_{y \in N(x)} \frac{f[X_x,x,y]}{|V(X)|}
\]
and
\[
\sum_{[X_x,x] \in \Rcc(X)} \sum_{y \in N(x)} f[X_x,y,x] \cdot \Psi(X)[X_x,x] = \sum_{x \in V(X)} \sum_{y \in N(x)} \frac{f[X_x,y,x]}{|V(X)|}.
\]
According to ($\star$), the right-hand sides of these two equations are equal, which means $\Psi(X)$ is unimodular. Hence $X$ is judicial.
\end{proof}

%
%
\subsection{Weak convergence}

Let $\C(\Gr)$ be the set of bounded continuous real-valued functions on $\Gr$. As defined in measure theory, weak convergence plays a prominent role in this thesis. Its definition is restricted to measures on $\Gr$ for simplicity.

\begin{defn}
A sequence $(\mu_n)$ of measures on $\Gr$ \emph{converges weakly}\index{converges weakly} to some measure $\mu$ on $\Gr$ if
\[
\int f ~d\mu_n \to \int f ~d\mu
\]
for all $f \in \C(\Gr)$. The measure $\mu$ is known as the \emph{weak limit}\index{weak limit} of the given sequence.

As a special case, assume that $(\Psi(X_n))$ is a sequence of laws of finite graphs. The sequence $(X_n)$ of finite graphs \emph{converges weakly} if $(\Psi(X_n))$ does too.
\end{defn}

The proof of the following important fact, which is due to Benjamini and Schramm \cite{benjaminischramm01}, is omitted in this thesis. However, the reader is encouraged to see this author's paper on graphings and unimodularity for a full treatment \cite{artemenko11b}. As stated by Schramm \cite{schramm08}, Aldous, and Lyons \cite{aldouslyons07}, the converse is still an open question.

\begin{prop}
If $\mu$ is the weak limit of a sequence $(\Psi(X_n))$ where $X_n$ is a finite graph for each positive integer $n$, then $\mu$ is unimodular.\qed
\end{prop}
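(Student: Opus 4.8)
The statement to prove is that the weak limit of a sequence $(\Psi(X_n))$ of laws of finite graphs is unimodular. The plan is to pass the unimodularity equation to the limit. The crucial observation is that unimodularity, as formulated in the displayed equation ($\star$) for arbitrary nonnegative (not necessarily measurable) $f$ on $\dGr$, is equivalent to the integral form
\[
\int \sum_{y \in N(x)} f[X_x,x,y] ~d\mu[X_x,x] = \int \sum_{y \in N(x)} f[X_x,y,x] ~d\mu[X_x,x],
\]
and each finite graph law $\Psi(X_n)$ satisfies this. The natural idea is: for a nice test function $f$ on $\dGr$, define $F^+, F^- : \Gr \to \RR$ by $F^+[X,x] = \sum_{y \in N(x)} f[X_x,x,y]$ and $F^-[X,x] = \sum_{y \in N(x)} f[X_x,y,x]$; then unimodularity of $\Psi(X_n)$ says $\int F^+ ~d\Psi(X_n) = \int F^- ~d\Psi(X_n)$, and one wants to let $n \to \infty$ using weak convergence.

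**Key steps.** First I would reduce to a convenient class of test functions $f$ on $\dGr$: it suffices to verify ($\star$) for $\mu$ when $f$ ranges over indicator functions of balls in the ultrametric $\vec\rho$ — equivalently, functions $f$ depending only on the isomorphism class of the ball $[B_X(x,r),x,y]$ for some fixed radius $r$ — since nonnegative measurable functions are monotone limits of nonnegative simple functions supported on such balls, and the general ($\star$) follows by approximation and the monotone convergence theorem. Second, for such an $f$, I would show that $F^+$ and $F^-$ as defined above are continuous and bounded on $\Gr$: boundedness is immediate since $\deg \leq \Delta$ gives $|F^\pm| \leq \Delta \sup |f|$, and continuity follows because whether two rooted graphs have the same ball of radius $r+1$ at the root determines the list of birooted balls of radius $r$ at the root and each neighbour, so $F^\pm$ is locally constant, hence continuous — so $F^\pm \in \C(\Gr)$. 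Third, by weak convergence, $\int F^+ ~d\Psi(X_n) \to \int F^+ ~d\mu$ and likewise for $F^-$; since $\int F^+ ~d\Psi(X_n) = \int F^- ~d\Psi(X_n)$ for every $n$ (each $\Psi(X_n)$ is unimodular by the previous proposition), the limits agree, giving $\int F^+ ~d\mu = \int F^- ~d\mu$, which is ($\star$) for this $f$. Finally, I would remove the restriction on $f$ by the approximation argument sketched above and conclude $\mu \in \U$.

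**Main obstacle.** The delicate point is the continuity of $F^+$ and $F^-$ on $\Gr$, and more precisely making sure the test functions $f$ one uses are genuinely "continuous enough" that $F^\pm$ land in $\C(\Gr)$; one must be careful that $f$ is taken to depend only on a finite-radius ball, so that summing over the neighbours of the root yields a function on $\Gr$ that factors through the finite data $[B_X(x,r+1),x]$. A secondary technical issue is justifying the exchange of limit and the (monotone) approximation of a general nonnegative measurable $f$ by such finitely-supported locally constant functions; this is a routine measure-theoretic argument using that $\dGr$ is a compact ultrametric space with a countable basis of clopen balls, together with the monotone convergence theorem, but it should be spelled out. Everything else — boundedness, the degree bound $\Delta$, and the appeal to the already-established unimodularity of $\Psi(X_n)$ — is straightforward.
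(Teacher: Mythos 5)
The paper does not actually prove this proposition: it is stated with the proof omitted, deferring to Benjamini--Schramm and to the author's earlier paper on graphings, so there is no internal argument to compare yours against. Your route is the standard one and it works: for a test function $f$ on $\dGr$ depending only on the class of $[B_X(x,r),x,y]$, both mass-transport sums $F^{\pm}$ are determined by the rooted ball $[B_X(x,r+1),x]$ (for $F^-$ this uses that $B_X(y,r)\subseteq B_X(x,r+1)$ with distances from $y$ unchanged in the induced subgraph), so $F^{\pm}$ are locally constant, bounded by $\Delta\sup|f|$, hence in $\C(\Gr)$; the identity $\int F^+\,d\Psi(X_n)=\int F^-\,d\Psi(X_n)$ then passes to the weak limit. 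The one place where your sketch is imprecise is the final extension step: a general nonnegative measurable $f$ is an increasing limit of simple functions built from \emph{arbitrary} Borel sets, not from clopen balls, so ``monotone limits of simple functions supported on such balls'' is not literally the mechanism. The clean fix is a Dynkin $\pi$--$\lambda$ (or functional monotone class) argument: the clopen balls of $\dGr$ form a $\pi$-system generating the Borel $\sigma$-algebra, and the collection of Borel sets whose indicators satisfy the transport identity for $\mu$ is a $\lambda$-system --- closure under differences is legitimate precisely because the degree bound $\Delta$ makes every relevant integral finite (at most $\Delta$) --- so all Borel indicators satisfy it, then simple functions by linearity, then all nonnegative measurable $f$ by monotone convergence on both sides. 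With that step spelled out, your proof is complete.
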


%
%
\subsection{Examples of graphs}

\begin{defn}
A graph $X$ is \emph{vertex-transitive}\index{vertex-transitive} if $V(X) = \Aut(X)x$ for some $x \in V(X)$; a graph $X$ is \emph{rigid}\index{rigid} if its automorphism group $\Aut(X)$ is trivial.
\end{defn}

\begin{prop}
Let $X$ be a connected graph. If $[X,a,b] = [X,b,a]$ for all $a \in V(X)$ and $b \in N(a)$, then $X$ is vertex-transitive.
\end{prop}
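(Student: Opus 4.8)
The plan is to show that any two vertices $a, b \in V(X)$ lie in the same orbit by "walking" from $a$ to $b$ along a path and using the hypothesis to flip roots one edge at a time. First I would observe that the hypothesis $[X,a,b] = [X,b,a]$ for every edge $\{a,b\}$ says precisely that for each edge there is an automorphism of $X$ exchanging its two endpoints; in particular, adjacent vertices always lie in the same orbit, i.e. $b \in \Aut(X)a$ whenever $b \in N(a)$.

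Next, since $X$ is connected, given arbitrary $a, b \in V(X)$ there is a path $(x_0, x_1, \ldots, x_k)$ with $x_0 = a$ and $x_k = b$. I would prove $x_i \in \Aut(X)x_0$ for all $i$ by induction on $i$: the base case $i = 0$ is trivial; for the inductive step, $x_{i+1} \in N(x_i)$, so by the previous paragraph there is an automorphism $\varphi$ with $\varphi(x_i) = x_{i+1}$, hence $x_{i+1} \in \Aut(X)x_i = \Aut(X)x_0$ using the inductive hypothesis and the fact that orbits partition $V(X)$ (an orbit is either equal to or disjoint from another, so $x_i \in \Aut(X)x_0$ forces $\Aut(X)x_i = \Aut(X)x_0$). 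Taking $i = k$ gives $b \in \Aut(X)a$.

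Finally, fixing any $x \in V(X)$, the above shows every vertex of $X$ lies in $\Aut(X)x$, so $V(X) = \Aut(X)x$ and $X$ is vertex-transitive by definition.

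I do not expect any serious obstacle here; the only point requiring a little care is the bookkeeping that "same orbit" is transitive, which is just the standard fact that the orbits of a group action form a partition. One could alternatively phrase the induction directly in terms of composing the edge-flipping automorphisms $\varphi_1, \ldots, \varphi_k$ (where $\varphi_i$ sends $x_{i-1}$ to $x_i$) to obtain a single automorphism carrying $a$ to $b$, which avoids even mentioning orbits as sets; either formulation is routine.
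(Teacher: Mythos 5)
Your proposal is correct and is essentially the paper's argument: the paper likewise takes a path $(x_0,\ldots,x_k)$ from $a$ to $b$, extracts from each equality $[X,x_i,x_{i+1}]=[X,x_{i+1},x_i]$ an automorphism sending $x_i$ to $x_{i+1}$, and composes them to carry $a$ to $b$ (the composition variant you mention at the end). Your orbit-and-induction phrasing is just a repackaging of the same step, so there is nothing substantive to add.
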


\begin{proof}
Let $x,y \in V(X)$. Since $X$ is connected, there is a path $(x_0,x_1,\ldots,x_k)$ in $X$ with $x_0 = x$ and $x_k = y$. By assumption,
\[
[X,x_i,x_{i+1}] = [X,x_{i+1},x_i]
\]
for all $i \in \{0,1,\ldots,k - 1\}$, which means there are automorphisms $\varphi_1,\varphi_2,\ldots,\varphi_k$ of $X$ such that $\varphi_j(x_{j-1}) = x_j$ for all $j \in \{1,2,\ldots,k\}$. Hence $\varphi = \varphi_k \circ \varphi_{k-1} \circ \cdots \circ \varphi_1$ is an automorphism of $X$ such that $\varphi(x) = y$.
\end{proof}

There are several important examples of finite and infinite graphs mentioned in this thesis. Each of these graphs is described below, and the proposition that follows gives a taste of their usefulness.

\begin{figure}[ht]
\begin{center}
\includegraphics[scale=1.2]{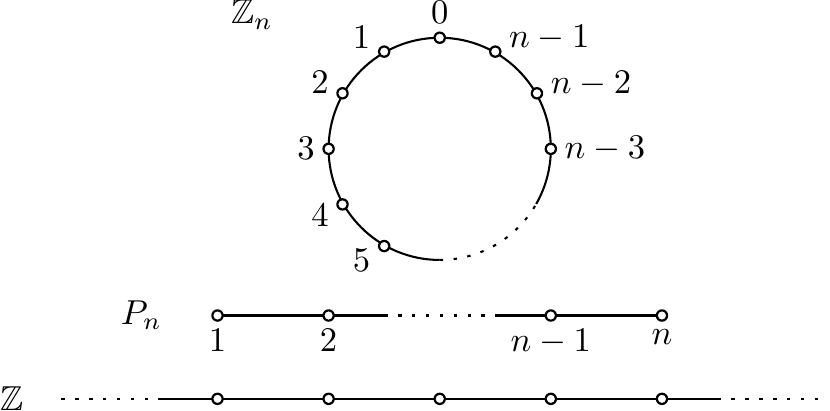}
\caption[The graphs $\ZZ_n$, $P_n$, and $\ZZ$]{The graphs $\ZZ_n$, $P_n$, and $\ZZ$.}
\label{cycles_and_paths}
\end{center}
\end{figure}

\begin{eg}
The graph $\ZZ_n$\index{Zn@$\ZZ_n$} is the \emph{cycle on $n$ vertices}, and $P_n$\index{Pn@$P_n$} is the \emph{path on $n$ vertices}. Additionally, $\ZZ$\index{Z@$\ZZ$} is known as the \emph{bi-infinite path}\index{bi-infinite path}; its vertices are the integers, and its edge set consists of the elements $\{i,i+1\}$ for all $i \in \ZZ$. These three types of graphs are presented in Figure \ref{cycles_and_paths}.
\end{eg}

\begin{eg}
The graph $T$\index{T@$T$} in Figure \ref{3_regular_tree} is the \emph{$3$-regular tree}\index{3regulartree@$3$-regular tree}, which is an infinite vertex-transitive tree each of whose vertices has degree $3$. The graph $\Lambda$\index{Lambda@$\Lambda$} in Figure \ref{infinite_perfect_binary_tree} is the \emph{infinite perfect binary tree}\index{infinite perfect binary tree}. The graph\footnote{The reader should be aware that $\bar\Lambda$ is \emph{not} a tree because it is not acyclic. The terminology is used to establish a link with the infinite perfect binary tree.} $\bar\Lambda$\index{Lambda@$\bar\Lambda$} in Figure \ref{barred_binary_tree} is the \emph{barred (infinite perfect) binary tree}\index{barred binary tree}. The \emph{first ancestor}\index{first ancestor} of $\Lambda$ (and $\bar\Lambda$) is the unique vertex of degree $2$.
\end{eg}

\begin{figure}[ht]
\begin{center}
\includegraphics[scale=1.2]{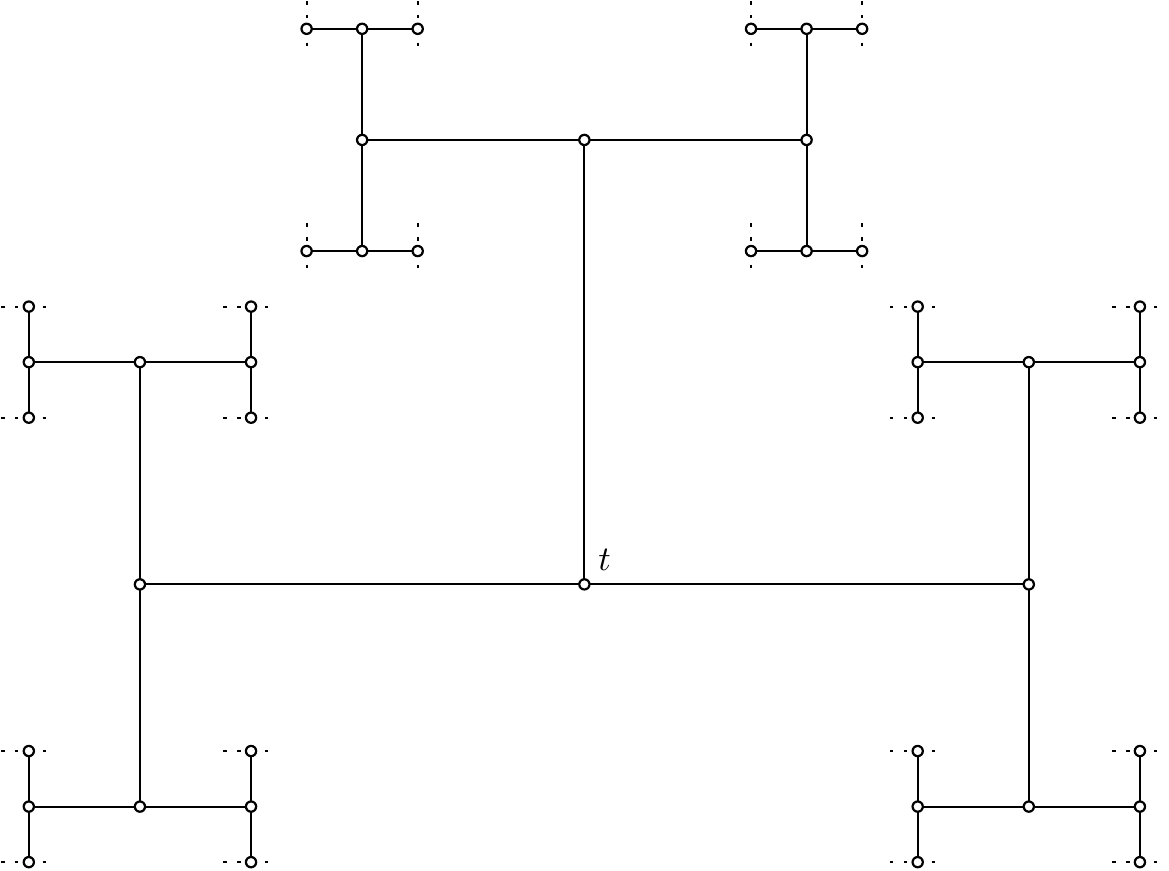}
\caption[The $3$-regular tree]{The $3$-regular tree, and a specified vertex $t$.}
\label{3_regular_tree}
\end{center}
\end{figure}

\begin{figure}[ht]
\begin{center}
\includegraphics[scale=1.2]{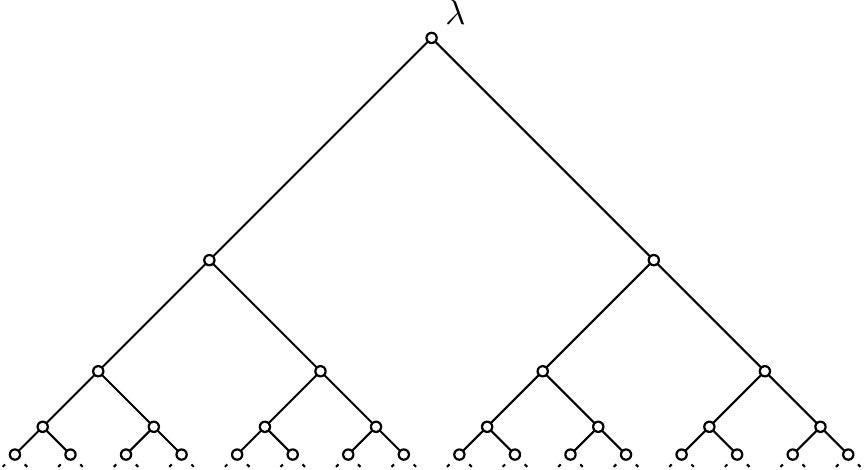}
\caption[The infinite perfect binary tree]{The infinite perfect binary tree; the vertex $\lambda$ is its first ancestor.}
\label{infinite_perfect_binary_tree}
\end{center}
\end{figure}

\begin{figure}[ht]
\begin{center}
\includegraphics[scale=1.2]{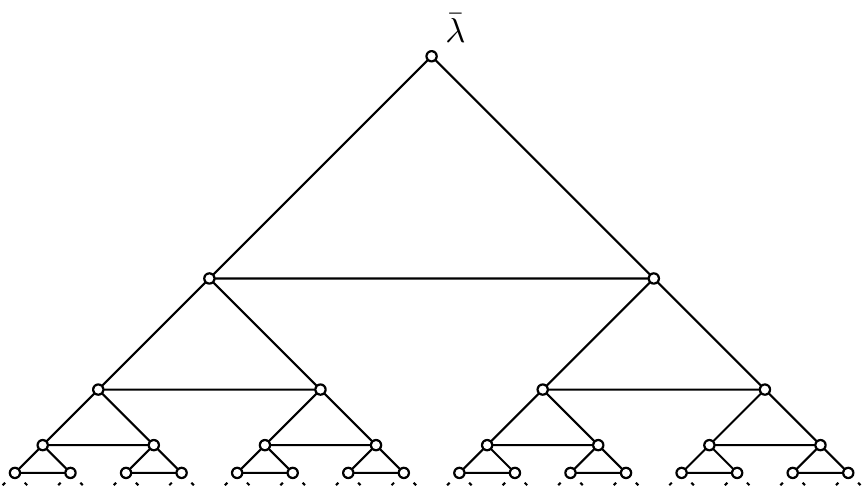}
\caption[The barred binary tree]{The barred binary tree; the vertex $\bar\lambda$ is its first ancestor.}
\label{barred_binary_tree}
\end{center}
\end{figure}

\begin{figure}[ht]
\begin{center}
\includegraphics[scale=1.2]{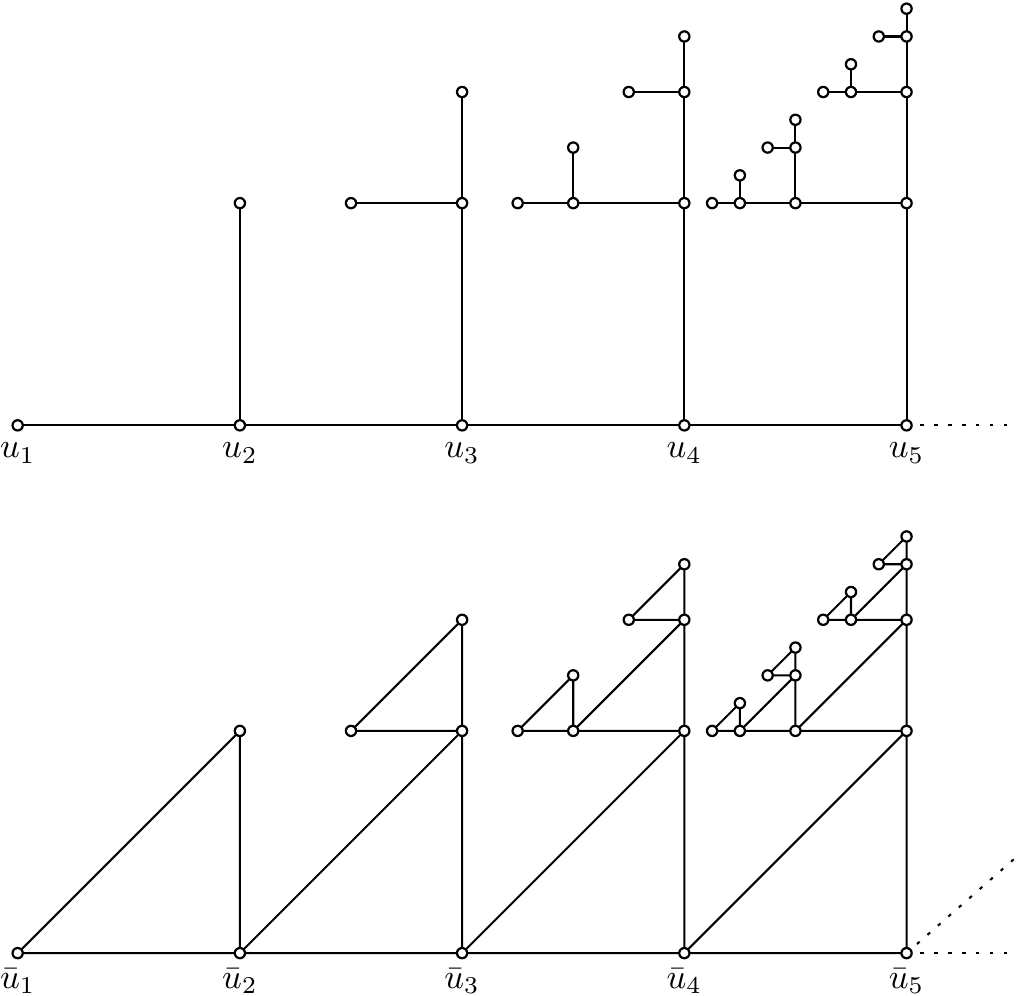}
\caption[The graphs $S$ and $\bar{S}$]{The graphs $S$ and $\bar{S}$; (metric) limits of the balls in $T$ and $\bar\Lambda$, respectively.}
\label{s_and_bar_s}
\end{center}
\end{figure}

Given a positive integer $n$, let $T_n = B_T(t,n)$ and $\bar\Lambda_n = B_{\bar\Lambda}(\bar\lambda,n)$ be the balls in $T$ and $\bar\Lambda$, respectively. There is a unique path $(u_1,u_2,\ldots,u_n,t)$ in $T_n$ where $u_1$ is a leaf. Similarly, there is a unique path $(\bar{u}_1,\bar{u}_2,\ldots,\bar{u}_n,\bar\lambda)$ in $\bar\Lambda_n$ where $\bar{u}_1$ is a vertex of degree $2$, and the vertices of the path belong to pairwise distinct orbits. With this, the graphs $S$\index{S@$S$} and $\bar{S}$\index{S@$\bar{S}$} depicted in Figure \ref{s_and_bar_s} are defined as follows: $[S,u_i] = \lim_{n \to \infty} [T_n,u_i]$ and $[\bar{S},\bar{u}_i] = \lim_{n \to \infty} [\bar\Lambda_n,\bar{u}_i]$ for all positive integers $i$.

\begin{lem}\label{term_conv_implies_sum_conv}
Let $a_n^i$ and $a^i$ be real numbers for all positive integers $n$ and $i$. Suppose that $\lim_{n \to \infty} a_n^i = a^i$, and assume that there is a real number $L$ such that $|a_n^i| \leq L$ and $|a^i| \leq L$ for all positive integers $n$ and $i$. Then
\[
\lim_{n \to \infty} \sum_{i=1}^n \frac{a_n^i}{2^i} = \sum_{i=1}^\infty \frac{a^i}{2^i}.
\]
\end{lem}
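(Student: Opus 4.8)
The plan is to split the tail of the infinite series off from the finite sum and estimate the two pieces separately. Fix $\e > 0$. Since $\sum_{i=1}^\infty |a^i|/2^i \le \sum_{i=1}^\infty L/2^i = L < \infty$, the series $\sum_{i=1}^\infty a^i/2^i$ converges; pick $M$ so large that $\sum_{i=M+1}^\infty L/2^i < \e$. This single choice of $M$ will control all the tails uniformly in $n$, because the hypothesis gives the same bound $L$ on every $a_n^i$ as on every $a^i$.

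Next I would write, for $n > M$,
\[
\left| \sum_{i=1}^n \frac{a_n^i}{2^i} - \sum_{i=1}^\infty \frac{a^i}{2^i} \right|
\le \sum_{i=1}^M \frac{|a_n^i - a^i|}{2^i}
+ \sum_{i=M+1}^n \frac{|a_n^i|}{2^i}
+ \sum_{i=M+1}^\infty \frac{|a^i|}{2^i}.
\]
The last two terms are each bounded by $\sum_{i=M+1}^\infty L/2^i < \e$, using $|a_n^i| \le L$ and $|a^i| \le L$. For the first term, $M$ is now fixed, so it is a finite sum of finitely many sequences each tending to $0$: for each $i \in \{1,\dots,M\}$ we have $a_n^i \to a^i$, hence $\sum_{i=1}^M |a_n^i - a^i|/2^i \to 0$ as $n \to \infty$. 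Therefore there is $N \ge M$ such that this finite sum is less than $\e$ for all $n \ge N$. Combining, the left-hand side is less than $3\e$ for all $n \ge N$, and since $\e$ was arbitrary the limit is as claimed.

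The only mild subtlety — and the step I would be most careful about — is making sure the tail estimate is genuinely uniform in $n$; this is exactly what the uniform bound $L$ (on both $a_n^i$ and $a^i$) is for, and without it the statement would be false. Everything else is the standard "$\e/3$" splitting of a limit of series into a controlled head and a uniformly small tail, and the interchange of limit and finite sum in the head is elementary.
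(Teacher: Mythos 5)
Your proposal is correct and follows essentially the same route as the paper: split the difference at a cutoff $M$ chosen via the uniform bound $L$ so that both tails are uniformly small in $n$, then let $n \to \infty$ in the finite head where pointwise convergence applies. The only cosmetic difference is the bookkeeping of constants (a ``$3\e$'' bound rather than the paper's exact $\e$), which does not affect the argument.
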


\begin{proof}
Let $\e$ be a positive real number. There is a positive integer $M$ such that $4L/2^M < \e$. For each $i \in \{1,2,\ldots,M\}$, there is a positive integer $N_i$ such that
\[
|a_n^i - a^i| < \frac{\e}{2}
\]
for all positive integers $n$ with $n \geq \max\{N_i,M + 1\}$. Choose
\[
N = \max\{N_1,N_2,\ldots,N_M,M + 1\},
\]
and let $n$ be a positive integer with $n \geq N$. Then
\begin{align*}
\left|\sum_{i=1}^n \frac{a_n^i}{2^i} - \sum_{i=1}^\infty \frac{a^i}{2^i}\right| &\leq \sum_{i=1}^M \frac{|a_n^i - a^i|}{2^i} + \sum_{i=M+1}^\infty \frac{|a_n^i|}{2^i} + \sum_{i=M+1}^\infty \frac{|a^i|}{2^i}\\
                       &\leq \sum_{i=1}^M \frac{|a_n^i - a^i|}{2^i} + \sum_{i=M+1}^\infty \frac{2L}{2^i}\\
                       &\leq \sum_{i=1}^M \frac{|a_n^i - a^i|}{2^i} + \frac{2L}{2^M} < \left(\frac{\e}{2}\right) \sum_{i=1}^M \frac{1}{2^i} + \frac{2L}{2^M}\\
                       &< \frac{\e}{2} + \frac{\e}{2} = \e,
\end{align*}
and the result follows.
\end{proof}

\begin{prop}\label{weak_limit_of_trees_barred_trees}
Let $T_n = B_T(t,n)$ and $\bar\Lambda_n = B_{\bar\Lambda}(\bar\lambda,n)$ for all positive integers $n$. The weak limits of $(\Psi(T_n))$ and $(\Psi(\bar\Lambda_n))$ are $\mu_S$ and $\mu_{\bar{S}}$, respectively, defined by
\[
\mu_S[S,u_i] = \frac{1}{2^i} = \mu_{\bar{S}}[\bar{S},\bar{u}_i]
\]
for all positive integers $i$.
\end{prop}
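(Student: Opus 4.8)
The plan is to verify the definition of weak convergence directly. I would fix $f \in \C(\Gr)$, show $\int f\,d\Psi(T_n) \to \int f\,d\mu_S$, and note that the argument for $(\Psi(\bar\Lambda_n))$ and $\mu_{\bar{S}}$ is identical after replacing $T$, $t$, $u_i$, $S$ by $\bar\Lambda$, $\bar\lambda$, $\bar{u}_i$, $\bar{S}$. By Proposition \ref{integral_wrt_law}, $\int f\,d\Psi(T_n) = \frac{1}{|V(T_n)|}\sum_{x\in V(T_n)} f[T_n,x]$ (using that $T_n$ is connected), so everything comes down to understanding $\Rcc(T_n)$ together with its multiplicities, passing to the limit termwise, and controlling the tail; the last two tasks are exactly what Lemma \ref{term_conv_implies_sum_conv} is built for.

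First I would pin down the orbit structure of $T_n = B_T(t,n)$. All leaves of $T_n$ lie at distance $n$ from $t$, and a short computation with the triangle inequality shows that the eccentricity of a vertex at distance $d$ from $t$ equals $n + d$; hence $t$ is the unique vertex of minimum eccentricity and is fixed by every automorphism of $T_n$. Rooting $T_n$ at $t$ displays it as $t$ joined to three isomorphic perfect binary trees of depth $n-1$, and the stabiliser of $t$ acts transitively on each sphere $S_k = \{x : d(t,x) = k\}$ (it may permute the three branches and act level-transitively within each). So the orbits of $\Aut(T_n)$ are precisely $S_0,\dots,S_n$, with $|S_0| = 1$, $|S_k| = 3\cdot 2^{k-1}$ for $1 \le k \le n$, and $|V(T_n)| = 3\cdot 2^n - 2$. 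Setting $u_{n+1} := t$, the vertex $u_i$ lies in $S_{n+1-i}$, so by Proposition \ref{same_orbit_same_rcc} the rooted graphs $[T_n,u_1],\dots,[T_n,u_{n+1}]$ are pairwise distinct and exhaust $\Rcc(T_n)$; grouping the vertex sum of Proposition \ref{integral_wrt_law} by orbits then gives
\[
\int f\,d\Psi(T_n) = \sum_{i=1}^{n+1}\Psi(T_n)[T_n,u_i]\,f[T_n,u_i] = \frac{f[T_n,t]}{3\cdot 2^n - 2} + \sum_{i=1}^{n}\frac{1}{2^i}\cdot\frac{3\cdot 2^n}{3\cdot 2^n - 2}\,f[T_n,u_i].
\]

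Now I would let $n \to \infty$. Since $f$ is bounded, say $|f| \le M$, the $t$-term is at most $M/(3\cdot 2^n - 2) \to 0$. For the remaining sum, put $a_n^i = \frac{3\cdot 2^n}{3\cdot 2^n - 2}\,f[T_n,u_i]$ when $n \ge i$ and $a_n^i = 0$ when $n < i$, and $a^i = f[S,u_i]$. Then $\frac{3\cdot 2^n}{3\cdot 2^n - 2} \to 1$ and, by the definition of $S$, $[T_n,u_i] \to [S,u_i]$ in $(\Gr,\rho)$, so continuity of $f$ gives $a_n^i \to a^i$; moreover $|a_n^i| \le \tfrac{3}{2}M$ and $|a^i| \le M$ for all $n,i$. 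Lemma \ref{term_conv_implies_sum_conv} then yields $\sum_{i=1}^n \tfrac{a_n^i}{2^i} \to \sum_{i=1}^{\infty}\tfrac{a^i}{2^i} = \sum_{i=1}^{\infty} 2^{-i} f[S,u_i]$. Because the $[S,u_i]$ are pairwise distinct (e.g.\ the distance from $u_i$ to the nearest leaf of $S$ is $i-1$) and $\sum_{i \ge 1} 2^{-i} = 1$, the measure $\mu_S$ is the atomic probability measure $\sum_{i \ge 1} 2^{-i}\,\delta_{[S,u_i]}$, so the last sum is exactly $\int f\,d\mu_S$. This proves $\Psi(T_n) \to \mu_S$ weakly; running the same computation with the orbit structure of $\bar\Lambda_n$ — whose orbit sizes make $\Psi(\bar\Lambda_n)[\bar\Lambda_n,\bar{u}_i] \to 2^{-i}$, with any orbits not met by the path $(\bar{u}_1,\dots,\bar{u}_n,\bar\lambda)$ carrying total weight $\to 0$ — and with $[\bar\Lambda_n,\bar{u}_i] \to [\bar{S},\bar{u}_i]$ proves $\Psi(\bar\Lambda_n) \to \mu_{\bar{S}}$.

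The main obstacle will be the structural bookkeeping rather than the analysis: verifying that $\Aut(T_n)$ fixes $t$ and is sphere-transitive (and the analogue for $\bar\Lambda_n$), getting the vertex counts and the index shift $i \leftrightarrow n+1-i$ right, and checking that the $[S,u_i]$ are pairwise distinct. Once the expression is in the shape $\sum_{i=1}^n a_n^i / 2^i$ with a uniform bound on the $a_n^i$, Lemma \ref{term_conv_implies_sum_conv} does the rest.
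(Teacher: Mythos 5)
Your proposal is correct and follows essentially the same route as the paper: compute $\Psi(T_n)$ via the orbit sizes $|\Aut(T_n)u_i| = 3\cdot 2^{n-i}$ (resp.\ $|\Aut(\bar\Lambda_n)\bar u_i| = 2^{n+1-i}$), use continuity of $f$ to get $f[T_n,u_i] \to f[S,u_i]$, and invoke Lemma \ref{term_conv_implies_sum_conv} to pass the limit through the sum. The only difference is that you spell out the orbit structure (eccentricity argument, sphere-transitivity) and the atomicity of $\mu_S$, which the paper leaves as ``using induction, it is clear''; this is added detail, not a different argument.
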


\begin{proof}
Let $f \in \C(\Gr)$. By the continuity of $f$, and the definitions of $S$ and $\bar{S}$,
\[
f[S,u_i] = \lim_{n \to \infty} f[T_n,u_i]
\]
and
\[
f[\bar{S},\bar{u}_i] = \lim_{n \to \infty} f[\bar\Lambda_n,\bar{u}_i]
\]
for all positive integers $i$. Using induction, it is clear that $|V(T_n)| = 3 \cdot 2^n - 2$, $|V(\bar\Lambda)| = 2^{n+1} - 1$, $|\Aut(T_n)u_i| = 3 \cdot 2^{n-i}$, and $|\Aut(\bar\Lambda_n)\bar{u}_i| = 2^{n+1-i}$ for all positive integers $n$ and $i \in \{1,2,\ldots,n\}$. Then
\begin{align*}
\lim_{n \to \infty} \int f ~d\Psi(T_n) &= \lim_{n \to \infty} \left(\sum_{i=1}^n \frac{f[T_n,u_i] \cdot |\Aut(T_n)u_i|}{|V(T_n)|} + \frac{f[T_n,t] \cdot |\Aut(T_n)t|}{|V(T_n)|}\right)\\
                                       &= \lim_{n \to \infty} \left(\sum_{i=1}^n \frac{f[T_n,u_i] \cdot 3 \cdot 2^{n-i}}{3 \cdot 2^n - 2} + \frac{f[T_n,t]}{3 \cdot 2^n - 2}\right)\\
                                       &= \lim_{n \to \infty} \left(\sum_{i=1}^n \frac{f[T_n,u_i]}{2^i}\right) = \sum_{i=1}^\infty \frac{f[S,u_i]}{2^i} = \int f ~d\mu_S
\end{align*}
and
\begin{align*}
\lim_{n \to \infty} \int f ~d\Psi(\bar\Lambda_n) &= \lim_{n \to \infty} \left(\sum_{i=1}^n \frac{f[\bar\Lambda_n,\bar{u}_i] \cdot |\Aut(\bar\Lambda_n)\bar{u}_i|}{|V(\bar\Lambda_n)|} + \frac{f[\bar\Lambda_n,\bar\lambda] \cdot |\Aut(\bar\Lambda_n)\bar\lambda|}{|V(\bar\Lambda_n)|}\right)\\
                                                 &= \lim_{n \to \infty} \left(\sum_{i=1}^n \frac{f[\bar\Lambda_n,\bar{u}_i] \cdot 2^{n+1-i}}{2^{n+1} - 1} + \frac{f[\bar\Lambda_n,\bar\lambda]}{2^{n+1} - 1}\right)\\
                                                 &= \lim_{n \to \infty} \left(\sum_{i=1}^n \frac{f[\bar\Lambda_n,\bar{u}_i]}{2^i}\right) = \sum_{i=1}^\infty \frac{f[\bar{S},\bar{u}_i]}{2^i} = \int f ~d\mu_{\bar{S}}
\end{align*}
where, in either case, the fourth equality holds by Lemma \ref{term_conv_implies_sum_conv}, which applies because $f$ is bounded. Hence the weak limit of $(\Psi(T_n))$ is $\mu_S$, and the weak limit of $(\Psi(\bar\Lambda_n))$ is $\mu_{\bar{S}}$.
\end{proof}
\cleardoublepage

%
%
\section{Uniqueness and existence of sustained unimodular measures}

The purpose of this section is to demonstrate that unimodular measures sustained by connected graphs are unique. More precisely, a connected graph cannot sustain more than one unimodular measure. Following the proof of this result, we discuss its relation to Cayley graphs, and its application to the study of the convexity of $\U$. Additionally, we determine the structure of a unimodular measure sustained by a connected graph.

%
%
\subsection{Infinite connected rigid graphs are lawless}

Recall that a finite connected graph always sustains a unimodular measure, known as its law. However, this is not necessarily true for infinite connected graphs, as the following proposition proves.

\begin{prop}\label{infinite_connected_rigid_graphs_not_judicial}
If $X$ is an infinite connected rigid graph, then there is no unimodular measure sustained by $X$.
\end{prop}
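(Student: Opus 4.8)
The plan is to exploit the rigidity of $X$ to pin down exactly what a sustained measure must look like, and then contradict the unimodularity equation $(\star)$ using a cleverly chosen transport function $f$. Since $X$ is rigid, $\Aut(X)$ is trivial, so by Proposition \ref{same_orbit_same_rcc} the rooted connected components $[X,x]$ for $x \in V(X)$ are pairwise distinct: distinct vertices give distinct points of $\Gr$. Consequently a measure $\mu$ sustained by $X$ is just an assignment of a weight $\mu[X,x] \geq 0$ to each vertex $x$, with $\sum_{x \in V(X)} \mu[X,x] = 1$. In particular, since $V(X)$ is countably infinite and the weights sum to $1$, there must exist a vertex $a$ with $\mu[X,a] > 0$, and moreover the weights cannot all be equal; more importantly, I will want to extract a vertex whose weight is "locally maximal" or to run a summation argument along the graph.

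The key step is to apply $(\star)$ to a well-chosen $f$. Because $X$ is rigid, for each ordered adjacent pair $(x,y)$ the birooted component $[X,x,y]$ determines the pair $(x,y)$ uniquely (an analogue of the proposition on birooted graphs, now with trivial automorphism group), so I can define $f$ on $\BRcc(X)$ essentially by choosing a value for each ordered edge of $X$, independently. The natural choice is to orient things toward a root: fix $a$ with $\mu[X,a]>0$, and for each vertex $x$ pick the edge on a shortest path from $x$ toward $a$; set $f[X,x,y] = 1$ when $(x,y)$ is that distinguished "parent" edge of $x \neq a$, and $f = 0$ otherwise. Then the left side of $(\star)$, $\sum_x \sum_{y \in N(x)} f[X,x,y]\mu[X,x]$, counts $\sum_{x \neq a}\mu[X,x] = 1 - \mu[X,a]$, while the right side, $\sum_x\sum_{y\in N(x)} f[X,y,x]\mu[X,x]$, equals $\sum_x (\#\{\text{children of } x\})\,\mu[X,x]$. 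The difficulty is that this "parent" structure need not be a spanning tree with finitely many children per node, and the right-hand sum could diverge or simply fail to give a contradiction directly; one must argue that equality $1 - \mu[X,a] = \sum_x c(x)\mu[X,x]$, with $c(x) \geq 0$ integers, is incompatible with $\mu[X,a]>0$ — but this is false in general (e.g.\ $\ZZ$-like structures give $c(x)=1$ for all $x$). So a single global transport is too crude.

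The honest route, which I expect the author takes, is a limiting/localization argument. I would proceed by contradiction: assume $\mu$ is a unimodular measure sustained by $X$. First show, using $(\star)$ with finitely-supported $f$, a local balance identity relating the weight of a ball to the weight flowing across its boundary; concretely, for a finite vertex set $A$, transporting mass one unit along each boundary edge gives $\sum_{x \in A}\mu[X,x]\cdot(\text{out-degree of }x\text{ from }A)$ on one side and the analogous in-flow on the other, forcing the net flux across $\partial A$ to vanish for every finite $A$. Apply this to $A = V(B_X(a,r))$ for a fixed vertex $a$ and all $r$: the net flux across each sphere is zero. Combined with connectivity and the fact that the total mass is finite (so $\mu[X,B_X(a,r)] \to 1$ and the "tail" mass $\to 0$), derive that $\mu[X,a]$ must be $0$ — roughly, mass conservation across every sphere plus vanishing tail forces each individual weight to $0$, contradicting $\sum_x \mu[X,x]=1$. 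The main obstacle is making this flux-telescoping rigorous for infinite graphs with unbounded (but $\leq \Delta$) degrees: one must handle the possibly-infinite sums carefully, justify rearrangement via nonnegativity, and ensure the chosen $f$ genuinely vanishes off $\BRcc(X)$ and is legitimate in $(\star)$. Once the "every sustained measure assigns zero to every vertex" conclusion is reached, it contradicts $\mu(\Rcc(X)) = 1$, so no such $\mu$ exists and $X$ is lawless.
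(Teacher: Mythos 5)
Your proposal stalls exactly where the paper's proof becomes easy, and the step you substitute for it does not work. You correctly observe that rigidity makes the birooted class $[X,x,y]$ determine the ordered pair $(x,y)$, but you never exploit the simplest consequence of this: take $f$ to be the characteristic function of the \emph{single} class $\{[X,a,b]\}$ for one ordered edge. Then the left side of $(\star)$ collapses to $\mu[X,a]$ and the right side to $\mu[X,b]$, so $\mu[X,a]=\mu[X,b]$ for every pair of adjacent vertices; by connectivity $\mu$ is constant on $V(X)$, and a constant value on a countably infinite vertex set cannot sum to $1$. That is the paper's entire proof. Your first attempt (the ``parent edge'' transport) is rightly discarded, but instead of returning to single-edge test functions you pass to a flux argument whose concluding step is not a valid deduction.

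Concretely, the gap is the claim that ``mass conservation across every sphere plus vanishing tail forces each individual weight to $0$.'' The ball-flux identities around a fixed vertex $a$ are strictly weaker than unimodularity and do not imply this. For instance, take a one-way infinite path $0,1,2,\ldots$ rigidified by attaching pendant paths at infinitely many vertices (chosen so that no automorphisms survive), and root at $a=0$. The flux identity for $B_X(0,r)$ only relates $\mu$ at the ray vertex $r$ to $\mu$ at $r+1$ and at the pendant vertex hanging off $r$ (if any); one can halve the ray weight at each decorated vertex and dump the excess onto the pendant vertices, obtaining weights of total mass $1$, nowhere zero, satisfying every ball-flux identity around $0$ while the tails of the balls vanish. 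So your telescoping cannot close the argument. Even upgrading to flux identities for \emph{all} finite sets $A$ only yields the vertex-level relation $\deg(x)\,\mu[X,x]=\sum_{y\in N(x)}\mu[X,y]$ (a harmonicity condition), and ruling out a summable positive solution of that requires a genuinely further argument — whereas the edge-level identity $\mu[X,a]=\mu[X,b]$, which rigidity hands you for free, finishes the proof in two lines.
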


\begin{proof}
Suppose that $\mu$ is a unimodular measure sustained by $X$. Since $X$ is rigid, there is a unique birooted graph $[X,x,y]$ for each pair of neighbours $(x,y)$ in $X$. That is, $[X,x_1,y_1] = [X,x_2,y_2]$ if and only if $(x_1,y_1) = (x_2,y_2)$ for all adjacent vertices $x_1$ and $y_1$, and $x_2$ and $y_2$ of $X$. Let $a \in V(X)$ and $b \in N(a)$. Denote by $f$ the characteristic function of the singleton $\{[X,a,b]\}$. By the unimodularity of $\mu$,
\[
\sum_{x \in V(X)} \sum_{y \in N(x)} f[X,x,y] \cdot \mu[X,x] = \sum_{x \in V(X)} \sum_{y \in N(x)} f[X,y,x] \cdot \mu[X,x],
\]
and so $\mu[X,a] = \mu[X,b]$ because $X$ is rigid. Hence there is a real number $k$ such that $\mu[X,x] = k$ for all $x \in V(X)$. Then
\[
1 = \mu(\Gr) = \sum_{x \in V(X)} \mu[X,x] = \sum_{x \in V(X)} k,
\]
which is a contradiction.
\end{proof}

%
%
\subsection{Example: the graph $S$ where $\Rcc(S)$ is infinite}

\begin{figure}[ht]
\begin{center}
\includegraphics[scale=1.2]{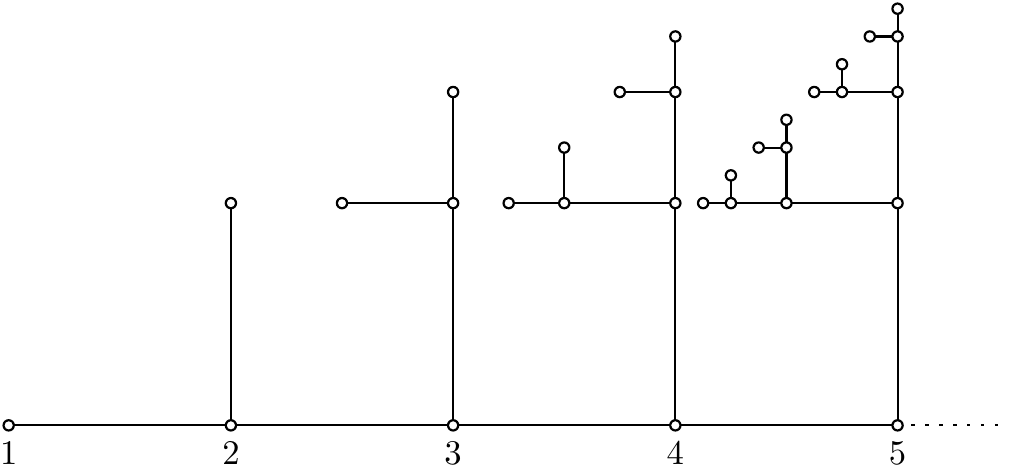}
\caption[The graph $S$]{The graph $S$.}
\label{the_graph_s}
\end{center}
\end{figure}

Consider the graph $S$ in Figure \ref{the_graph_s}. For convenience, let $\Rcc(S) = \{[S,i] ~:~ i \geq 1\}$. Suppose that $\mu$ is a measure on $\Gr$ such that $\sum_{i=1}^\infty \mu[S,i] = 1$. It is valid to ask how $\mu$ may be defined. An example of a unimodular measure $\mu_S$ sustained by $S$ is described in Proposition \ref{weak_limit_of_trees_barred_trees} where $\mu_S$ is defined by $\mu_S[S,i] = 2^{-i}$ for all positive integers $i$. In fact, this is the only unimodular measure sustained by $S$.

To understand why, recall that such a measure $\mu$ is unimodular if and only if
\[
\sum_{i=1}^\infty \sum_{j \in N(i)} f[S,i,j] \cdot \mu[S,i] = \sum_{i=1}^\infty \sum_{j \in N(i)} f[S,j,i] \cdot \mu[S,i] \tag{$\star$}
\]
for all nonnegative functions $f$ on $\dGr$. Observe that $N(1) = \{2\}$, and
\[
N(i) = \{i - 1,i + 1,\wt{i - 1}\}
\]
where $\wt{i - 1}$ belongs to the orbit $\Aut(S)(i - 1)$ if $i \geq 2$. Let $f$ be a bounded nonnegative function on $\dGr$. Then
\[
\sum_{j \in N(i)} f[S,j,i] =
\begin{cases}
f[S,2,1] & \text{ if $i = 1$,}\\
2f[S,i - 1,i] + f[S,i + 1,i] & \text{ if $i \geq 2$}\\
\end{cases}
\]
where $f[S,\wt{i - 1},i] = f[S,i - 1,i]$ because there is an automorphism of $S$ that fixes $i$ and exchanges $i - 1$ and $\wt{i - 1}$. Similarly,
\[
\sum_{j \in N(i)} f[S,i,j] =
\begin{cases}
f[S,1,2] & \text{ if $i = 1$,}\\
2f[S,i,i - 1] + f[S,i,i + 1] & \text{ if $i \geq 2$.}\\
\end{cases}
\]
Since $f$ is bounded, the sums on either side of ($\star$) are finite, which means the equation ($\star$) is satisfied precisely when
\[
\sum_{i=1}^\infty \sum_{j \in N(i)} (f[S,j,i] - f[S,i,j]) \cdot \mu[S,i] = 0.
\]
Using these equations,
\begin{align*}
0 &= \sum_{i=1}^\infty \sum_{j \in N(i)} (f[S,j,i] - f[S,i,j]) \cdot \mu[S,i]\\
  &= \sum_{j \in N(1)} (f[S,j,1] - f[S,1,j]) \cdot \mu[S,1] + \sum_{i=2}^\infty \sum_{j \in N(i)} (f[S,j,i] - f[S,i,j]) \cdot \mu[S,i]\\
  &= (f[S,2,1] - f[S,1,2]) \cdot \mu[S,1]\\
  &+ (2f[S,1,2] + f[S,3,2] - 2f[S,2,1] - f[S,2,3]) \cdot \mu[S,2]\\
  &+ (2f[S,2,3] + f[S,4,3] - 2f[S,3,2] - f[S,3,4]) \cdot \mu[S,3] + \cdots
\end{align*}
because $\mu$ is unimodular. Note that $f[S,2,1]$ appears as a multiple of $\mu[S,1]$ and $\mu[S,2]$. Furthermore, $f[S,2,1]$ does not appear as any other multiple. The remaining terms possess a similar pattern. Let us rearrange, factor, and combine the terms in the summation above to obtain
\begin{align*}
0 &= f[S,2,1] \cdot (\mu[S,1] - 2\mu[S,2]) + f[S,1,2] \cdot (2\mu[S,2] - \mu[S,1])\\
  &+ f[S,3,2] \cdot (\mu[S,2] - 2\mu[S,3]) + f[S,2,3] \cdot (2\mu[S,3] - \mu[S,2]) + \cdots\\
  &= \sum_{i=1}^\infty (f[S,i + 1,i] - f[S,i,i + 1]) \cdot (\mu[S,i] - 2\mu[S,i + 1]). \tag{$\star\star$}
\end{align*}
By the definition of unimodularity, it is possible to let $f$ be the characteristic function of the singleton $\{[S,2,1]\}$. In this case, ($\star\star$) simplifies to the equation $0 = \mu[S,1] - 2\mu[S,2]$. That is, $\mu[S,1] = 2\mu[S,2]$. In fact, by choosing different characteristic functions, we see that $\mu[S,k] = 2\mu[S,k + 1]$ for all positive integers $k$.

These equations restrict the measure $\mu$, but they do not define it. However, $\mu$ is uniquely determined because $\mu(\Gr) = 1$. To see this, it suffices to calculate $\mu[S,1]$. By induction and the equation $\mu[S,k] = 2\mu[S,k + 1]$, it is easy to see that $\mu[S,1] = \mu[S,k] \cdot 2^{k-1}$ for all positive integers $k$. Then
\[
1 = \sum_{i=1}^\infty \mu[S,i] = \sum_{i=1}^\infty \frac{\mu[S,1]}{2^{i-1}} = 2\mu[S,1] \cdot \sum_{i=1}^\infty \frac{1}{2^i} = 2\mu[S,1],
\]
and so $\mu[S,1] = 1/2$. Since the measure of every rooted connected component of $S$ is a multiple of $\mu[S,1]$, it follows that $\mu$ is uniquely determined. Indeed, $\mu = \mu_S$, the previously mentioned measure.

%
%
\subsection{Example: the tree $T_{3,4}$ where $\Rcc(T_{3,4})$ is finite}

\begin{figure}[ht]
\begin{center}
\includegraphics[scale=1.2]{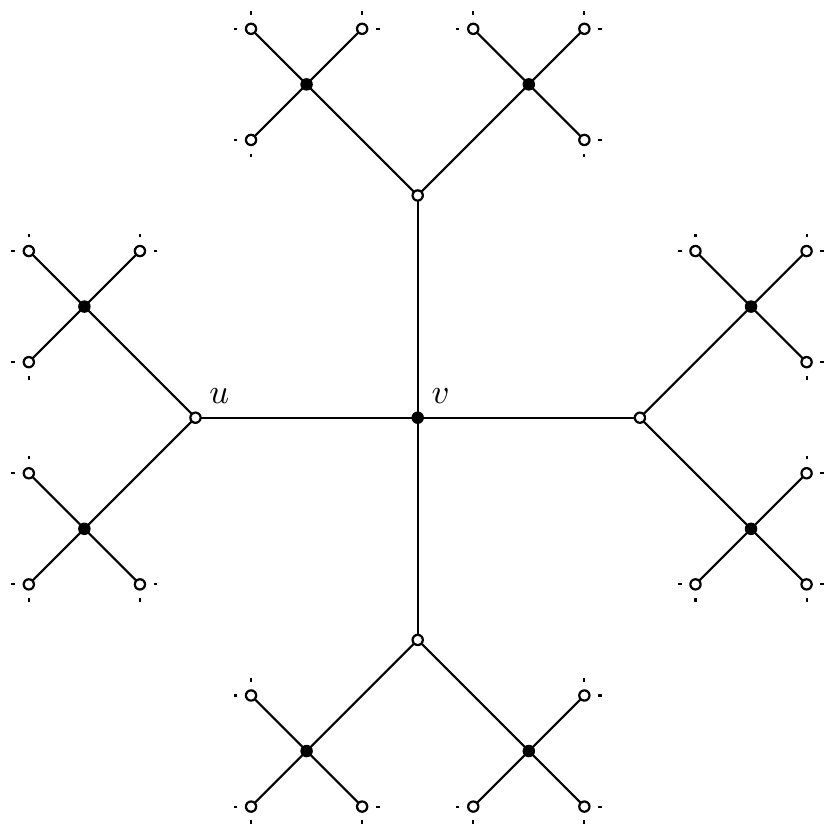}
\caption[The infinite tree $T_{3,4}$]{The infinite tree $X = T_{3,4}$; its orbits are $\Aut(X)u$ and $\Aut(X)v$, which are represented by unshaded and shaded vertices, respectively.}
\label{the_tree_t34}
\end{center}
\end{figure}

The following example deals with an infinite graph whose set of rooted connected components is finite. Consider the infinite tree $X = T_{3,4}$ depicted in Figure \ref{the_tree_t34} whose vertices alternate having $3$ and $4$ neighbours. Let $u$ be a vertex of degree $3$, and let $v$ be a vertex of degree $4$. The rooted connected components of $X$ are $[X,u]$ and $[X,v]$.

Suppose that $\mu$ is a unimodular measure sustained by $X$. That is, $\{[X,u],[X,v]\}$ has full measure. Note that $[X,u,y] = [X,u,v]$ for all $y \in N(u)$ and $[X,y,v] = [X,u,v]$ for all $y \in N(v)$. Denote by $f$ the characteristic function of the singleton $\{[X,u,v]\}$. Since $\mu$ is unimodular and $f$ is bounded,
\begin{align*}
0 &= \sum_{x \in \{u,v\}} \sum_{y \in N(x)} (f[X,x,y] - f[X,y,x]) \cdot \mu[X,x]\\
  &= \sum_{y \in N(u)} (f[X,u,y] - f[X,y,u]) \cdot \mu[X,u] + \sum_{y \in N(v)} (f[X,v,y] - f[X,y,v]) \cdot \mu[X,v]\\
  &= \sum_{y \in N(u)} f[X,u,y] \cdot \mu[X,u] - \sum_{y \in N(v)} f[X,y,v] \cdot \mu[X,v]\\
  &= |N(u)| f[X,u,v] \cdot \mu[X,u] - |N(v)| f[X,u,v] \cdot \mu[X,v]\\
  &= 3\mu[X,u] - 4\mu[X,v]
\end{align*}
where $f[X,y,u] = 0$ for all $y \in N(u)$ and $f[X,v,y] = 0$ for all $y \in N(v)$ because $[X,u] \neq [X,v]$. Furthermore, $\mu[X,u] + \mu[X,v] = 1$, and so these two equations yield $\mu[X,u] = 4/7$ and $\mu[X,v] = 3/7$. Hence $\mu$ is uniquely determined.

%
%
\subsection{Example: the tree $T_{3,2,4}$}

\begin{figure}[ht]
\begin{center}
\includegraphics[scale=1.2]{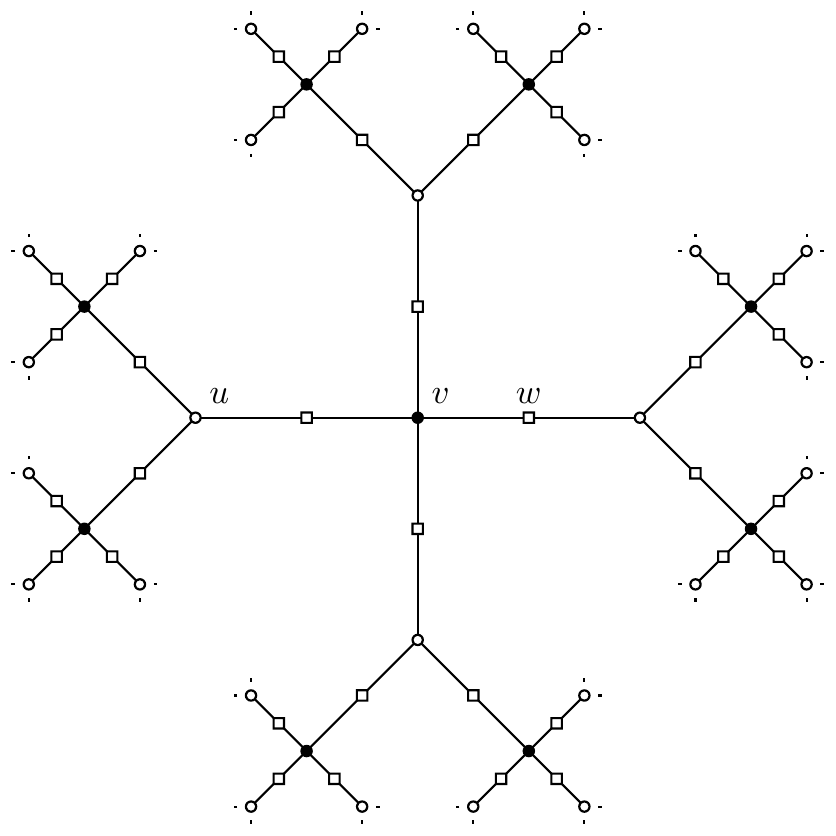}
\caption[The infinite tree $T_{3,2,4}$]{The infinite tree $Y = T_{3,2,4}$; its orbits are $\Aut(Y)u$, $\Aut(Y)v$, and $\Aut(Y)w$, which are represented by unshaded, shaded, and square vertices, respectively.}
\label{the_tree_t324}
\end{center}
\end{figure}

Let us build on the previous example by modifying the tree $T_{3,4}$. Consider the infinite tree $Y = T_{3,2,4}$, shown in Figure \ref{the_tree_t324}, defined by subdividing each edge of $T_{3,4}$ exactly once. This tree has three types of vertices: let $u$ be a vertex of degree $3$, let $v$ be a vertex of degree $4$, and let $w$ be a vertex of degree $2$. The set of rooted connected components of $Y$ is simply $\{[Y,u],[Y,v],[Y,w]\}$. Intuitively, the neighbours of $u$ belong to the orbit of $w$, the neighbours of $v$ belong to the orbit of $w$, and the neighbours of $w$ belong to both the orbits of $u$ and $v$.

Suppose that $\mu$ is a unimodular measure sustained by $Y$. Let $f$ be a bounded nonnegative function on $\dGr$. Observe that
\[
\sum_{y \in N(u)} (f[Y,u,y] - f[Y,y,u]) = 3(f[Y,u,w] - f[Y,w,u])
\]
and
\[
\sum_{y \in N(v)} (f[Y,v,y] - f[Y,y,v]) = 4(f[Y,v,w] - f[Y,w,v])
\]
using similar reasoning as in the previous example. Furthermore, $N(w) = \{u,v\}$ implies that
\[
\sum_{y \in N(w)} (f[Y,w,y] - f[Y,y,w]) = f[Y,w,u] - f[Y,u,w] + f[Y,w,v] - f[Y,v,w].
\]
Thus, for all bounded nonnegative functions $f$ on $\dGr$,
\begin{align*}
0 &= \sum_{x \in \{u,v,w\}} \sum_{y \in N(x)} (f[Y,x,y] - f[Y,y,x]) \cdot \mu[Y,x]\\
  &= 3(f[Y,u,w] - f[Y,w,u]) \cdot \mu[Y,u] + 4(f[Y,v,w] - f[Y,w,v]) \cdot \mu[Y,v]\\
  &+ (f[Y,w,u] - f[Y,u,w] + f[Y,w,v] - f[Y,v,w]) \cdot \mu[Y,w].
\end{align*}
In particular, this is true for the characteristic functions of the singletons $\{[Y,u,w]\}$ and $\{[Y,v,w]\}$. More precisely,
\[
0 = 3\mu[Y,u] + 0 + (-1)\mu[Y,w] = 3\mu[Y,u] - \mu[Y,w]
\]
and
\[
0 = 0 + 4\mu[Y,v] + (-1)\mu[Y,w] = 4\mu[Y,v] - \mu[Y,w],
\]
respectively. Since $\mu[Y,u] + \mu[Y,v] + \mu[Y,w] = 1$, we have a system of three equations,
\begin{empheq}[left=\empheqlbrace]{align*}
3\mu[Y,u] - \mu[Y,w]           &= 0\\
4\mu[Y,v] - \mu[Y,w]           &= 0\\
\mu[Y,u] + \mu[Y,v] + \mu[Y,w] &= 1
\end{empheq}
whose solution is $\mu[Y,u] = 4/19$, $\mu[Y,v] = 3/19$, and $\mu[Y,w] = 12/19$. Hence $\mu$ is uniquely determined.

%
%
\subsection{Connected graphs sustain at most one unimodular measure}

As these examples have shown, the claim that every connected graph sustains at most one unimodular measure is plausible. Having given the reader a taste of the ideas used in this section, it is time to demonstrate the main result.

For convenience, let $G = \Aut(X)$, and let $G_x = \{\varphi \in G ~:~ \varphi(x) = x\}$\index{Gx@$G_x$} be the \emph{stabilizer subgroup}\index{stabilizer subgroup} of $x \in V(X)$. With this notation, $Gx$\index{Gx@$Gx$} is the orbit of $x \in V(X)$ under the action of $G$, and $G_x y$\index{Gxy@$G_x y$} is the orbit of $y \in V(X)$ under the action of $G_x$. The following result is a new observation.

%
%
\begin{framed}
\begin{theo}\label{uniqueness_of_sustained_unimodular_measures}
Let $X$ be a connected graph. If $\mu$ and $\nu$ are unimodular measures sustained by $X$, then $\mu = \nu$.
\end{theo}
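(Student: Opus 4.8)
The plan is to first establish the ``only if'' direction of the unimodularity criterion announced in the introduction: if $\mu$ is a unimodular measure sustained by the connected graph $X$, then $|G_a b|\,\mu[X,a] = |G_b a|\,\mu[X,b]$ for every pair of adjacent vertices $a,b$, where $G=\Aut(X)$. To get this I would feed the characteristic function $f$ of the singleton $\{[X,a,b]\}\subseteq\dGr$ into the countable reformulation $(\star)$ of unimodularity. Fixing a set $R$ of orbit representatives so that $\Rcc(X)=\{[X,r]:r\in R\}$, the left-hand side of $(\star)$ collapses to the single term coming from the representative $r$ of the orbit $Ga$: transporting by an automorphism sending $r$ to $a$ (which restricts to a bijection $N(r)\to N(a)$), the inner sum $\sum_{y\in N(r)}f[X,r,y]$ counts exactly the neighbours of $a$ lying in the stabilizer-orbit $G_a b\subseteq N(a)$, hence equals $|G_a b|$; by the analogous computation the right-hand side equals $|G_b a|\,\mu[X,b]$, and the criterion follows. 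The same computation is valid when $a$ and $b$ lie in a single orbit.

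Next I would run an induction on $d_X(1,x)$, for a fixed base vertex $1\in V(X)$, to show that $\mu[X,x]\,\nu[X,1]=\nu[X,x]\,\mu[X,1]$ for every $x\in V(X)$. The base case is trivial; for the step, pick a neighbour $y$ of $x$ with $d_X(1,y)=d_X(1,x)-1$, note that $|G_x y|$ and $|G_y x|$ are positive and finite (bounded by $\Delta$), rewrite the criterion as $\mu[X,x]=\frac{|G_y x|}{|G_x y|}\mu[X,y]$ and $\nu[X,x]=\frac{|G_y x|}{|G_x y|}\nu[X,y]$, and combine with the inductive hypothesis. The content here is only that the ratio $|G_y x|/|G_x y|$ depends on $X$ alone, not on the measure.

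Finally I would fix the normalisation. Since $\mu$ is a probability measure sustained by $X$ and $\Rcc(X)$ is countable, $\sum_{[X,x]\in\Rcc(X)}\mu[X,x]=\mu(\Rcc(X))=1$; in particular $\mu[X,1]\neq 0$, since otherwise the induction forces $\mu[X,x]=0$ for all $x$, and likewise $\nu[X,1]\neq 0$. Dividing the identity of the previous paragraph gives $\mu[X,x]=\frac{\mu[X,1]}{\nu[X,1]}\,\nu[X,x]$ for all $x$; summing over $\Rcc(X)$ yields $1=\frac{\mu[X,1]}{\nu[X,1]}$, so $\mu$ and $\nu$ agree on all of $\Rcc(X)$, and the earlier proposition that two measures sustained by $X$ which agree on $\Rcc(X)$ must coincide finishes the argument. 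I expect the orbit bookkeeping in the first step---checking that the inner sums genuinely count stabilizer-orbit sizes, and that pairs in a common orbit cause no trouble---to be the only delicate point; the rest is routine. (As a byproduct, this also recovers Theorem~\ref{existence_of_sustained_unimodular_measures}(1): the value produced for $\mu[X,x]$ along any path is forced, so it cannot depend on the path.)
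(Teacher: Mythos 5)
Your proposal is correct and follows essentially the same route as the paper: you derive the criterion $|G_a b|\mu[X,a]=|G_b a|\mu[X,b]$ by testing unimodularity against the characteristic function of $\{[X,a,b]\}$ (the paper's Proposition~\ref{criterion_for_unimodularity} via Lemma~\ref{birooted_equiv_to_stabilizer}), propagate it along paths from a base vertex (your induction on $d_X(1,x)$ is just the paper's telescoping product), and normalise using that both are probability measures on the countable set $\Rcc(X)$. Your explicit check that $\mu[X,1]\neq 0$ and $\nu[X,1]\neq 0$ is a small point of extra care the paper leaves implicit, but it does not change the argument.
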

\end{framed}

\begin{rem}
Equivalently, whenever $X$ is a connected judicial graph, it sustains a unique unimodular measure. Denote this measure by $\Psi(X)$\index{PsiX@$\Psi(X)$!unique unimodular measure sustained by a connected judicial graph}.
\end{rem}

\begin{lem}\label{birooted_equiv_to_stabilizer}
Let $[X,a,b],[X,a,y],[X,x,b] \in \dGr$. Then $[X,a,b] = [X,a,y]$ if and only if $y \in G_a b$; and $[X,a,b] = [X,x,b]$ if and only if $x \in G_b a$.
\end{lem}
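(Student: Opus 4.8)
The plan is to unwind the definition of equality of birooted graphs as isomorphism classes, exactly as in the proof of the proposition comparing $[X,a,b]$ with $[X,c,d]$, but now keeping careful track of which root is being fixed. Recall that $[X,a,b] = [X,a,y]$ means there is an automorphism $\varphi$ of $X$ (since both birooted graphs live on the same connected graph $X$) with $\varphi(a) = a$ and $\varphi(b) = y$; and conversely any such $\varphi$ witnesses the equality. So the entire content of the first equivalence is a translation between ``$\varphi$ fixes $a$ and sends $b$ to $y$'' and ``$\varphi \in G_a$ and $y = \varphi(b)$'', i.e.\ $y \in G_a b$. The second equivalence is the mirror image with the roles of the two roots swapped: $[X,a,b] = [X,x,b]$ iff there is $\varphi \in G$ with $\varphi(b) = b$ and $\varphi(a) = x$, i.e.\ $\varphi \in G_b$ and $x = \varphi(a)$, i.e.\ $x \in G_b a$.

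Concretely I would write it as follows. For the first claim: if $[X,a,b] = [X,a,y]$, pick an isomorphism $\varphi$ between the underlying (connected) graphs respecting the roots; it is an automorphism of $X$ with $\varphi(a) = a$, so $\varphi \in G_a$, and $\varphi(b) = y$, so $y = \varphi(b) \in G_a b$. Conversely, if $y \in G_a b$, choose $\varphi \in G_a$ with $\varphi(b) = y$; then $\varphi$ is an automorphism of $X$ with $\varphi(a) = a$ and $\varphi(b) = y$, hence $[X,a,b] = [X,a,y]$. The second claim is proved the same way, fixing $b$ instead of $a$: $[X,a,b] = [X,x,b]$ iff some $\varphi \in G$ with $\varphi(b) = b$ has $\varphi(a) = x$, iff $x \in G_b a$.

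The one point that deserves a sentence of care — and the only place this is not completely mechanical — is the implicit use of connectedness of $X$ to guarantee that the isomorphism of birooted graphs is an automorphism of $X$ (not merely of some component), so that it actually lies in $G = \Aut(X)$; since $a, b$ are adjacent they lie in the same component, and for $[X,a,y]$ (resp.\ $[X,x,b]$) to even be a valid birooted graph on $X$ all vertices involved lie in that same component, which is all of $X$ by hypothesis. Beyond that, there is no real obstacle: this lemma is a pure bookkeeping step whose purpose is to repackage the automorphism-orbit statements of Proposition \ref{same_orbit_same_rcc} in the birooted setting, so that the forthcoming unimodularity criterion can be stated in terms of the stabilizer orbit sizes $|G_a b|$ and $|G_b a|$.
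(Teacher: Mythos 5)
Your proposal is correct and follows essentially the same route as the paper: equality of birooted graphs is witnessed by an automorphism of $X$ fixing the first (resp.\ second) root, which translates directly into $y \in G_a b$ (resp.\ $x \in G_b a$), and conversely any such stabilizer element witnesses the equality. Your extra remark about connectedness of $X$ (implicit since $[X,a,b] \in \dGr$) is a fair point of care but does not change the argument.
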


\begin{proof}
Suppose that $[X,a,b] = [X,a,y]$. Then there is a $\varphi \in G$ such that $\varphi(a) = a$ and $\varphi(b) = y$. Since $\varphi \in G_a$, it follows that $y \in G_a b$. On the other hand, assume that $y \in G_a b$. Then there is an automorphism $\varphi \in G_a$ such that $y = \varphi(b)$. Thus $[X,a,b] = [X,\varphi(a),\varphi(b)] = [X,a,y]$. Similar reasoning applies to the second statement.
\end{proof}

Proposition \ref{criterion_for_unimodularity} is a new criterion for unimodularity in the case of measures sustained by connected graphs. Aside from its importance in the proof of Theorem \ref{uniqueness_of_sustained_unimodular_measures}, it is very useful on its own, which is why it is not merely a lemma.

\begin{prop}\label{criterion_for_unimodularity}
Suppose that $\mu$ is a measure sustained by a connected graph $X$. Then $\mu$ is unimodular if and only if
\[
|G_a b|\mu[X,a] = |G_b a|\mu[X,b]
\]
for all adjacent vertices $a$ and $b$ of $X$. In addition, $\mu$ is unimodular only if
\[
|Gb \cap N(a)|\mu[X,a] = |Ga \cap N(b)|\mu[X,b]
\]
for all adjacent vertices $a$ and $b$ of $X$.
\end{prop}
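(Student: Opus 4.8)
The plan is to reduce the unimodularity equation ($\star$) to the stated pointwise identity by testing it against carefully chosen characteristic functions, exactly in the spirit of the worked examples for $S$, $T_{3,4}$, and $T_{3,2,4}$ given above. Fix adjacent vertices $a$ and $b$ of $X$ and let $f = \chi_{\{[X,a,b]\}}$, the characteristic function of the singleton $\{[X,a,b]\}$ in $\dGr$. On the left side of ($\star$), a term $f[X_x,x,y] = f[X,x,y]$ is nonzero precisely when $[X,x,y] = [X,a,b]$; by Proposition stating that equality of birooted graphs forces equality of the rooted endpoints, this can only happen when $[X,x] = [X,a]$, i.e. $x \in Ga$, and for such $x$ the inner sum $\sum_{y \in N(x)} f[X,x,y]$ counts the neighbours $y$ of $x$ with $[X,x,y]=[X,a,b]$. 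By transporting along an automorphism sending $x$ to $a$, this count equals the number of $y' \in N(a)$ with $[X,a,y'] = [X,a,b]$, which by Lemma \ref{birooted_equiv_to_stabilizer} is exactly $|G_a b|$. So the left side collapses to $|G_a b| \sum_{x \in Ga} \mu[X,x] = |G_a b|\,\mu[X,a]$, using that all vertices in the orbit $Ga$ share the rooted connected component $[X,a]$ (Proposition \ref{same_orbit_same_rcc}) and that $\mu$ is a measure sustained by $X$. Symmetrically, the right side of ($\star$) collapses to $|G_b a|\,\mu[X,b]$, giving $|G_a b|\mu[X,a] = |G_b a|\mu[X,b]$.

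For the converse direction, I would argue that it suffices to verify ($\star$) on characteristic functions of singletons $\{[X,a,b]\}$, since every bounded nonnegative $f$ on $\dGr$ supported on the countable set $\BRcc(X)$ is a countable nonnegative combination of such characteristic functions and both sides of ($\star$) are (by boundedness of $f$, as in the $S$ example) absolutely convergent, so linearity and a dominated-convergence/rearrangement argument reduce the general case to the singleton case. Given the hypothesis $|G_a b|\mu[X,a] = |G_b a|\mu[X,b]$ for all adjacent $a,b$, the computation above shows both sides of ($\star$) for $f = \chi_{\{[X,a,b]\}}$ equal this common value, so ($\star$) holds in general and $\mu$ is unimodular.

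For the second, weaker statement, note that $Gb \cap N(a)$ is the set of neighbours of $a$ lying in the orbit of $b$, and by Proposition \ref{same_orbit_same_rcc} these are exactly the neighbours $y$ of $a$ with $[X,y] = [X,b]$. Summing the first identity over a set of orbit representatives: partition $Gb \cap N(a)$ into $G_a$-orbits; for a representative $b'$ the orbit $G_a b'$ has size $|G_a b'| = |G_{a} b|$ only if $[X,a,b'] = [X,a,b]$, which need not hold for all $b' \in Gb \cap N(a)$. A cleaner route is to apply the first part with $f$ the characteristic function of the (finite) set $\{[X,a,y] : y \in N(a),\ [X,y]=[X,b]\}$, i.e. effectively summing over those birooted types; tracking the counts then yields $|Gb \cap N(a)|\,\mu[X,a] = |Ga \cap N(b)|\,\mu[X,b]$ after one checks the bookkeeping via Lemma \ref{birooted_equiv_to_stabilizer} and the orbit–stabilizer correspondence.

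The main obstacle I anticipate is the bookkeeping in this last step — making precise how the neighbours of $a$ in the orbit $Gb$ decompose into $G_a$-orbits and how summing the pointwise identity over representatives reassembles into the counts $|Gb \cap N(a)|$ and $|Ga \cap N(b)|$; one must be careful that distinct $G_a$-orbits of neighbours correspond to distinct birooted types and conversely. In the first (equivalence) part the only subtlety is the interchange of summation justified by boundedness of the test function, which is routine given the template already used for the graph $S$.
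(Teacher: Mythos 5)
Your strategy is the paper's own: test the sustained-measure form of unimodularity on characteristic functions of singletons $\{[X,a,b]\}$, decompose a general nonnegative $f$ supported on the countable set $\BRcc(X)$ into such characteristic functions for the converse, and use a second characteristic function for the orbit statement. However, three points need repair before this is a proof. First, in the forward direction you collapse the left-hand side to $|G_a b|\sum_{x \in Ga}\mu[X,x]$ and assert this equals $|G_a b|\,\mu[X,a]$ ``because all vertices of $Ga$ share the component'': read literally, a sum over the vertices of the orbit would give $|Ga|\cdot|G_a b|\,\mu[X,a]$ (divergent when $Ga$ is infinite). The correct observation is that the outer sum in the reformulated unimodularity equation runs over rooted connected components, and since $X$ is connected the whole orbit $Ga$ contributes exactly one term, represented by $a$; no sum over orbit vertices and no automorphism transport is needed, and then Lemma \ref{birooted_equiv_to_stabilizer} gives the count $|G_a b|$ at once. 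Second, in the converse you restrict to \emph{bounded} $f$ and invoke absolute convergence and dominated convergence, but unimodularity quantifies over all nonnegative functions supported on $\BRcc(X)$, bounded or not. Boundedness is unnecessary: writing $f=\sum_{[X,a,b]\in\BRcc(X)} f[X,a,b]\,\chi_{(a,b)}$ and interchanging the countable sums is legitimate purely because every term is nonnegative (both sides are then equal in $[0,\infty]$), which is how the paper argues; as stated, your argument leaves unbounded $f$ uncovered, a gap you would have to close by truncation and monotone convergence or simply by dropping the boundedness hypothesis.

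Third, the second (``only if'') statement is not actually established in your write-up. You rightly abandon the attempt to obtain it by summing the stabilizer identity over $G_a$-orbit representatives --- that route does break down, since distinct $G_a$-orbits inside $Gb\cap N(a)$ correspond to distinct birooted types whose stabilizer-orbit sizes are unrelated --- and you then name the correct test function, the characteristic function of $\{[X,x,y]\in\dGr : x\in Ga,\ y\in Gb\}$, but you defer the ``bookkeeping'' as the main anticipated obstacle. There is no obstacle: with this $f$, only the components $[X,a]$ and $[X,b]$ contribute, the inner sum on one side is $\sum_{y\in N(a)} f[X,a,y]=|Gb\cap N(a)|$ by definition of $f$, and on the other side $\sum_{y\in N(b)} f[X,y,b]=|Ga\cap N(b)|$; no orbit--stabilizer correspondence is needed, and this direct computation is exactly how the paper concludes. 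So the skeleton matches the paper, but the orbit identity must be computed rather than anticipated, and the two earlier points corrected.
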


\begin{proof}
Let $a$ and $b$ be adjacent vertices of $X$. Denote by $f$ the characteristic function of $\{[X,a,b]\}$. Observe that $f[X,x,y] = 0$ if $[X,x] \neq [X,a]$ or $[X,y] \neq [X,b]$ for all $[X,x,y] \in \dGr$. Then
\begin{align*}
\sum_{[X,x] \in \Rcc(X)} \sum_{y \in N(x)} f[X,x,y] \cdot \mu[X,x] &= \sum_{y \in N(a)} f[X,a,y] \cdot \mu[X,a]\\
                                                             &= \sum_{y \in G_a b} 1 \cdot \mu[X,a]\\
                                                             &= |G_a b|\mu[X,a]
\end{align*}
where $f[X,a,y] = 1$ if and only if $y \in G_a b$ by Lemma \ref{birooted_equiv_to_stabilizer} for all $y \in N(a)$. Similarly,
\[
\sum_{[X,x] \in \Rcc(X)} \sum_{y \in N(x)} f[X,y,x] \cdot \mu[X,x] = |G_b a|\mu[X,b].
\]
If $\mu$ is unimodular, it follows that $|G_a b|\mu[X,a] = |G_b a|\mu[X,b]$.

Conversely, assume that $|G_a b|\mu[X,a] = |G_b a|\mu[X,b]$ for all adjacent vertices $a$ and $b$ of $X$. Then
\[
\sum_{[X,x] \in \Rcc(X)} \sum_{y \in N(x)} \chi_{(a,b)}[X,x,y] \cdot \mu[X,x] = \sum_{[X,x] \in \Rcc(X)} \sum_{y \in N(x)} \chi_{(a,b)}[X,y,x] \cdot \mu[X,x] \tag{$\star$}
\]
holds where $\chi_{(a,b)}$ is the characteristic function of the singleton $\{[X,a,b]\}$ for all adjacent vertices $a$ and $b$. To see that $\mu$ is unimodular, let $f$ be a nonnegative function on $\dGr$ with $f(\dGr \setminus \BRcc(X)) = \{0\}$. It is easy to verify that
\[
f = \sum_{[X,a,b] \in \BRcc(X)} f[X,a,b] \cdot \chi_{(a,b)}.
\]
Since $X$ is countable, so are $\Rcc(X)$ and $\BRcc(X)$. Then
\begin{align*}
\sum_{[X,x] \in \Rcc(X)} &\sum_{y \in N(x)} f[X,x,y] \cdot \mu[X,x]\\
                         &= \sum_{[X,x] \in \Rcc(X)} \sum_{y \in N(x)} \left(\sum_{[X,a,b] \in \BRcc(X)} f[X,a,b] \cdot \chi_{(a,b)}[X,x,y]\right) \mu[X,x]\\
                         &= \sum_{[X,a,b] \in \BRcc(X)} f[X,a,b] \left(\sum_{[X,x] \in \Rcc(X)} \sum_{y \in N(x)} \chi_{(a,b)}[X,x,y] \cdot \mu[X,x]\right)
\end{align*}
where interchanging the summations is possible because the terms are nonnegative. Similarly,
\begin{align*}
\sum_{[X,x] \in \Rcc(X)} &\sum_{y \in N(x)} f[X,y,x] \cdot \mu[X,x]\\
                         &= \sum_{[X,a,b] \in \BRcc(X)} f[X,a,b] \left(\sum_{[X,x] \in \Rcc(X)} \sum_{y \in N(x)} \chi_{(a,b)}[X,y,x] \cdot \mu[X,x]\right).
\end{align*}
By ($\star$), it follows that
\[
\sum_{[X,x] \in \Rcc(X)} \sum_{y \in N(x)} f[X,x,y] \cdot \mu[X,x] = \sum_{[X,x] \in \Rcc(X)} \sum_{y \in N(x)} f[X,y,x] \cdot \mu[X,x],
\]
and so $\mu$ is unimodular, as required.

To demonstrate the second part of the proposition, let $a$ and $b$ be adjacent vertices of $X$. Denote by $f$ the characteristic function of the set
\[
\{[X,x,y] \in \dGr ~:~ x \in Ga ~\text{ and }~ y \in Gb\}.
\]
Observe that $f[X,x,y] = 0$ if $[X,x] \neq [X,a]$ or $[X,y] \neq [X,b]$ for all $[X,x,y] \in \dGr$. Furthermore, $f[X,a,y] = 1$ if and only if $y \in Gb \cap N(a)$. Then
\begin{align*}
\sum_{[X,x] \in \Rcc(X)} \sum_{y \in N(x)} f[X,x,y] \cdot \mu[X,x] &= \sum_{y \in N(a)} f[X,a,y] \cdot \mu[X,a]\\
                                                             &= \sum_{y \in Gb \cap N(a)} 1 \cdot \mu[X,a]\\
                                                             &= |Gb \cap N(a)|\mu[X,a].
\end{align*}
Similarly,
\[
\sum_{[X,x] \in \Rcc(X)} \sum_{y \in N(x)} f[X,y,x] \cdot \mu[X,x] = |Ga \cap N(b)|\mu[X,b].
\]
If $\mu$ is unimodular, it follows that $|Gb \cap N(a)|\mu[X,a] = |Ga \cap N(b)|\mu[X,b]$.
\end{proof}

\begin{proof}[Proof of Theorem \ref{uniqueness_of_sustained_unimodular_measures}]
By Proposition \ref{criterion_for_unimodularity},
\[
|G_x y|\mu[X,x] = |G_y x|\mu[X,y]
\]
and
\[
|G_x y|\nu[X,x] = |G_y x|\nu[X,y]
\]
for all adjacent vertices $x$ and $y$ of $X$. Let $[X,1] \in \Rcc(X)$ be a rooted connected component of $X$, and let $x \in V(X)$. Since $X$ is connected, there is a path $(x_0,x_1,\ldots,x_k)$ with $x_0 = 1$ and $x_k = x$ in $X$. Then
\[
\frac{\mu[X,x_{i+1}]}{\mu[X,x_i]} = \frac{\nu[X,x_{i+1}]}{\nu[X,x_i]}
\]
for all $i \in \{0,1,\ldots,k - 1\}$. The product of these terms is
\[
\frac{\mu[X,x_1]}{\mu[X,x_0]} \frac{\mu[X,x_2]}{\mu[X,x_1]} \cdots \frac{\mu[X,x_k]}{\mu[X,x_{k-1}]} = \frac{\nu[X,x_1]}{\nu[X,x_0]} \frac{\nu[X,x_2]}{\nu[X,x_1]} \cdots \frac{\nu[X,x_k]}{\nu[X,x_{k-1}]}.
\]
That is,
\[
\frac{\mu[X,x_k]}{\mu[X,x_0]} = \frac{\nu[X,x_k]}{\nu[X,x_0]},
\]
which means $\mu[X,x]\nu[X,1] = \nu[X,x]\mu[X,1]$. It follows that
\[
\mu[X,1] = \left(\sum_{[X,x] \in \Rcc(X)} \nu[X,x]\right) \mu[X,1] = \left(\sum_{[X,x] \in \Rcc(X)} \mu[X,x]\right) \nu[X,1] = \nu[X,1]
\]
because $\mu$ and $\nu$ are probability measures sustained by $X$. Hence $\mu[X,1] = \nu[X,1]$, and so $\mu[X,x] = \nu[X,x]$.
\end{proof}

%
%
\subsection{Existence of a sustained unimodular measure}

Recall that $G = \Aut(X)$ and $G_x$ is the stabilizer subgroup of $x \in V(X)$ where $X$ is a graph. So far, we have shown that a connected graph sustains at most one unimodular measure. It is natural to ask under what conditions such a measure exists. According to Proposition \ref{criterion_for_unimodularity}, a sustained unimodular measure must satisfy the equation $|G_a b|\mu[X,a] = |G_b a|\mu[X,b]$ for all adjacent vertices $a$ and $b$ of $X$. Based on this, we see that the measure of $[X,a]$ is just a multiple of the measure of $[X,b]$, which leads us to the following observation.

%
%
\begin{framed}
\begin{theo}\label{existence_of_sustained_unimodular_measures}
Let $X$ be a connected graph; let $[X,1] \in \Rcc(X)$; let $p$ be a positive real number. Define a function $\mu : \Rcc(X) \to \RR$ as follows: $\mu[X,1] = p$, and for all $[X,x] \in \Rcc(X) \setminus \{[X,1]\}$,
\[
\mu[X,x] = \frac{|G_{x_0}x_1| |G_{x_1}x_2| \cdots |G_{x_{k-1}}x_k|}{|G_{x_1}x_0| |G_{x_2}x_1| \cdots |G_{x_k}x_{k-1}|} p
\]
where $(x_0,x_1,\ldots,x_k)$ is a path in $X$ with $x_0 = 1$ and $x_k = x$. Then the following statements hold.

\begin{enumerate}
\item The function $\mu$ is independent of the choice of path.

\item If $\mu(\Rcc(X)) = 1$, then $\mu$ is the unique unimodular measure sustained by $X$.
\end{enumerate}
\end{theo}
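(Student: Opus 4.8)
The plan is to handle the two parts in order, since part~(2) depends on part~(1). For part~(1), fix a vertex $x$ and suppose $(x_0,\dots,x_k)$ and $(y_0,\dots,y_m)$ are two paths from $1$ to $x$. I want to show the two products agree. The clean way is to first establish a local invariance: if $(z_0,\dots,z_\ell)$ is a \emph{closed} walk (so $z_0 = z_\ell$), then the product $\prod_{i} |G_{z_i}z_{i+1}| / |G_{z_{i+1}}z_i|$ equals $1$. Given this, the two paths from $1$ to $x$ can be concatenated (one reversed) into a closed walk at $1$, and reversing a path inverts the associated product, so the two products must be equal. To prove the closed-walk statement, note that any closed walk in a graph is a product of "back-and-forth" moves along single edges (traversing an edge and immediately returning) together with cyclic rotations — more precisely, one can induct on the length of the closed walk: a closed walk of length $\ge 1$ either backtracks somewhere (remove the backtrack $z_{i-1}, z_i, z_{i-1}$, which contributes a factor $|G_{z_{i-1}}z_i|/|G_{z_i}z_{i-1}| \cdot |G_{z_i}z_{i-1}|/|G_{z_{i-1}}z_i| = 1$) or is a genuine cycle. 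For a genuine cycle we still need the product to be $1$; here the key algebraic fact is that for adjacent $a,b$ the orbit-stabilizer relation gives $|G_a b| \cdot |G_a| = |G_a|$-orbit count, but more usefully, along any path the telescoping structure forces consistency. Actually the slickest route: define tentatively $\mu[X,x]$ by one chosen spanning tree of paths from $1$, observe (via Proposition~\ref{criterion_for_unimodularity} read backwards) that the resulting function satisfies $|G_a b|\mu[X,a] = |G_b a|\mu[X,b]$ on tree edges by construction, and then check the same identity holds on \emph{non-tree} edges too — this last check is exactly what makes the path-independence work, and it follows because $|G_a b|$ and $|G_b a|$ are both determined by the isomorphism type $[X,a,b]$, together with a counting argument relating $|G_a b|, |G_b a|$ and $|G_a|, |G_b|$.

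For part~(2), assume $\mu(\Rcc(X)) = 1$. First, $\mu$ is genuinely a probability measure on $\Gr$: it is nonnegative (each factor $|G_{x_i}x_{i+1}|$ is a positive integer), it is sustained by $X$ by construction (support inside $\Rcc(X)$), and it sums to $1$ by hypothesis; countability of $\Rcc(X)$ makes $\sigma$-additivity automatic. Then I verify unimodularity via Proposition~\ref{criterion_for_unimodularity}: I must show $|G_a b|\mu[X,a] = |G_b a|\mu[X,b]$ for every pair of adjacent vertices $a,b$. If the edge $ab$ lies on one of the chosen defining paths this is immediate from the recursive definition (appending $b$ to a path ending at $a$ multiplies by exactly $|G_a b|/|G_b a|$). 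For a general edge $ab$, take a path $(x_0,\dots,x_k)$ from $1$ to $a$ and extend it by $b$; by part~(1), $\mu[X,b]$ computed along this extended path equals $\mu[X,b]$, and the ratio $\mu[X,b]/\mu[X,a]$ is precisely $|G_a b|/|G_b a|$, which is the desired identity. Finally, uniqueness is handed to us directly by Theorem~\ref{uniqueness_of_sustained_unimodular_measures}: any unimodular measure sustained by the connected graph $X$ equals $\mu$.

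I expect the main obstacle to be part~(1), and specifically the non-tree-edge / cycle consistency: showing that the product around any cycle is $1$ is not purely formal and needs the interaction between the quantities $|G_a b|$ (size of the $G_a$-orbit of $b$) along a loop. The cleanest tool is likely Proposition~\ref{criterion_for_unimodularity} itself run in reverse — we already \emph{know} from the examples and from the uniqueness theorem that a consistent assignment exists whenever a sustained unimodular measure exists, but to prove path-independence unconditionally (i.e., without presupposing $\mu$ is a measure) one needs an intrinsic argument. I would reduce a general cycle to a sum of triangular/quadrilateral faces only if $X$ were planar, which it need not be; instead I would argue directly that for any closed walk, grouping consecutive edges and repeatedly using that $|G_{x}y|/|G_{y}x|$ depends only on $[X,x,y]$ lets one collapse the walk, with backtracks contributing trivial factors — this bookkeeping is the delicate part and where I would spend the most care.
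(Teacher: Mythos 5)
Your part~(2) is fine and matches the paper: once path-independence is known, extending a path ending at $a$ by the edge to $b$ gives $|G_a b|\mu[X,a] = |G_b a|\mu[X,b]$, so Proposition~\ref{criterion_for_unimodularity} yields unimodularity and Theorem~\ref{uniqueness_of_sustained_unimodular_measures} yields uniqueness. The problem is part~(1). Everything you write there reduces the question to the single claim that the product $\prod_i |G_{z_i}z_{i+1}|/|G_{z_{i+1}}z_i|$ around a closed walk equals $1$ (equivalently, consistency on non-tree edges), and you never actually prove that claim: ``the telescoping structure forces consistency'' is circular, since telescoping requires knowing in advance that $|G_a b|/|G_b a|$ has the form $h(a)/h(b)$ for some function $h$ of the vertex alone, which is exactly what is at stake; and the appeal to ``a counting argument relating $|G_a b|, |G_b a|$ and $|G_a|, |G_b|$'' is where the gap sits. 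If the stabilizers were finite, orbit--stabilizer would give $|G_a b| = [G_a : G_a \cap G_b]$ and hence $|G_a b|/|G_b a| = |G_a|/|G_b|$, and the cycle product would telescope to $1$; but for an infinite connected graph the stabilizers $G_a \leq \Aut(X)$ are typically infinite (e.g.\ in the $3$-regular tree), so this count is meaningless as stated, and your sketch supplies no substitute.

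The paper closes exactly this hole with a nontrivial input you do not have: $\Aut(X)$ carries a left Haar measure $m$, the stabilizers $G_a$ are compact subgroups, and Lemma~\ref{ratio_of_haar_measures} shows $|G_a b| \cdot m(G_a \cap G_b) = m(G_a)$ by decomposing $G_a$ into the finitely many left translates of $G_a \cap G_b$ indexed by $G_a b$. This gives $|G_a b|/|G_b a| = m(G_a)/m(G_b)$, which is precisely the ``coboundary'' form needed, and then Lemma~\ref{walk_equiv_reverse_walk} makes the product around any closed walk collapse to $1$; concatenating one path with the reverse of the other finishes part~(1). (Your backtrack-removal observation is true but does not reach genuine cycles, which is the only hard case.) So the proposal is not a complete proof: the step you yourself flag as ``the delicate part'' is the entire mathematical content of part~(1), and it needs the Haar-measure (or an equivalent relative-index) argument, not just bookkeeping with walks.
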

\end{framed}

The validity of Theorem \ref{existence_of_sustained_unimodular_measures} rests entirely on the well-definedness of the function $\mu$. Indeed, there may be several distinct paths between two vertices. However, the following lemmas are sufficient to see that $\mu$ must be well-defined. The idea is to use the Haar measure on $G$, as suggested in a preprint by Vadim Kaimanovich \cite{kaimanovich13}.

\begin{lem}\label{ratio_of_haar_measures}
Let $G = \Aut(X)$ for some connected graph $X$. Denote by $m$ the left Haar measure on $G$. If $a$ and $b$ are adjacent vertices of $X$, then $|G_a b| \cdot m(G_a \cap G_b) = m(G_a)$.
\end{lem}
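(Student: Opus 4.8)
The plan is to reduce the identity $|G_a b|\cdot m(G_a\cap G_b)=m(G_a)$ to the orbit–stabilizer phenomenon on the compact group $G_a$. Here $m$ is the left Haar measure on $G=\Aut(X)$, which carries its natural Polish topology (pointwise convergence on $V(X)$, with $\deg_X\le\Delta$ guaranteeing each vertex orbit is finite); because the paper is working with $m$ as a left Haar measure, I will take for granted that $G$ — and hence the closed subgroup $G_a$ — is compact, so that $m$ restricted to $G_a$ is, up to normalization, the Haar measure of $G_a$, and that $G_a\cap G_b$ is an open (finite-index) subgroup of $G_a$ since $b$ has finitely many $G_a$-translates.

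First I would observe that $G_a\cap G_b=(G_a)_b$, the stabilizer of the vertex $b$ inside the group $G_a$. The orbit of $b$ under $G_a$ is exactly the finite set $G_a b$, and the map $G_a/(G_a\cap G_b)\to G_a b$ sending a coset to its action on $b$ is a bijection; consequently $[G_a:G_a\cap G_b]=|G_a b|$. Second, I would invoke the standard fact that left Haar measure on a compact (or just unimodular, locally compact) group restricts, on an open subgroup $H\le K$, so that $m(K)=[K:H]\,m(H)$: the coset decomposition $K=\bigsqcup_{i=1}^{[K:H]} g_iH$ together with left-invariance of $m$ gives $m(K)=\sum_i m(g_iH)=[K:H]\,m(H)$. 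Applying this with $K=G_a$ and $H=G_a\cap G_b$ yields $m(G_a)=|G_a b|\cdot m(G_a\cap G_b)$, which is precisely the claim.

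The main obstacle is not the algebra — orbit–stabilizer and the coset-counting argument for Haar measure are routine — but rather pinning down the topological hypotheses so that "$m$ the left Haar measure on $G$" is even meaningful and so that $G_a\cap G_b$ is genuinely an \emph{open} subgroup of $G_a$ (otherwise the coset sum need not be finite and the identity could degenerate). I would handle this by noting that $\Gr$ is compact, that the bounded-degree condition forces $G_ab$ to be finite for adjacent $a,b$ (indeed $|G_ab|\le\deg_X(a)\le\Delta$ via Lemma \ref{birooted_equiv_to_stabilizer}, since distinct elements of $G_ab$ give distinct birooted graphs $[X,a,y]$ with $y\in N(a)$), and hence $G_a\cap G_b$ has index at most $\Delta$ in $G_a$; a finite-index closed subgroup of a compact group is open, so the coset argument applies verbatim. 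With these remarks in place the proof is a two-line computation.
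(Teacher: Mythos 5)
Your proposal is correct and follows essentially the same route as the paper: the cosets of $G_a \cap G_b$ in $G_a$ are put in bijection with the orbit $G_a b$ (the paper phrases this via the equivalence relation $\alpha \sim \beta \Leftrightarrow \alpha(b) = \beta(b)$, whose classes are exactly the left translates $\alpha\cdot(G_a\cap G_b)$), and then left-invariance plus additivity of $m$ over the finite disjoint coset decomposition yields $m(G_a) = |G_a b|\cdot m(G_a \cap G_b)$. One caveat: your opening assumption that $G=\Aut(X)$ is compact is false in general (e.g.\ for the $3$-regular tree), and the paper only uses that the stabilizers $G_a$, $G_b$ are compact subgroups (citing Woess); happily your actual computation never needs compactness of $G$ --- only left-invariance of $m$ and the finiteness $|G_a b|\le \deg_X(a)\le \Delta$ --- so the stray hypothesis is harmless rather than a gap.
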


\begin{proof}
Let $a$ and $b$ be adjacent vertices of $X$. It is known that the stabilizers $G_a$ and $G_b$ are compact subgroups of $G$, as shown, for example, by Woess \cite{woess91}.

Define the equivalence relation $\sim$ on $G_a$ as follows: $\alpha \sim \beta$ if $\alpha(b) = \beta(b)$ for all $\alpha, \beta \in G_a$. Clearly, $[1] = G_a \cap G_b$ where $1 \in G$ is the identity map, and $[1]$ is its equivalence class. In addition, it is easy to see that $\alpha \cdot [1] = [\alpha]$ for all $\alpha \in G_a$. Furthermore, the function $f : G_a/{\sim} \to G_a b$ defined on the quotient set by $f[\alpha] = \alpha(b)$ is a well-defined bijection, which means $|G_a/{\sim}| = |G_a b|$. Using the left-translation-invariance and additivity of the Haar measure $m$, it follows that
\[
m(G_a) = m\left(\bigcup_{[\alpha] \in G_a/{\sim}} \alpha \cdot [1]\right) = \sum_{[\alpha] \in G_a/{\sim}} m(\alpha \cdot (G_a \cap G_b)) = |G_a b| \cdot m(G_a \cap G_b)
\]
where the second equality holds because the union is disjoint.
\end{proof}

\begin{lem}\label{walk_equiv_reverse_walk}
Let $X$ be a connected graph. Suppose that $(x_0,x_1,\ldots,x_k,x_0)$ is a walk in $X$. Then
\[
|G_{x_0}x_1| |G_{x_1}x_2| \cdots |G_{x_{k-1}}x_k| |G_{x_k}x_0| = |G_{x_1}x_0| |G_{x_2}x_1| \cdots |G_{x_k}x_{k-1}| |G_{x_0}x_k|.
\]
\end{lem}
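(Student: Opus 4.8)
The plan is to reduce the cyclic-walk identity to a telescoping product of the per-edge relation supplied by Lemma~\ref{ratio_of_haar_measures}. First I would fix the left Haar measure $m$ on $G = \Aut(X)$, which is legitimate because $G$ acts on a connected, locally finite graph, so $G$ is a locally compact (indeed, second countable) group and the vertex stabilizers are compact open subgroups. For each consecutive pair $x_i, x_{i+1}$ along the walk $(x_0, x_1, \ldots, x_k, x_0)$ — where indices are read so that $x_{k+1} = x_0$ — Lemma~\ref{ratio_of_haar_measures} gives
\[
|G_{x_i} x_{i+1}| \cdot m(G_{x_i} \cap G_{x_{i+1}}) = m(G_{x_i}),
\]
and, applying the same lemma with the roles of the two adjacent vertices swapped,
\[
|G_{x_{i+1}} x_i| \cdot m(G_{x_{i+1}} \cap G_{x_i}) = m(G_{x_{i+1}}).
\]
Since $G_{x_i} \cap G_{x_{i+1}} = G_{x_{i+1}} \cap G_{x_i}$ and this group is compact (hence has finite, positive Haar measure), I can divide to obtain the key edge identity
\[
\frac{|G_{x_i} x_{i+1}|}{|G_{x_{i+1}} x_i|} = \frac{m(G_{x_i})}{m(G_{x_{i+1}})}.
\]

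Next I would multiply this identity over $i = 0, 1, \ldots, k$ (with $x_{k+1} = x_0$). On the right-hand side the product telescopes around the cycle to
\[
\prod_{i=0}^{k} \frac{m(G_{x_i})}{m(G_{x_{i+1}})} = \frac{m(G_{x_0})}{m(G_{x_0})} = 1,
\]
since every $m(G_{x_j})$ appears once in a numerator and once in a denominator. Hence
\[
\prod_{i=0}^{k} \frac{|G_{x_i} x_{i+1}|}{|G_{x_{i+1}} x_i|} = 1,
\]
and clearing denominators — permissible because each orbit $G_{x_i} x_{i+1}$ is finite and nonempty, so each factor is a positive integer — yields exactly
\[
|G_{x_0}x_1| |G_{x_1}x_2| \cdots |G_{x_{k-1}}x_k| |G_{x_k}x_0| = |G_{x_1}x_0| |G_{x_2}x_1| \cdots |G_{x_k}x_{k-1}| |G_{x_0}x_k|,
\]
which is the claimed equality.

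The only genuinely delicate point is making sure all the quantities involved are finite and positive so that the divisions and the clearing of denominators are valid: the orbits $G_{x_i} x_{i+1}$ are finite because $X$ is locally finite (each lies inside the neighbourhood of $x_i$, which has at most $\Delta$ elements), and $m(G_{x_i} \cap G_{x_{i+1}})$ is finite and positive because that intersection is a compact open subgroup. Everything else is the bookkeeping of a telescoping product around a closed walk, so the argument is short once Lemma~\ref{ratio_of_haar_measures} is in hand. I would also note in passing that the statement immediately implies the path-independence claim in Theorem~\ref{existence_of_sustained_unimodular_measures}: given two paths from $1$ to $x$, concatenating one with the reverse of the other produces a closed walk, and this lemma forces the two path-products to agree.
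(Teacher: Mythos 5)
Your proposal is correct and follows essentially the same route as the paper: both derive the per-edge identity $|G_a b|/|G_b a| = m(G_a)/m(G_b)$ from Lemma~\ref{ratio_of_haar_measures} and then telescope the product around the closed walk to $1$. Your additional remarks on finiteness and positivity (compact stabilizers, orbits bounded by $\Delta$) are sound but just make explicit what the paper leaves implicit.
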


\begin{proof}
According to Lemma \ref{ratio_of_haar_measures},
\[
\frac{|G_a b|}{|G_b a|} = \frac{m(G_a)}{m(G_a \cap G_b)} \cdot \frac{m(G_b \cap G_a)}{m(G_b)} = \frac{m(G_a)}{m(G_b)}
\]
for all adjacent vertices $a$ and $b$ of $X$ where $m$ is the left Haar measure on $G$, and so
\[
\frac{|G_{x_0}x_1|}{|G_{x_1}x_0|} \frac{|G_{x_1}x_2|}{|G_{x_2}x_1|} \cdots \frac{|G_{x_{k-1}}x_k|}{|G_{x_k}x_{k-1}|} \frac{|G_{x_k}x_0|}{|G_{x_0}x_k|} = \frac{m(G_{x_0})}{m(G_{x_1})} \frac{m(G_{x_1})}{m(G_{x_2})} \cdots \frac{m(G_{x_{k-1}})}{m(G_{x_k})} \frac{m(G_{x_k})}{m(G_{x_0})} = 1,
\]
as required.
\end{proof}

\begin{figure}[ht]
\begin{center}
\includegraphics[scale=1.2]{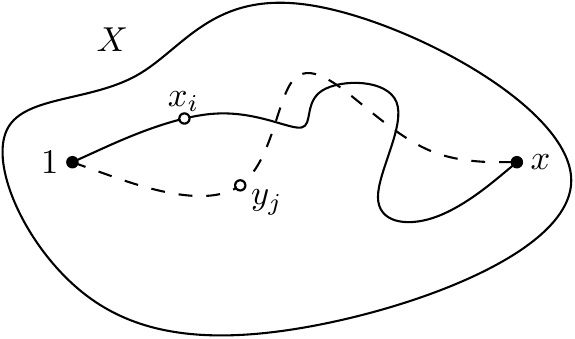}
\caption[The construction of a unimodular measure is independent of the path]{The construction of a unimodular measure is independent of the path; $(x_0,x_1,\ldots,x_k)$ and $(y_0,y_1,\ldots,y_l)$ are represented by a solid line and a dashed line, respectively.}
\label{independent_of_path}
\end{center}
\end{figure}

\begin{proof}[Proof of Theorem \ref{existence_of_sustained_unimodular_measures}]
Let $X$ be a connected graph, and let $[X,1] \in \Rcc(X)$. To see that the function $\mu$ is independent of the choice of path, assume that $(y_0,y_1,\ldots,y_l)$ is another path in $X$ with $y_0 = 1$ and $y_l = x$. As shown in Figure \ref{independent_of_path},
\[
(x_0,x_1,\ldots,x_k,y_{l-1},\ldots,y_1,y_0)
\]
is a walk in $X$ with $x_0 = y_0$ and $x_k = y_l$. By Lemma \ref{walk_equiv_reverse_walk}, it is easy to deduce that
\[
\frac{|G_{x_0}x_1| |G_{x_1}x_2| \cdots |G_{x_{k-1}}x_k|}{|G_{x_1}x_0| |G_{x_2}x_1| \cdots |G_{x_k}x_{k-1}|} = \frac{|G_{y_0}y_1| |G_{y_1}y_2| \cdots |G_{y_{l-1}}y_l|}{|G_{y_1}y_0| |G_{y_2}y_1| \cdots |G_{y_l}y_{l-1}|},
\]
which means $\mu$ is well-defined.

It remains to demonstrate that $\mu$ is unimodular. Suppose that $a$ and $b$ are adjacent vertices of $X$. Without loss of generality, assume that
\[
\mu[X,a] = \frac{|G_{x_0}x_1| |G_{x_1}x_2| \cdots |G_{x_k} a|}{|G_{x_1}x_0| |G_{x_2}x_1| \cdots |G_a x_k|} p
\]
and
\[
\mu[X,b] = \frac{|G_{x_0}x_1| |G_{x_1}x_2| \cdots |G_{x_k} a| |G_a b|}{|G_{x_1}x_0| |G_{x_2}x_1| \cdots |G_a x_k| |G_b a|} p
\]
where $(x_0,x_1,\ldots,x_k,a)$ and $(x_0,x_1,\ldots,x_k,a,b)$ are paths in $X$ with $x_0 = 1$. Thus $|G_b a|\mu[X,b] = |G_a b|\mu[X,a]$, and so $\mu$ is unimodular according to Proposition \ref{criterion_for_unimodularity}.
\end{proof}

%
%
\subsection{The barred binary tree is lawless}

\begin{figure}[ht]
\begin{center}
\includegraphics[scale=1.2]{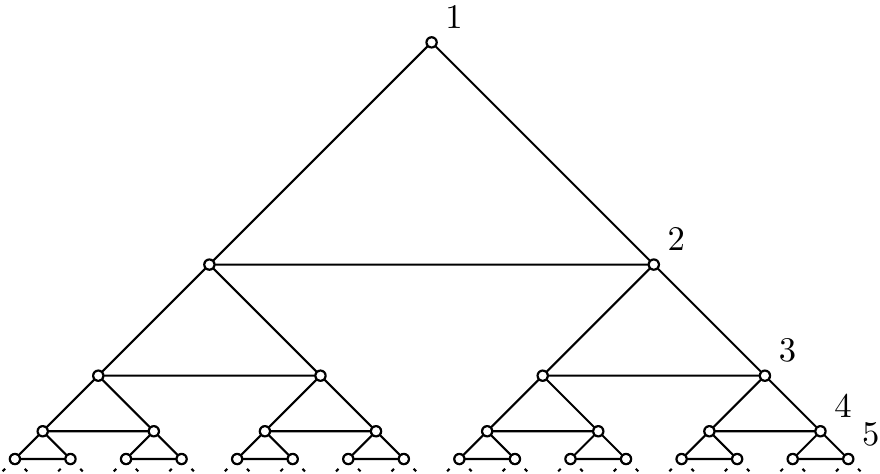}
\caption[The barred binary tree]{The barred binary tree; its orbits are labelled using the positive integers.}
\label{labelled_barred_binary_tree}
\end{center}
\end{figure}

The following example should convince the reader that a connected graph need not sustain a unimodular measure, even if it is not rigid. Consider the barred binary tree $X = \bar\Lambda$. For the benefit of the reader, its orbits are labelled using the positive integers, and the resulting tree is shown in Figure \ref{labelled_barred_binary_tree}. Note that $|G_k (k + 1)| = 2$ and $|G_{k+1} k| = 1$ for all positive integers $k$. Suppose that $X$ sustains a unimodular measure $\mu$. Then Proposition \ref{criterion_for_unimodularity} tells us that the equation
\[
2\mu[X,k] - \mu[X,k + 1] = 0
\]
is true for all positive integers $k$. It follows that $\mu[X,k] = 2^{k-1} \mu[X,1]$, and so
\[
\sum_{k=1}^\infty \mu[X,k] = \mu[X,1] \cdot \sum_{k=1}^\infty 2^{k-1}
\]
where the right-hand side is unbounded unless $\mu[X,1] = 0$. In either case, $\mu(\Gr) \neq 1$, which is a contradiction.

%
%
\subsection{Applications}

To demonstrate the usefulness of Proposition \ref{criterion_for_unimodularity}, our so-called unimodularity criterion, we discuss several results that apply this fact. To begin, it is possible to extend Proposition \ref{infinite_connected_rigid_graphs_not_judicial}, which states that infinite connected rigid graphs are lawless.

\begin{lem}\label{orbit_neighbourhood_ratio}
Suppose that $X$ is a connected graph whose orbits under the action of $G = \Aut(X)$ are all finite. Then $|Gx||Gy \cap N(x)| = |Gy||Gx \cap N(y)|$ for all adjacent vertices $x$ and $y$ of $X$.
\end{lem}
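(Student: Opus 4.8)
The plan is to use a double-counting argument on the set of edges between the orbits $Gx$ and $Gy$. Fix adjacent vertices $x$ and $y$. Let $E(Gx, Gy)$ denote the set of (unordered) edges $\{u,v\}$ of $X$ with $u \in Gx$ and $v \in Gy$; since $x$ and $y$ are adjacent, this set is nonempty, and since $Gx$ and $Gy$ are finite, it is finite as well. (If $Gx = Gy$ one must be slightly careful about how edges are counted, but the graphs here are simple and $x \sim y$ forces $x \neq y$, so an edge inside a single orbit still has well-defined distinct endpoints; the counting below goes through by counting ordered incidences.) I would count the ordered incidences
\[
I = \#\{(u,v) : u \in Gx,\ v \in Gy,\ \{u,v\} \in E(X)\}
\]
in two ways: summing over $u \in Gx$ first gives $I = \sum_{u \in Gx} |Gy \cap N(u)|$, and summing over $v \in Gy$ first gives $I = \sum_{v \in Gy} |Gx \cap N(v)|$.

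The key step is then the observation that $G$ acts transitively on $Gx$, and that this action preserves the quantity $|Gy \cap N(\cdot)|$: if $\varphi \in G$ sends $u$ to $u'$, then $\varphi$ restricts to a bijection $Gy \cap N(u) \to Gy \cap N(u')$, because $\varphi$ maps neighbours to neighbours and maps $Gy$ onto $Gy$. Hence $|Gy \cap N(u)|$ is constant over $u \in Gx$, equal to $|Gy \cap N(x)|$. Therefore $I = |Gx| \cdot |Gy \cap N(x)|$. By the symmetric argument applied to $Gy$, $I = |Gy| \cdot |Gx \cap N(y)|$. Equating the two expressions for $I$ gives $|Gx||Gy \cap N(x)| = |Gy||Gx \cap N(y)|$, as claimed.

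The main obstacle I anticipate is purely bookkeeping rather than conceptual: making sure the finiteness hypothesis is actually used where needed (it guarantees $I < \infty$ so that the two expansions of $I$ are legitimately equal), and handling cleanly the degenerate case $Gx = Gy$ so that no edge is accidentally counted with the wrong multiplicity. Working with \emph{ordered} incidences $(u,v)$ rather than unordered edges sidesteps the latter issue entirely, so I would phrase the whole argument that way from the start. Everything else — the transitivity of the $G$-action on an orbit and the fact that automorphisms carry $Gy \cap N(u)$ bijectively to $Gy \cap N(\varphi(u))$ — is immediate from the definitions already recalled in the Preliminaries.
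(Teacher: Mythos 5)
Your proposal is correct and is essentially the paper's own argument: double-count the edges (incidences) between the orbits $Gx$ and $Gy$, using the fact that $|Gy \cap N(u)|$ is constant as $u$ ranges over $Gx$ (and symmetrically), which the paper also uses. The only cosmetic difference is that you count ordered incidences, which absorbs the $Gx = Gy$ case that the paper disposes of separately as trivial.
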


\begin{proof}
If $Gx = Gy$, then the result holds. On the other hand, assume that $Gx \neq Gy$. Let $Gx = \{x_1,x_2,\ldots,x_k\}$, and let $Gy = \{y_1,y_2,\ldots,y_l\}$ with $x_1 = x$ and $y_1 = y$. Clearly,
\[
|Gy \cap N(x)| = |Gy \cap N(x_i)|
\]
for all $i \in \{1,2,\ldots,k\}$, and
\[
|Gx \cap N(y)| = |Gx \cap N(y_j)|
\]
for all $j \in \{1,2,\ldots,l\}$. Then the number of edges $\{a,b\}$ of $X$ where $a \in Gx$ and $b \in Gy$ may be counted in two ways:
\[
\sum_{i=1}^k |Gy \cap N(x_i)| = \sum_{i=1}^k |Gy \cap N(x)| = |Gx||Gy \cap N(x)|
\]
and
\[
\sum_{j=1}^l |Gx \cap N(y_j)| = \sum_{j=1}^l |Gx \cap N(y)| = |Gy||Gx \cap N(y)|,
\]
which means $|Gx||Gy \cap N(x)| = |Gy||Gx \cap N(y)|$.
\end{proof}

Lemma \ref{orbit_neighbourhood_ratio} and Proposition \ref{criterion_for_unimodularity} yield a straightforward proof of the following new observation, which extends Proposition \ref{infinite_connected_rigid_graphs_not_judicial}.

\begin{theo}
If $X$ is an infinite connected graph whose orbits are all finite, then there is no unimodular measure sustained by $X$.
\end{theo}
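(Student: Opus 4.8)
The plan is to combine the two ingredients we have just assembled: the unimodularity criterion (Proposition \ref{criterion_for_unimodularity}) and the edge-counting identity (Lemma \ref{orbit_neighbourhood_ratio}). Suppose for contradiction that $\mu$ is a unimodular measure sustained by $X$. First I would use Proposition \ref{criterion_for_unimodularity} to record that unimodularity forces, for every pair of adjacent vertices $a$ and $b$,
\[
|Gb \cap N(a)|\,\mu[X,a] = |Ga \cap N(b)|\,\mu[X,b].
\]
Next I would invoke Lemma \ref{orbit_neighbourhood_ratio}, which applies precisely because all orbits of $X$ are finite, to get $|Gx|\,|Gy \cap N(x)| = |Gy|\,|Gx \cap N(y)|$ for all adjacent $x$ and $y$. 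Dividing the measure identity by this orbit identity (the quantities $|Gb \cap N(a)|$ and $|Ga \cap N(b)|$ are positive, being at least $1$ since $a$ and $b$ are adjacent) yields
\[
\frac{\mu[X,a]}{|Ga|} = \frac{\mu[X,b]}{|Gb|}
\]
for all adjacent $a,b$.

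From here the argument is a connectedness propagation exactly as in the proof of Theorem \ref{uniqueness_of_sustained_unimodular_measures}: fix a rooted connected component $[X,1] \in \Rcc(X)$ and, given any $x \in V(X)$, choose a path $(x_0,x_1,\ldots,x_k)$ with $x_0 = 1$ and $x_k = x$; telescoping the equalities along consecutive edges gives $\mu[X,x]/|Gx| = \mu[X,1]/|G1|$, hence there is a constant $c \geq 0$ with $\mu[X,x] = c\,|Gx|$ for every $x \in V(X)$. Summing over the rooted connected components, $1 = \mu(\Gr) = \sum_{[X,x]\in\Rcc(X)} \mu[X,x] = c \sum_{[X,x]\in\Rcc(X)} |Gx|$. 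Now I would bring in that $X$ is \emph{infinite}: since every orbit $Gx$ is finite, the partition of the vertex set into orbits must have infinitely many blocks, so $\Rcc(X)$ is infinite by Proposition \ref{same_orbit_same_rcc}; as $|Gx| \geq 1$ for each class, the sum $\sum_{[X,x]\in\Rcc(X)} |Gx|$ diverges. Then $c = 0$, forcing $\mu[X,x] = 0$ for all $x$ and hence $\mu(\Gr) = 0 \neq 1$, a contradiction.

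The only genuinely delicate point is the finiteness/infiniteness bookkeeping in the last paragraph: one must be careful that "all orbits finite" together with "$X$ infinite" really does make $\Rcc(X)$ (equivalently, the set of orbits) infinite — this is just the observation that a countably infinite set cannot be a finite union of finite sets — and that each term $|Gx|$ in the divergent sum is a positive integer. Everything else is a mechanical combination of the cited results, so I do not expect any real obstacle; the proof is essentially a two-line deduction once Lemma \ref{orbit_neighbourhood_ratio} and the second half of Proposition \ref{criterion_for_unimodularity} are in hand, mirroring the structure of the rigid case in Proposition \ref{infinite_connected_rigid_graphs_not_judicial} but with orbit sizes as weights instead of the uniform weight $1$.
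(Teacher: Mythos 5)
Your proposal is correct and follows essentially the same route as the paper: the second identity of Proposition \ref{criterion_for_unimodularity} combined with Lemma \ref{orbit_neighbourhood_ratio} gives $\mu[X,a]/|Ga| = \mu[X,b]/|Gb|$ on edges, connectedness propagates this along paths so that $\mu[X,x]$ is proportional to $|Gx|$, and summing over the (necessarily infinite) set of rooted connected components yields the same divergence contradiction the paper uses. The only cosmetic difference is that you avoid dividing by $\mu[X,b]$, whereas the paper writes the edge relation as a ratio of measures; the substance is identical.
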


\begin{proof}
Suppose that $\mu$ is a unimodular measure sustained by $X$. Let $[X,1] \in \Rcc(X)$. By Proposition \ref{criterion_for_unimodularity} and Lemma \ref{orbit_neighbourhood_ratio},
\[
\frac{\mu[X,a]}{\mu[X,b]} = \frac{|Ga \cap N(b)|}{|Gb \cap N(a)|} = \frac{|Ga|}{|Gb|}
\]
for all adjacent vertices $a$ and $b$ of $X$. Let $[X,x] \in \Rcc(X)$, and let $(x_0,x_1,\ldots,x_k)$ be a path in $X$ with $x_0 = 1$ and $x_k = x$. Then
\[
\frac{\mu[X,x]}{\mu[X,1]} = \frac{\mu[X,x_1]}{\mu[X,x_0]} \frac{\mu[X,x_2]}{\mu[X,x_1]} \cdots \frac{\mu[X,x_k]}{\mu[X,x_{k-1}]} = \frac{|Gx_1|}{|Gx_0|} \frac{|Gx_2|}{|Gx_1|} \cdots \frac{|Gx_k|}{|Gx_{k-1}|} = \frac{|Gx|}{|G1|},
\]
and so $|G1|\mu[X,x] = |Gx|\mu[X,1]$. It follows that
\[
|G1| = \sum_{[X,x] \in \Rcc(X)} |G1|\mu[X,x] = \sum_{[X,x] \in \Rcc(X)} |Gx|\mu[X,1] = |V(X)|\mu[X,1]
\]
because $\mu$ is a probability measure. However, this is a contradiction. Hence there is no unimodular measure sustained by $X$.
\end{proof}

In the results below, we generalize two facts that were proven in this author's Honours project \cite{artemenko11a}.

\begin{prop}\label{sustained_implies_strictly_sustained}
If $\mu$ is a unimodular measure sustained by a connected graph $X$, then $\mu$ is strictly sustained by $X$.
\end{prop}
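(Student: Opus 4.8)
The plan is to argue by contradiction: suppose $\mu$ is unimodular and sustained by the connected graph $X$, but some rooted connected component has measure zero. The key observation is that the unimodularity criterion (Proposition~\ref{criterion_for_unimodularity}) forces $|G_a b|\mu[X,a] = |G_b a|\mu[X,b]$ for all adjacent $a,b$. Since the stabilizer orbit sizes $|G_a b|$ and $|G_b a|$ are positive integers (at least $1$, since $b \in G_a b$ and $a \in G_b a$), this equation shows that $\mu[X,a] = 0$ if and only if $\mu[X,b] = 0$ whenever $a$ and $b$ are adjacent. In other words, the property of having zero measure propagates along edges.

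First I would make precise this ``propagation along edges'' step. Fix $[X,1] \in \Rcc(X)$ with $\mu[X,1] = 0$ (assumed for contradiction), and let $[X,x] \in \Rcc(X)$ be arbitrary. Since $X$ is connected, there is a path $(x_0, x_1, \ldots, x_k)$ in $X$ with $x_0 = 1$ and $x_k = x$. Walking along this path and applying the equivalence $\mu[X,x_i] = 0 \iff \mu[X,x_{i+1}] = 0$ at each step (which follows from Proposition~\ref{criterion_for_unimodularity} together with $|G_{x_i}x_{i+1}|, |G_{x_{i+1}}x_i| \geq 1$), I conclude by induction that $\mu[X,x] = 0$. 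Hence $\mu[X,x] = 0$ for every $[X,x] \in \Rcc(X)$.

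The contradiction then comes from the fact that $\mu$ is a probability measure sustained by $X$: we would have
\[
1 = \mu(\Gr) = \sum_{[X,x] \in \Rcc(X)} \mu[X,x] = 0,
\]
which is absurd. Therefore no rooted connected component can have measure zero, i.e.\ $\mu$ is strictly sustained by $X$.

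I do not expect any serious obstacle here; the only point requiring a little care is confirming that $|G_a b| \geq 1$ and $|G_b a| \geq 1$ so that the biconditional ``$\mu[X,a] = 0 \iff \mu[X,b] = 0$'' genuinely holds in both directions — but this is immediate since $b \in G_a b$ (the identity automorphism fixes $a$ and sends $b$ to $b$) and symmetrically $a \in G_b a$. One should also note that the sum over $\Rcc(X)$ equals $\mu(\Gr)$ precisely because $\mu$ is sustained by $X$, so $\Gr \setminus \Rcc(X)$ is $\mu$-null; this is exactly the definition of sustained. Everything else is the short inductive argument along a connecting path, using connectedness of $X$.
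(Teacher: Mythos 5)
Your argument is correct and is essentially the paper's own proof: both assume a zero-measure component, invoke Proposition \ref{criterion_for_unimodularity} with the observation that the stabilizer-orbit sizes are positive to propagate the value zero to neighbours, and then use connectedness to force $\mu(\Rcc(X)) = 0$, contradicting $\mu(\Gr) = 1$. Your write-up simply spells out the inductive step along a path that the paper leaves implicit.
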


\begin{proof}
Suppose that $\mu$ is a unimodular measure sustained by a connected graph $X$. To derive a contradiction, assume that $\mu$ is not strictly sustained by $X$. That is, $\mu[X,x] = 0$ for some $x \in V(X)$. Let $y \in N(x)$. By Proposition \ref{criterion_for_unimodularity},
\[
|G_x y|\mu[X,x] = |G_y x|\mu[X,y],
\]
and so $\mu[X,y] = 0$ because $|G_y x| \neq 0$. Since $X$ is connected, it follows that $\mu(\Rcc(X)) = 0$; a contradiction.
\end{proof}

\begin{cor}
Let $X$ and $Y$ be connected graphs that sustain unimodular measures $\mu$ and $\nu$, respectively. If $\mu = \nu$, then $X = Y$.
\end{cor}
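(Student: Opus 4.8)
The plan is to leverage the two facts we already have in hand: that a unimodular measure on a connected graph is \emph{strictly} sustained (Proposition~\ref{sustained_implies_strictly_sustained}), and that two connected graphs sharing even a single rooted connected component must be equal (Proposition~\ref{equiv_of_rcc}, specifically the implication $(iii)\Rightarrow(i)$). The whole argument is then a matter of producing one common rooted connected component.

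First I would fix any vertex $x \in V(X)$ and consider $[X_x,x] = [X,x] \in \Rcc(X)$ (here $X_x = X$ since $X$ is connected). Since $\mu$ is unimodular and sustained by the connected graph $X$, Proposition~\ref{sustained_implies_strictly_sustained} gives $\mu[X,x] > 0$. Now invoke the hypothesis $\mu = \nu$, so that $\nu[X,x] > 0$ as well. Because $\nu$ is sustained by $Y$, we have $\nu(\Gr \setminus \Rcc(Y)) = 0$; hence every isomorphism class of positive $\nu$-measure must lie in $\Rcc(Y)$, and in particular $[X,x] \in \Rcc(Y)$.

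At this point $[X,x]$ witnesses $\Rcc(X) \cap \Rcc(Y) \neq \emptyset$, so Proposition~\ref{equiv_of_rcc} yields $X = Y$, completing the proof. There is essentially no obstacle here — the only point requiring a moment's care is the passage from ``$\nu[X,x] > 0$'' to ``$[X,x] \in \Rcc(Y)$'', which is exactly the contrapositive of the definition of ``sustained'' (anything outside $\Rcc(Y)$ has $\nu$-measure zero). I would present this as a short three-line corollary immediately following the proof of Proposition~\ref{sustained_implies_strictly_sustained}, since it reuses that proposition directly and needs nothing further.
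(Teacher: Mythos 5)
Your proof is correct and follows essentially the same route as the paper: both rest on Proposition~\ref{sustained_implies_strictly_sustained} to transfer a positive-measure rooted connected component from one graph to the other and then identify the connected graphs via their rooted connected components. The only (harmless) difference is that you invoke the implication $(iii)\Rightarrow(i)$ of Proposition~\ref{equiv_of_rcc} so that a single common component suffices, whereas the paper establishes both inclusions $\Rcc(X)\subseteq\Rcc(Y)$ and $\Rcc(Y)\subseteq\Rcc(X)$.
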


\begin{proof}
Suppose that $\mu = \nu$. For all $[Y,y] \in \Rcc(Y)$, $\mu[Y,y] = \nu[Y,y] \neq 0$ by Proposition \ref{sustained_implies_strictly_sustained}, and so $[Y,y] \in \Rcc(X)$. Similarly, $\Rcc(X) \subseteq \Rcc(Y)$. It follows that the rooted connected components of $X$ are precisely those of $Y$. Hence $X = Y$.
\end{proof}

%
%
\subsubsection{Cayley graphs}

The criterion for unimodularity presented in Proposition \ref{criterion_for_unimodularity} is truly useful in its own right. It provides a simple method to determine whether a connected vertex-transitive graph is judicial. As previously stated, let $G = \Aut(X)$.

\begin{cor}\label{unimodularity_for_vtransitive_graphs}
A connected vertex-transitive graph $X$ is judicial if and only if $|G_a b| = |G_b a|$ for all adjacent vertices $a$ and $b$ of $X$.
\end{cor}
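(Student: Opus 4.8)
The plan is to reduce everything to the fact that a connected vertex-transitive graph has essentially only one candidate measure to test. First I would observe that since $X$ is vertex-transitive, $V(X) = Gx$ for any $x \in V(X)$, so by Proposition \ref{same_orbit_same_rcc} we have $[X,x] = [X,y]$ for all $x,y \in V(X)$. Hence $\Rcc(X) = \{[X,1]\}$ is a single point, and any probability measure $\mu$ sustained by $X$ must satisfy $\mu[X,1] = 1$; in particular there is exactly one such $\mu$, the Dirac measure concentrated at $[X,1]$. Consequently $X$ is judicial if and only if this unique sustained measure $\mu$ is unimodular.

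Next I would invoke Proposition \ref{criterion_for_unimodularity}: the measure $\mu$ sustained by the connected graph $X$ is unimodular if and only if $|G_a b|\,\mu[X,a] = |G_b a|\,\mu[X,b]$ for all adjacent vertices $a$ and $b$. Since $[X,a] = [X,b] = [X,1]$, both $\mu[X,a]$ and $\mu[X,b]$ equal $1$, so this condition collapses to $|G_a b| = |G_b a|$ for all adjacent $a,b$. Combining the two reductions gives the stated equivalence.

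I do not expect any serious obstacle here; the corollary is essentially a direct specialization of Proposition \ref{criterion_for_unimodularity} once one notices that vertex-transitivity forces $\Rcc(X)$ to be a singleton. The only point requiring a little care is the bookkeeping around ``judicial'': one must explicitly note that existence of \emph{any} unimodular measure sustained by $X$ is equivalent to unimodularity of the \emph{unique} candidate $\mu$, which follows because a sustained probability measure on a one-point set $\Rcc(X)$ is uniquely determined. Everything else is a one-line substitution into the criterion.
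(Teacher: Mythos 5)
Your argument is correct and is essentially the same as the paper's proof: both reduce to the fact that vertex-transitivity makes $\Rcc(X)$ a singleton, so the only sustained measure is the Dirac measure, and then apply Proposition \ref{criterion_for_unimodularity} with $\mu[X,a] = \mu[X,b] = 1$. No issues.
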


\begin{proof}
Since $X$ is vertex-transitive, it has a unique rooted connected component $[X,\cdot]$. Thus the only measure sustained by $X$ is the Dirac measure $\delta_X$ defined by $\delta_X[X,\cdot] = 1$. By Proposition \ref{criterion_for_unimodularity}, $\delta_X$ is unimodular if and only if
\[
|G_a b| = |G_a b|\delta_X[X,a] = |G_b a|\delta_X[X,b] = |G_b a|
\]
for all adjacent vertices $a$ and $b$ of $X$.
\end{proof}

This author is curious to know whether there is a similar ``measure independent'' characterization of more general judicial graphs.

Unfortunately, even the seemingly weak condition in Corollary \ref{unimodularity_for_vtransitive_graphs} is not satisfied for all vertex-transitive graphs. For a counterexample, the reader is encouraged to study the so-called \emph{grandparent graph}, which is constructed in the text by Russell Lyons and Yuval Peres \cite[p.~257]{lyonsperes13}. However, as the same reference mentions, there is a prominent subcollection of connected vertex-transitive graphs, each of which is judicial.

\begin{defn}
Let $\Gamma$ be a group, and let $S$ be a generating set of $\Gamma$ that does not contain the identity $e$ with $S = S^{-1}$. The \emph{Cayley graph $X = \Cay(\Gamma,S)$ of $\Gamma$}\index{Cayley graph} is the graph defined as follows: $V(X) = \Gamma$ and $E(X) = \{\{g,gs\} ~:~ g \in \Gamma ~\text{ and }~ s \in S\}$.
\end{defn}

Note that a Cayley graph $X$ is connected because it is constructed using a set that generates the entire group. Since the edges of Cayley graphs have a specific form, it is possible to rephrase Corollary \ref{unimodularity_for_vtransitive_graphs} as follows. Suppose that $X = \Cay(\Gamma,S)$ is a Cayley graph for some group $\Gamma$. Then $X$ is judicial if and only if $|G_g (gs)| = |G_{(gs)} g|$ for all $g \in \Gamma$ and $s \in S$. Thus it suffices to demonstrate the following result.

\begin{prop}\label{cay_graph_stabs_are_equal}
If $X = \Cay(\Gamma,S)$ is a Cayley graph for some group $\Gamma$, then $|G_g (gs)| = |G_{(gs)} g|$ for all $g \in \Gamma$ and $s \in S$.
\end{prop}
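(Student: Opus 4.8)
The plan is to exploit the vertex-transitivity of the Cayley graph together with the fact that every left translation $L_h : x \mapsto hx$ on $\Gamma$ is a graph automorphism of $X = \Cay(\Gamma,S)$. The key observation is that $G_g(gs)$ and $G_{(gs)}g$ are orbits of stabilizer subgroups under the action of $G = \Aut(X)$, and that conjugation by a suitable left translation carries one pair of stabilizers to the other, hence carries one orbit bijectively onto the other.

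First I would note that for every $h \in \Gamma$, the map $L_h$ is an automorphism of $X$: if $\{x,xs\} \in E(X)$ with $s \in S$, then $\{hx, hxs\} \in E(X)$ as well, so $L_h \in G$. Next I would record the conjugation identity $L_h G_x L_h^{-1} = G_{hx}$ for all $x, h \in \Gamma$, which is immediate since $\varphi \in G_x$ iff $(L_h \varphi L_h^{-1})(hx) = hx$. Now fix $g \in \Gamma$ and $s \in S$. Taking $h = g^{-1}$ sends the pair $(g, gs)$ to the pair $(e, s)$, and taking $h$ appropriately one can also relate $(e,s)$ to $(e, s^{-1})$ using that $s^{-1} \in S$ (since $S = S^{-1}$) — here I would use right translation or the inversion-type argument, or more directly observe that the map $x \mapsto s x^{-1} s$... actually the cleanest route is: the automorphism $L_h$ with $h = gs$ applied appropriately, combined with the fact that the edge $\{g, gs\}$ equals the edge $\{gs, (gs)s^{-1}\}$, shows $[X, g, gs] = [X, gs, g]$ would suffice — but that is false in general (the grandparent-style phenomenon), so I must instead compare the \emph{sizes} $|G_g(gs)|$ and $|G_{(gs)}g|$ directly.

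The correct comparison: by the conjugation identity, $L_{g^{-1}}$ restricts to a bijection $G_g \to G_e$ carrying the orbit $G_g(gs)$ bijectively onto $G_e \cdot s$, so $|G_g(gs)| = |G_e s|$; similarly $L_{(gs)^{-1}}$ carries $G_{gs}g$ bijectively onto $G_e \cdot (s^{-1}g^{-1}g) = G_e \cdot s^{-1}$, so $|G_{(gs)}g| = |G_e s^{-1}|$. Thus it remains to show $|G_e s| = |G_e s^{-1}|$. For this I would use Lemma~\ref{ratio_of_haar_measures}: with $m$ the left Haar measure on $G$, the adjacency of $e$ and $s$ gives $|G_e s| \cdot m(G_e \cap G_s) = m(G_e)$, and adjacency of $e$ and $s^{-1}$ gives $|G_e s^{-1}| \cdot m(G_e \cap G_{s^{-1}}) = m(G_e)$; combining these with the analogous identities based at $s$ and $s^{-1}$, and the translation identities $m(G_s) = m(G_e)$ (conjugation-invariance of Haar measure under the inner automorphism by $L_s \in G$, which is unimodular-free since $G_e$ and $G_s$ are conjugate compact groups), one gets $|G_e s| = |G_e s^{-1}|$.

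The main obstacle I anticipate is the last step — relating $|G_e s|$ and $|G_e s^{-1}|$ — because a priori the stabilizer $G_e = \Aut(X)_e$ is much larger than the group $\Gamma$ acting by translations, and there is no reason the orbit of $s$ and the orbit of $s^{-1}$ under $\Aut(X)_e$ have the same size just from symmetry of $S$. The resolution must go through the Haar-measure computation of Lemma~\ref{ratio_of_haar_measures}: since $m(G_e) = m(G_s)$ (the two stabilizers are conjugate in $G$, and compact groups are unimodular so Haar measure is conjugation-equivariant), the two equations $|G_e s| \cdot m(G_e \cap G_s) = m(G_e)$ and $|G_s e| \cdot m(G_s \cap G_e) = m(G_s)$ force $|G_e s| = |G_s e|$; then applying the left translation $L_s$, which is an automorphism of $X$ sending $e \mapsto s$ and $s^{-1} \mapsto e$... wait, $L_s$ sends $e \mapsto s$ and sends $s^{-1}\cdot s = e$, hmm — rather $L_s$ sends the edge $\{e, s^{-1}\}$ to the edge $\{s, e\}$, hence conjugation by $L_s$ carries $G_{s^{-1}} \to G_e$ and $G_e \to G_s$, carrying the orbit $G_{s^{-1}} e$ onto $G_e s$, so $|G_{s^{-1}} e| = |G_e s|$; and by the symmetry $s \leftrightarrow s^{-1}$ of the whole argument, $|G_e s^{-1}| = |G_{s^{-1}} e|$... no longer — let me just say: I will chain the translation bijections and the Haar identities to reduce everything to a single tautology, and the bookkeeping of which translation sends which edge to which is the only delicate point.
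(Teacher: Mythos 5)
Your reduction via left translations is sound: each $L_h\colon x\mapsto hx$ is indeed an automorphism of $\Cay(\Gamma,S)$, conjugation gives $L_hG_xL_h^{-1}=G_{hx}$, and this correctly yields $|G_g(gs)|=|G_es|$ and $|G_{(gs)}g|=|G_es^{-1}|$. But the step you lean on to finish --- $m(G_e)=m(G_s)$, justified by ``$G_e$ and $G_s$ are conjugate compact groups and compact groups are unimodular'' --- is a genuine gap, and in fact it is exactly the point at issue. Unimodularity of the compact stabilizers (with respect to their own Haar measures) is irrelevant; what you need is that the left Haar measure $m$ of the \emph{whole} locally compact group $G=\Aut(X)$ assigns equal measure to the conjugate subgroups $G_e$ and $L_sG_eL_s^{-1}=G_s$, i.e.\ that the modular function of $G$ is trivial at $L_s$. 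That is not a general fact: in any vertex-transitive graph all vertex stabilizers are conjugate compact open subgroups of $\Aut(X)$, yet for the grandparent graph (cited just before this proposition) one has $|G_ab|\neq|G_ba|$ for adjacent $a,b$, which by Lemma \ref{ratio_of_haar_measures} means precisely $m(G_a)\neq m(G_b)$. So ``conjugate $\Rightarrow$ equal Haar measure'' fails, and since by that same lemma $m(G_e)=m(G_s)$ is \emph{equivalent} to $|G_es|=|G_se|=|G_es^{-1}|$, your argument assumes what it must prove. You anticipated this obstacle yourself earlier in the write-up, but the proposed resolution circles back to it rather than resolving it.

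What is missing is the Cayley-specific input that the paper supplies in Lemma \ref{lower_s_and_upper_s_functions}: given $\alpha\in G_g$, post-compose with the left translation by $gs\,\alpha(gs)^{-1}$ to obtain the automorphism $\alpha_s=L_{gs\,\alpha(gs)^{-1}}\circ\alpha$, which fixes $gs$ and sends $g$ to $gs\,\alpha(gs)^{-1}g$; together with the analogous map in the other direction this gives an explicit bijection $G_g(gs)\to G_{(gs)}g$, essentially $v\mapsto gs\,v^{-1}g$, so group inversion (not just translation) is the key ingredient. In your reduced formulation the same trick closes the gap: for $v=\alpha(s)$ with $\alpha\in G_e$, the automorphism $L_{v^{-1}}\circ\alpha\circ L_s$ fixes $e$ and sends $s^{-1}$ to $v^{-1}$, so $v\mapsto v^{-1}$ maps $G_es$ into $G_es^{-1}$, and by symmetry (using $S=S^{-1}$) it is a bijection, giving $|G_es|=|G_es^{-1}|$. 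Without some such explicit construction, the Haar-measure bookkeeping alone cannot deliver the conclusion.
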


\begin{lem}\label{lower_s_and_upper_s_functions}
Under the assumptions of Proposition \ref{cay_graph_stabs_are_equal}, let $g \in \Gamma$, let $s \in S$, and let $\alpha$ and $\beta$ be automorphisms of $X$. Define $\alpha_s : X \to X$ and $\beta^s : X \to X$ by
\[
\alpha_s(h) = gs(\alpha(gs)^{-1})\alpha(h)
\]
and
\[
\beta^s(h) = g(\beta(g)^{-1})\beta(h)
\]
for all $h \in \Gamma$. Then (i) $\alpha_s \in G_{(gs)}$ and (ii) $\beta^s \in G_g$. Furthermore, assume that $\alpha \in G_g$ and $\beta \in G_{(gs)}$. Then $(\alpha_s)^s = \alpha$ and $(\beta^s)_s = \beta$.
\end{lem}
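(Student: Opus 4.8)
The plan is to verify each of the four assertions by direct computation, treating the defining formulas for $\alpha_s$ and $\beta^s$ as explicit maps on $\Gamma = V(X)$ and checking (a) that they are graph automorphisms, (b) that they lie in the claimed stabilizers, and (c) that the two constructions are mutually inverse in the stated sense. The key structural fact I would use repeatedly is that, since $X = \Cay(\Gamma,S)$, left multiplication by any fixed element of $\Gamma$ is an automorphism of $X$: the edge set $\{\{h,hs\} : h\in\Gamma,\ s\in S\}$ is visibly invariant under $h \mapsto ah$. Both $\alpha_s$ and $\beta^s$ are, by inspection of their formulas, the composition of $\alpha$ (resp.\ $\beta$) with left multiplication by a fixed group element, so they are automorphisms of $X$ with no extra work.

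First I would establish (ii), that $\beta^s \in G_g$, since it is the cleaner of the two: evaluating $\beta^s(g) = g(\beta(g)^{-1})\beta(g) = g$, so $\beta^s$ fixes $g$. Then (i): evaluating $\alpha_s(gs) = gs(\alpha(gs)^{-1})\alpha(gs) = gs$, so $\alpha_s$ fixes $gs$ and hence $\alpha_s \in G_{(gs)}$. Note these two computations do not require the hypotheses $\alpha \in G_g$, $\beta \in G_{(gs)}$ — consistent with the way the lemma is phrased, where (i) and (ii) come before those extra assumptions are imposed.

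Next, under the added hypotheses $\alpha \in G_g$ and $\beta \in G_{(gs)}$, I would compute the compositions. For $(\alpha_s)^s$: by definition $(\alpha_s)^s(h) = g(\alpha_s(g)^{-1})\alpha_s(h)$. Since $\alpha \in G_g$ we have $\alpha(g) = g$, so $\alpha_s(g) = gs(\alpha(gs)^{-1})g$, and then $\alpha_s(h) = gs(\alpha(gs)^{-1})\alpha(h)$; substituting and simplifying the product $g\big(gs(\alpha(gs)^{-1})g\big)^{-1}\cdot gs(\alpha(gs)^{-1})\alpha(h)$ should collapse to $\alpha(h)$ after cancelling the $g^{-1}g$ and $(\alpha(gs)^{-1})^{-1}\alpha(gs)^{-1}$ pairs. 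Symmetrically, for $(\beta^s)_s(h) = gs(\beta^s(gs)^{-1})\beta^s(h)$, use $\beta(gs) = gs$ (since $\beta \in G_{(gs)}$) to simplify $\beta^s(gs)$ and $\beta^s(h)$, and check the product telescopes to $\beta(h)$.

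The only genuine obstacle here is bookkeeping: the formulas mix the group operation of $\Gamma$ (for which these maps are built from left translations) with function composition of automorphisms, and one must be careful that expressions like $\alpha(gs)$ denote the image of the vertex $gs$ under the automorphism $\alpha$, not a product in $\Gamma$. I would therefore be explicit at each step about which operation is in play, and would use the hypotheses $\alpha(g)=g$ and $\beta(gs)=gs$ exactly where the telescoping cancellation needs them. No deeper idea is required — once (i) and (ii) are in hand, the inverse relations are a matter of patient substitution. This lemma is evidently the engine for Proposition~\ref{cay_graph_stabs_are_equal}: the maps $\alpha \mapsto \alpha_s$ and $\beta \mapsto \beta^s$ will be used to set up a bijection between the $G_g$-orbit of $gs$ and the $G_{(gs)}$-orbit of $g$, forcing $|G_g(gs)| = |G_{(gs)}g|$.
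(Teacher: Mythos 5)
Your proposal is correct, and it improves on the paper's argument in one place. The paper establishes (i) and (ii) by verifying by hand that $\alpha_s$ and $\beta^s$ are injective, surjective, and preserve adjacency (the adjacency check being the equivalence $\alpha(h_2)=\alpha(h_1)s' \Leftrightarrow \alpha_s(h_2)=\alpha_s(h_1)s'$), before checking the fixed points $\alpha_s(gs)=gs$ and $\beta^s(g)=g$. You instead observe that $\alpha_s = L_a\circ\alpha$ and $\beta^s = L_b\circ\beta$, where $L_a$, $L_b$ are left translations by the fixed elements $a=gs\,\alpha(gs)^{-1}$ and $b=g\,\beta(g)^{-1}$, and that left translations are automorphisms of any Cayley graph since they send the edge $\{h,hs'\}$ to the edge $\{ah,(ah)s'\}$; being compositions of automorphisms, $\alpha_s$ and $\beta^s$ are automorphisms with no further checking. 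That one structural remark replaces roughly half of the paper's proof, and also makes transparent why (i) and (ii) need neither $\alpha\in G_g$ nor $\beta\in G_{(gs)}$, which you correctly point out. The remaining steps coincide with the paper: the one-line fixed-point evaluations, and the telescoping computations of $(\alpha_s)^s(h)$ and $(\beta^s)_s(h)$ using $\alpha(g)=g$ and $\beta(gs)=gs$ respectively; your sketched cancellation (e.g.\ $g\bigl(gs\,\alpha(gs)^{-1}g\bigr)^{-1}gs\,\alpha(gs)^{-1}\alpha(h)=\alpha(h)$) is exactly the computation the paper displays, so leaving it as ``patient substitution'' is not a gap. Your closing remark about how the lemma feeds the bijection in Proposition \ref{cay_graph_stabs_are_equal} also matches the paper's use of it.
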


\begin{proof}
In either case, we must prove that $\alpha_s$ and $\beta^s$ are injective, surjective, preserve adjacency, and satisfy the equations $\alpha_s(gs) = gs$ and $\beta^s(g) = g$.

Let us begin by showing that (i) is true. To see that $\alpha_s$ is injective, assume that $h_1$ and $h_2$ are vertices of $X$. Then
\[
\alpha_s(h_1) = \alpha_s(h_2) ~~\Rightarrow~~ gs(\alpha(gs)^{-1})\alpha(h_1) = gs(\alpha(gs)^{-1})\alpha(h_2) ~~\Rightarrow~~ \alpha(h_1) = \alpha(h_2),
\]
which means $h_1 = h_2$ because $\alpha$ is injective. Let $k \in \Gamma$. Since $\alpha$ is surjective, there is an $h \in \Gamma$ such that $\alpha(h) = \alpha(gs)s^{-1}g^{-1}k$. Then
\[
\alpha_s(h) = gs(\alpha(gs)^{-1})\alpha(h) = gs(\alpha(gs)^{-1})\alpha(gs)s^{-1}g^{-1}k = k,
\]
and so $\alpha_s$ is surjective too. The following argument shows that $\alpha_s$ preserves adjacency. Suppose that $h_1$ and $h_2$ are vertices of $X$. Then $h_1$ and $h_2$ are adjacent precisely when $\alpha(h_1)$ and $\alpha(h_2)$ are. Furthermore,
\begin{align*}
\alpha(h_2) = \alpha(h_1)s' ~~&\Leftrightarrow~~ gs(\alpha(gs)^{-1})\alpha(h_2) = gs(\alpha(gs)^{-1})\alpha(h_1)s'\\
                              &\Leftrightarrow~~ \alpha_s(h_2) = \alpha_s(h_1)s'
\end{align*}
for all $s' \in S$. It follows that $\alpha(h_1)$ and $\alpha(h_2)$ are adjacent if and only if $\alpha_s(h_1)$ and $\alpha_s(h_2)$ are too. Lastly, $\alpha_s(gs) = gs(\alpha(gs)^{-1})\alpha(gs) = gs$. Hence $\alpha_s \in G_{(gs)}$. The proof of (ii) is analogous, and so it is omitted.

The second part of the lemma involves two simple calculations, which are aided by the assumption that $\alpha(g) = g$ and $\beta(gs) = gs$:
\begin{align*}
(\alpha_s)^s(h) &= g(\alpha_s(g)^{-1})\alpha_s(h)\\
                &= g(gs(\alpha(gs)^{-1})\alpha(g))^{-1}gs(\alpha(gs)^{-1})\alpha(h)\\
                &= g(\alpha(g)^{-1})\alpha(gs)s^{-1}g^{-1}gs(\alpha(gs)^{-1})\alpha(h)\\
                &= g(\alpha(g)^{-1})\alpha(h)\\
                &= \alpha(h)
\end{align*}
and
\begin{align*}
(\beta^s)_s(h) &= gs(\beta^s(gs)^{-1})\beta^s(h)\\
               &= gs(g(\beta(g)^{-1})\beta(gs))^{-1}g(\beta(g)^{-1})\beta(h)\\
               &= gs(\beta(gs)^{-1})\beta(g)g^{-1}g(\beta(g)^{-1})\beta(h)\\
               &= gs(\beta(gs)^{-1})\beta(h)\\
               &= \beta(h)
\end{align*}
for all $h \in \Gamma$. Thus $(\alpha_s)^s = \alpha$ and $(\beta^s)_s = \beta$.
\end{proof}

\begin{proof}[Proof of Proposition \ref{cay_graph_stabs_are_equal}]
Let $g \in \Gamma$, and let $s \in S$. To see that $|G_g (gs)| = |G_{(gs)} g|$, it suffices to construct a bijection between $G_g (gs)$ and $G_{(gs)} g$. Define $f : G_g (gs) \to G_{(gs)} g$ by $f(\alpha(gs)) = \alpha_s(g)$ for all $\alpha(gs) \in G_g (gs)$ where $\alpha \in G_g$. Consider the function $F : G_{(gs)} g \to G_g (gs)$ defined by $F(\beta(g)) = \beta^s(gs)$ for all $\beta(g) \in G_{(gs)} g$ where $\beta \in G_{(gs)}$. Lemma \ref{lower_s_and_upper_s_functions} tells us that $f$ and $F$ are well-defined. In fact, as the following argument demonstrates, $F$ is the inverse of $f$. Let $\alpha(gs) \in G_g (gs)$ for some $\alpha \in G_g$, and let $\beta(g) \in G_{(gs)} g$ for some $\beta \in G_{(gs)}$. Then
\[
(F \circ f)(\alpha(gs)) = F(f(\alpha(gs))) = F(\alpha_s(g)) = (\alpha_s)^s(gs) = \alpha(gs)
\]
and
\[
(f \circ F)(\beta(g)) = f(F(\beta(g))) = f(\beta^s(gs)) = (\beta^s)_s(g) = \beta(g)
\]
where $(\alpha_s)^s(gs) = \alpha(gs)$ and $(\beta^s)_s(g) = \beta(g)$ by Lemma \ref{lower_s_and_upper_s_functions}. Hence there is a bijection between $G_g (gs)$ and $G_{(gs)} g$, which means these two sets are of equal size.
\end{proof}

The inspiration for the following result came from the known fact that Cayley graphs are unimodular in the sense of group unimodularity \cite[p.~8]{aldouslyons07}, which concerns the right invariance of the left Haar measure. Although this fact is demonstrated by Russell Lyons and Yuval Peres \cite[p.~305]{lyonsperes13}, we restate the result in our language of unimodularity.

\begin{cor}
Every Cayley graph is judicial.\qed
\end{cor}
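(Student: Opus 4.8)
The plan is simply to chain the three facts already in place. First I would observe that a Cayley graph $X = \Cay(\Gamma, S)$ meets the hypotheses of Corollary \ref{unimodularity_for_vtransitive_graphs}: it is connected because $S$ generates $\Gamma$ (so any two vertices are joined by a walk along generators), and it is vertex-transitive because, for each $h \in \Gamma$, left multiplication $g \mapsto hg$ is an automorphism of $X$ — it permutes $V(X) = \Gamma$ and sends the edge $\{g, gs\}$ to $\{hg, hgs\}$ — and these automorphisms act transitively on $\Gamma$.

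Next I would use the rephrasing of Corollary \ref{unimodularity_for_vtransitive_graphs} noted in the text: since every edge of $X$ has the form $\{g, gs\}$ with $g \in \Gamma$ and $s \in S$ (and conversely every such pair gives an edge, because $g = (gs)s^{-1}$ with $s^{-1} \in S$), quantifying over adjacent pairs of vertices is the same as quantifying over pairs $(g,s) \in \Gamma \times S$. Hence $X$ is judicial if and only if $|G_g(gs)| = |G_{(gs)}g|$ for all $g \in \Gamma$ and $s \in S$. But this is precisely the content of Proposition \ref{cay_graph_stabs_are_equal}, so $X$ is judicial; as $X$ was an arbitrary Cayley graph, the corollary follows.

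There is no real obstacle here — the genuine work was carried out in Lemma \ref{lower_s_and_upper_s_functions} and Proposition \ref{cay_graph_stabs_are_equal}, where the explicit bijection $\alpha(gs) \mapsto \alpha_s(g)$ between $G_g(gs)$ and $G_{(gs)}g$ was constructed. The only point worth stating carefully is the equivalence of the two quantifier ranges above, and the fact that the Dirac measure $\delta_X$ is the unique measure sustained by the (vertex-transitive) graph $X$, so that "judicial" reduces to "$\delta_X$ is unimodular", which is exactly what Proposition \ref{criterion_for_unimodularity} turns into the stabilizer equality.
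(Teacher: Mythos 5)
Your proposal is correct and follows essentially the same route as the paper: verify connectedness and vertex-transitivity, invoke Corollary \ref{unimodularity_for_vtransitive_graphs} in its rephrased form for edges $\{g,gs\}$, and conclude via the stabilizer equality of Proposition \ref{cay_graph_stabs_are_equal}. Your explicit check of vertex-transitivity by left multiplication is a detail the paper leaves implicit, but otherwise the arguments coincide.
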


%
%
\subsubsection{Convexity}
Denote by $\ol{\M}$\index{M@$\ol{\M}$} the closure of the set of laws $\M$\index{M@$\M$} of finite graphs. The remainder of this section is devoted to the study of the convexity of $\ol{\M}$. As demonstrated in the author's Honours project \cite{artemenko11a}, $\ol{\M}$ is convex. Furthermore, it is known that $\ol{\M}$ is weakly compact \cite{schramm08}, which means it is reasonable to determine its extreme points. Since the set of unimodular measures $\U$ is convex too, we attempt to calculate its extreme points as well.

\begin{defn}
An \emph{extreme point}\index{extreme point} of a convex set $A$ is an element $x \in A$ such that if $x = (y + z)/2$ for some $y,z \in A$, then $y = x$ and $z = x$.
\end{defn}

\begin{lem}\label{sustained_by_same_graph}
Let $\mu$ be a measure sustained by a connected graph $X$. If $\mu = (\eta + \nu)/2$ for some measures $\eta$ and $\nu$ on $\Gr$, then $\eta$ and $\nu$ are sustained by $X$.
\end{lem}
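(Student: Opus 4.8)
The plan is to unwind the definition of ``sustained'' and then exploit nothing more than the nonnegativity of measures. Recall that $\mu$ being sustained by $X$ means precisely $\mu(\Gr \setminus \Rcc(X)) = 0$. Since $X$ is countable, so is $\Rcc(X)$, and hence $\Gr \setminus \Rcc(X)$ is a Borel subset of the compact ultrametric space $\Gr$; in particular it is legitimate to evaluate $\eta$ and $\nu$ on it.

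First I would apply the hypothesis $\mu = (\eta + \nu)/2$ to the set $\Gr \setminus \Rcc(X)$, obtaining
\[
0 = \mu(\Gr \setminus \Rcc(X)) = \frac{\eta(\Gr \setminus \Rcc(X)) + \nu(\Gr \setminus \Rcc(X))}{2}.
\]
Next I would observe that $\eta(\Gr \setminus \Rcc(X)) \geq 0$ and $\nu(\Gr \setminus \Rcc(X)) \geq 0$, since $\eta$ and $\nu$ are measures, and a sum of two nonnegative real numbers vanishes only when both summands vanish. Hence $\eta(\Gr \setminus \Rcc(X)) = 0 = \nu(\Gr \setminus \Rcc(X))$, which is exactly the assertion that $\eta$ and $\nu$ are sustained by $X$.

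There is essentially no obstacle here; the only point worth a word of care is the measurability of $\Gr \setminus \Rcc(X)$, which is immediate from $\Rcc(X)$ being a countable, hence Borel, subset of $\Gr$. I would also remark in passing that connectedness of $X$ is not actually used in this argument — the same reasoning applies to any countable graph — but the connected case is all that is needed for the subsequent application, namely deducing that a unimodular measure sustained by a connected graph is an extreme point of $\U$.
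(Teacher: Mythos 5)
Your proof is correct, and it is a slightly different — and in fact tighter — route than the one in the thesis. The paper argues by contradiction one isomorphism class at a time: it supposes $\eta[Y,y] > 0$ for some $[Y,y] \notin \Rcc(X)$, notes $\mu[Y,y] = 0$ while $(\eta[Y,y]+\nu[Y,y])/2 > 0$, and concludes that $\eta$ vanishes on every singleton outside $\Rcc(X)$, hence is sustained by $X$. Read literally, that last step needs an extra word, since $\Gr \setminus \Rcc(X)$ is in general uncountable and a measure can vanish on every singleton of a set while still charging the set; the quickest repair is exactly what you do. By evaluating the identity $\mu = (\eta+\nu)/2$ on the Borel set $\Gr \setminus \Rcc(X)$ itself and using only nonnegativity, you get $\eta(\Gr \setminus \Rcc(X)) = \nu(\Gr \setminus \Rcc(X)) = 0$ in one stroke, with the measurability of $\Rcc(X)$ (a countable union of singletons, each closed in the ultrametric space $\Gr$) as the only point needing mention — which you do address. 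Your observation that connectedness of $X$ plays no role is also accurate; neither proof uses it, and it is only relevant to the downstream application via Theorem \ref{uniqueness_of_sustained_unimodular_measures}.
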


\begin{proof}
Suppose that $\mu = (\eta + \nu)/2$ for some measures $\eta$ and $\nu$ on $\Gr$. Without loss of generality, assume that $\eta[Y,y] > 0$ for some $[Y,y] \notin \Rcc(X)$. Then $\mu[Y,y] = 0$, but $(\eta[Y,y] + \nu[Y,y])/2 > 0$; a contradiction. It follows that $\eta[Y,y] = 0$ for all $[Y,y] \notin \Rcc(X)$. Thus $\eta$ must be sustained by $X$. Similarly, $\nu$ is sustained by $X$.
\end{proof}

\begin{prop}
If $\mu \in \ol{\M}$ is sustained by a connected graph, then $\mu$ is an extreme point of $\ol{\M}$.
\end{prop}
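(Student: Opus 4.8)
The plan is to combine Lemma \ref{sustained_by_same_graph} with the uniqueness theorem, Theorem \ref{uniqueness_of_sustained_unimodular_measures}. Suppose $\mu \in \ol{\M}$ is sustained by a connected graph $X$, and suppose $\mu = (\eta + \nu)/2$ for some $\eta, \nu \in \ol{\M}$; the goal is to show $\eta = \nu = \mu$. First I would invoke Lemma \ref{sustained_by_same_graph}, which applies because $X$ is connected, to conclude that both $\eta$ and $\nu$ are sustained by $X$.

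Next I would observe that every member of $\ol{\M}$ is unimodular, so that in particular $\eta, \nu \in \U$. This rests on the already-stated fact that any weak limit of a sequence of laws of finite graphs is unimodular: since $\Gr$ is a compact metric space, the space of probability measures on $\Gr$ with the weak topology is metrizable, so the closure $\ol{\M}$ coincides with the sequential closure of $\M$; hence each element of $\ol{\M}$ is the weak limit of some sequence $(\Psi(X_n))$ of laws of finite graphs and is therefore unimodular.

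With these two observations in hand, $\eta$ and $\nu$ are unimodular measures sustained by the \emph{connected} graph $X$, so Theorem \ref{uniqueness_of_sustained_unimodular_measures} forces $\eta = \nu$. Then $\mu = (\eta + \nu)/2 = \eta = \nu$, so $\eta = \mu$ and $\nu = \mu$; that is, $\mu$ is an extreme point of $\ol{\M}$.

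The only step that needs care — and the main obstacle — is the passage from "$\ol{\M}$ is the topological closure of $\M$" to "every element of $\ol{\M}$ is a weak limit of a \emph{sequence} drawn from $\M$", since the available unimodularity result is phrased for sequences. This is exactly where metrizability of the weak topology on probability measures over the compact metric space $\Gr$ is used; alternatively one could appeal to a form of the unimodularity statement already established for the closure $\ol{\M}$ directly. The remainder is a straightforward chaining of Lemma \ref{sustained_by_same_graph} and Theorem \ref{uniqueness_of_sustained_unimodular_measures}.
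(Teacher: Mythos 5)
Your proposal is correct and follows essentially the same route as the paper: apply Lemma \ref{sustained_by_same_graph} to see that $\eta$ and $\nu$ are sustained by the connected graph $X$, then conclude $\mu = \eta = \nu$ from Theorem \ref{uniqueness_of_sustained_unimodular_measures}. Your extra paragraph justifying that every element of $\ol{\M}$ is unimodular (via metrizability of the weak topology on probability measures over the compact space $\Gr$, so that closure points are sequential limits of laws) simply makes explicit a step the paper leaves implicit, and is a welcome clarification rather than a deviation.
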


\begin{proof}
Suppose that $\mu \in \ol{\M}$ is sustained by a connected graph $X$, and assume that $\mu = (\eta + \nu)/2$ for some $\eta,\nu \in \ol{\M}$. By Lemma \ref{sustained_by_same_graph}, $\eta$ and $\nu$ are sustained by $X$ too. Then Theorem \ref{uniqueness_of_sustained_unimodular_measures} tells us that $\mu = \eta = \nu$. Hence $\mu$ is an extreme point of $\ol{\M}$.
\end{proof}

A nearly identical argument yields the following proposition, which informs the reader of some of the extreme points of the set of unimodular measures.

\begin{prop}\label{unimodular_sustained_by_connected_is_extreme_pt}
If $\mu$ is a unimodular measure sustained by a connected graph, then $\mu$ is an extreme point of the convex set $\U$ of unimodular measures.\qed
\end{prop}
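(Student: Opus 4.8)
The plan is to imitate, almost verbatim, the argument just given for $\ol{\M}$, since the two statements differ only in which convex set we work inside. So suppose $\mu$ is a unimodular measure sustained by a connected graph $X$, and suppose $\mu = (\eta + \nu)/2$ for some $\eta, \nu \in \U$. The goal is to force $\eta = \nu = \mu$.

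First I would invoke Lemma \ref{sustained_by_same_graph}: because $\mu$ is sustained by the connected graph $X$ and $\mu$ is the average of $\eta$ and $\nu$, both $\eta$ and $\nu$ are themselves sustained by $X$. Note that Lemma \ref{sustained_by_same_graph} is purely measure-theoretic and makes no reference to unimodularity, so it applies without modification here. Next, since $\eta$ and $\nu$ are by hypothesis elements of $\U$, they are unimodular measures sustained by the connected graph $X$. Now Theorem \ref{uniqueness_of_sustained_unimodular_measures} applies directly: a connected graph sustains at most one unimodular measure, so $\eta = \nu$, and since $\mu = (\eta+\nu)/2$ we get $\mu = \eta = \nu$. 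This shows $\mu$ is an extreme point of $\U$, completing the argument.

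I do not expect any genuine obstacle: all the heavy lifting has already been done in Lemma \ref{sustained_by_same_graph} and Theorem \ref{uniqueness_of_sustained_unimodular_measures}, and the only thing to check is that the convexity/extreme-point bookkeeping goes through inside $\U$ rather than $\ol{\M}$ — which it does, since $\U$ is convex (a convex combination of unimodular measures clearly still satisfies the mass transport identity $(\star)$) and the definition of extreme point is applied in exactly the same way. The one small point worth stating explicitly, to keep the proof self-contained, is that $\eta$ and $\nu$ lie in $\U$ by assumption, so that Theorem \ref{uniqueness_of_sustained_unimodular_measures} is genuinely available; after that the proof is a one-line consequence, which is presumably why the paper marks it with \qed rather than spelling it out.
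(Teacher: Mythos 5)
Your proposal is correct and is essentially the paper's own argument: the paper states that Proposition \ref{unimodular_sustained_by_connected_is_extreme_pt} follows by a ``nearly identical argument'' to the one for $\ol{\M}$, namely applying Lemma \ref{sustained_by_same_graph} to get that $\eta$ and $\nu$ are sustained by $X$ and then invoking Theorem \ref{uniqueness_of_sustained_unimodular_measures}, exactly as you do. No gaps.
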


The converse of Proposition \ref{unimodular_sustained_by_connected_is_extreme_pt} states that every extreme point of $\U$ is a unimodular measure sustained by a connected graph. Whether or not it is true is unknown to this author.
\cleardoublepage

%
%
\section{Judiciality of disconnected graphs}

Recall that a graph is \emph{judicial} if it sustains a unimodular measure, and it is \emph{lawless} otherwise. Consider a graph whose connected components are judicial. In this section, we demonstrate that such a graph itself is judicial, even if it has countably many components.

As previously shown, if $X$ is a connected judicial graph, it sustains a unique unimodular measure $\Psi(X)$. However, this does not hold if the graph is not connected. Indeed, let $Y = 2K_1 + K_2$ where $K_1$ and $K_2$ are the complete graphs on $1$ and $2$ vertices, respectively. Of course, the law $\Psi(Y)$ is an example of a unimodular measure sustained by $Y$. However, the measure $\mu$ defined by $\mu[K_1,\cdot] = 1$ and $\mu[K_2,\cdot] = 0$ is unimodular too; here $[K_1,\cdot]$ and $[K_2,\cdot]$ are the rooted connected components of $Y$. It is easy to see that
\[
\mu = \mu[K_1,\cdot] \Psi(K_1) + \mu[K_2,\cdot] \Psi(K_2)
\]
for all unimodular measures $\mu$ sustained by $Y$. That is, $\mu$ is a convex combination of the measures $\Psi(K_1)$ and $\Psi(K_2)$. Unfortunately, this is not as simple for more general graphs, but the unimodularity of $\mu$ is enough to prove it.

%
%
\subsection{The union of multiple copies of a connected judicial graph sustains a unique unimodular measure}

There is a special case though: a union of multiple copies of a connected judicial graph. As the following proposition states, such a graph sustains a unique unimodular measure, even though it is not connected.

\begin{prop}\label{multiple_copies_of_connected_graph_is_judicial}
If $X$ is a connected judicial graph, then for all positive integers $b$, the graph $bX$ is judicial, and $\Psi(X)$ is the unique unimodular measure sustained by $bX$.
\end{prop}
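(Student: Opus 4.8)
The plan is to reduce everything to the connected case by observing that $bX$ has exactly the same rooted connected components as $X$. First I would check that $\Rcc(bX) = \Rcc(X)$: every vertex of $bX$ lies in one of the $b$ copies of $X$, which is itself a connected graph isomorphic to $X$, so its rooted connected component is of the form $[X,x]$; conversely each $[X,x] \in \Rcc(X)$ arises this way. (This is also immediate from the proposition expressing $\Rcc$ of a disjoint union as the union of the $\Rcc$'s of the components, applied with $I = \{1,\dots,b\}$ and $X^k = X$ for each $k$.) Consequently a probability measure $\mu$ on $\Gr$ satisfies $\mu(\Gr\setminus\Rcc(bX)) = 0$ if and only if $\mu(\Gr\setminus\Rcc(X)) = 0$; in other words, ``sustained by $bX$'' and ``sustained by $X$'' are literally the same condition on $\mu$.

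With this in hand, judiciality is immediate. Since $X$ is connected and judicial, Theorem \ref{uniqueness_of_sustained_unimodular_measures} provides its unique unimodular measure $\Psi(X)$, which is sustained by $X$ and hence, by the remark above, by $bX$. Because unimodularity is a property of the measure alone and makes no reference to an ambient graph, $\Psi(X)$ is still unimodular, so $bX$ is judicial.

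For uniqueness, let $\mu \in \U$ be sustained by $bX$. By the reduction, $\mu$ is a unimodular measure sustained by the connected graph $X$, and so is $\Psi(X)$; Theorem \ref{uniqueness_of_sustained_unimodular_measures} then forces $\mu = \Psi(X)$. There is essentially no obstacle here — the one point worth stating carefully is that unimodularity does not depend on which graph witnesses that $\mu$ is sustained, so the disconnectedness of $bX$ never enters, and the uniqueness theorem for connected graphs does all the work. (As a sanity check when $X$ is finite: the law $\Psi(bX)$ of the finite graph $bX$ satisfies $|\Aut(bX)(x,i)| = b\,|\Aut(X)x|$ and $|V(bX)| = b\,|V(X)|$, so the two factors of $b$ cancel and $\Psi(bX) = \Psi(X)$, consistent with the statement.)
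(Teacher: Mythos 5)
Your proposal is correct and follows essentially the same route as the paper: identify $\Rcc(bX)$ with $\Rcc(X)$, note that being sustained by $bX$ is the same condition as being sustained by $X$, and invoke Theorem \ref{uniqueness_of_sustained_unimodular_measures} for both existence of $\Psi(X)$ and uniqueness. The additional remarks (that unimodularity is a property of the measure alone, and the finite-case check that $\Psi(bX) = \Psi(X)$) are consistent asides rather than a different argument.
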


\begin{proof}
Suppose that $X$ is a connected judicial graph, let $\mu = \Psi(X)$, and let $b$ be a positive integer. Since $[X,x] = [(bX)_x,x]$ where $(bX)_x$ is the connected component of $bX$ containing $x$, it follows that the set of rooted connected components of $X$ coincides with that of $bX$. Clearly, $\mu$ is sustained by $bX$. To prove uniqueness, assume that $\nu \in \U$ is sustained by $bX$ too. Then $\nu$ is sustained by $X$, which means $\nu = \mu$ by Theorem \ref{uniqueness_of_sustained_unimodular_measures}. Hence $\mu$ is the unique unimodular measure sustained by $bX$.
\end{proof}

%
%
\subsection{Graphs with judicial components are judicial}

In general, a disconnected judicial graph does not sustain a unique unimodular measure. Instead, as the following result states, every unimodular measure sustained by such a graph is a convex combination.

%
%
\begin{framed}
\begin{theo}\label{judiciality_of_disconnected_graphs}
Let $I$ be a subset of $\NN$. Suppose that $X = \sum_{k \in I} b_kX^k$ where $\{X^k ~:~ k \in I\}$ is a set of pairwise distinct connected judicial graphs and $\{b_k ~:~ k \in I\}$ is a set of positive integers. Let $\mu^k = \Psi(X^k)$ for all $k \in I$. If $\mu \in \U$ is sustained by $X$, then $\mu$ is the convex combination
\[
\mu = \sum_{k \in I} \left(\sum_{x \in \Rcc(k)} \mu[X^k,x]\right) \mu^k
\]
where $x \in \Rcc(k)$ is shorthand for $[X^k,x] \in \Rcc(X^k)$.
\end{theo}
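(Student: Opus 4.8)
The plan is to verify directly that the purported convex combination, call it $\nu = \sum_{k \in I} c_k \mu^k$ with $c_k = \sum_{x \in \Rcc(k)} \mu[X^k,x]$, agrees with $\mu$ on every rooted connected component of $X$, using the fact that $\mu$ is determined on each component $X^k$ by the unimodularity criterion of Proposition \ref{criterion_for_unimodularity}. First I would record the bookkeeping: by the Proposition on disjoint unions, $\Rcc(X) = \bigcup_{k \in I} \Rcc(X^k)$, and since the $X^k$ are pairwise distinct connected graphs, Proposition \ref{equiv_of_rcc} tells us this union is disjoint — so $\mu$ restricted to $\Rcc(X^k)$ makes sense, and $\sum_{k \in I} c_k = \mu(\Rcc(X)) = 1$, so $\nu$ is genuinely a convex combination (each $c_k \geq 0$, and we may harmlessly discard any $k$ with $c_k = 0$, though it is cleaner not to).

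Next I would show that $\mu$ restricted to $\Rcc(X^k)$, after normalization, is unimodular on the connected graph $X^k$. The key point: whether two vertices $a,b$ are adjacent, and the stabilizer-orbit sizes $|G_a b|$, depend only on the connected component, so $\Aut(X)$ acts on each $X^k$ with the same local data as $\Aut(X^k)$ (more precisely, for $a, b$ in the same copy of $X^k$, the relevant stabilizer orbits computed in $\Aut(X)$ and in $\Aut(X^k)$ have the same cardinality — this uses that $X^k$ occurs $b_k$ times but automorphisms of $X$ permuting copies do not change orbit sizes within a component). Since $\mu$ is unimodular on $X$, Proposition \ref{criterion_for_unimodularity} gives $|G_a b|\mu[X^k,a] = |G_b a|\mu[X^k,b]$ for all adjacent $a,b$ in $X^k$; hence the renormalized measure $\mu|_{X^k} / c_k$ (when $c_k > 0$) satisfies the same criterion and is therefore a unimodular measure sustained by the connected graph $X^k$. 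By Theorem \ref{uniqueness_of_sustained_unimodular_measures} it must equal $\mu^k = \Psi(X^k)$, i.e. $\mu[X^k,x] = c_k \, \mu^k[X^k,x]$ for every $[X^k,x] \in \Rcc(X^k)$. When $c_k = 0$, strictness (Proposition \ref{sustained_implies_strictly_sustained}) applied to $\mu^k$ together with $\mu[X^k,x] \geq 0$ summing to $0$ forces $\mu[X^k,x] = 0 = c_k\mu^k[X^k,x]$ anyway.

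Finally I would assemble: for any $[Y,y] \in \Gr$, if $[Y,y] \notin \Rcc(X)$ then both $\mu$ and each $\mu^k$ vanish there (the $\mu^k$ are sustained by $X^k \subseteq X$), so $\nu[Y,y] = 0 = \mu[Y,y]$; and if $[Y,y] = [X^k,y] \in \Rcc(X^k)$ for the unique such $k$, then $\nu[X^k,y] = \sum_{j} c_j \mu^j[X^k,y] = c_k \mu^k[X^k,y] = \mu[X^k,y]$, since $\mu^j$ is sustained by $X^j$ and $\Rcc(X^j) \cap \Rcc(X^k) = \emptyset$ for $j \neq k$. Two measures sustained by $X$ that agree on $\Rcc(X)$ are equal, so $\mu = \nu$. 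The main obstacle is the middle step — making rigorous that the automorphism data of $X$ restricts correctly to each $X^k$ so that Proposition \ref{criterion_for_unimodularity} transfers; the presence of $b_k$ identical copies means $\Aut(X)$ is strictly larger than $\prod_k \Aut(X^k) \wr S_{b_k}$-type products only in the way it permutes copies, and one must check this permutation action does not alter the within-component stabilizer orbit sizes that the criterion uses.
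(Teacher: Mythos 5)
Your proposal is correct, but it follows a genuinely different route from the paper's. The paper never forms the normalized restriction of $\mu$ to a component and never invokes Theorem \ref{uniqueness_of_sustained_unimodular_measures}; instead it works directly with the definition of unimodularity: for adjacent $x,y$ in $X^l$ it applies the unimodularity of $\mu$ to the characteristic function of the orbit-pair set $\{[X^l,w,z] : w \in \Aut(X^l)x,\ z \in \Aut(X^l)y\}$, and the unimodularity of $\mu^l$ to the same function rescaled by the ratio $\mu[X^l,\cdot]/\mu^l[X^l,\cdot]$, obtaining the proportionality $\mu[X^l,x]\mu^l[X^l,y]=\mu[X^l,y]\mu^l[X^l,x]$ (Lemma \ref{lemma_judiciality_of_disconnected_graphs}); it then telescopes this along paths (Corollary \ref{corollary_judiciality_of_disconnected_graphs}) and evaluates the convex combination componentwise exactly as in your final paragraph. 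Your route -- show $\mu$ restricted to $\Rcc(X^k)$ and renormalized by $c_k$ is unimodular on the connected graph $X^k$, then quote uniqueness to identify it with $\Psi(X^k)$ -- buys modularity: the whole computation collapses into citations of Proposition \ref{criterion_for_unimodularity} and Theorem \ref{uniqueness_of_sustained_unimodular_measures}, whereas the paper's argument is self-contained and avoids any discussion of how automorphism data restricts to components. The one caveat is the step you yourself flag: Proposition \ref{criterion_for_unimodularity} is stated only for a measure sustained by a \emph{connected} graph, so you cannot apply it to $\mu$ verbatim; you must either rerun its forward direction with the test function supported on $\{[X^k,a,b]\}$ (the sums over $\Rcc(X)$ then localize to $\Rcc(X^k)$ and the stabilizer orbits that appear are automatically those of $\Aut(X^k)$, so no comparison with $\Aut(X)$ is even needed, though your observation that an automorphism of $X$ fixing a vertex preserves its component and hence gives the same within-component orbit sizes is the correct bridge if you insist on working in $\Aut(X)$), or else first prove that the normalized restriction is unimodular by the direct test-function argument -- which is precisely the paper's later proposition that non-null components of a judicial graph are judicial, whose proof is independent of the present theorem, so no circularity arises. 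The converse direction of the criterion is then applied to the normalized measure on the connected $X^k$, which is within its stated scope, and the rest of your assembly (disjointness of the $\Rcc(X^k)$ via Proposition \ref{equiv_of_rcc}, the trivial $c_k=0$ case, and agreement on $\Rcc(X)$ forcing equality of sustained measures) is exactly right; the appeal to Proposition \ref{sustained_implies_strictly_sustained} in the $c_k=0$ case is unnecessary but harmless.
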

\end{framed}

\begin{figure}[ht]
\begin{center}
\includegraphics[scale=1.2]{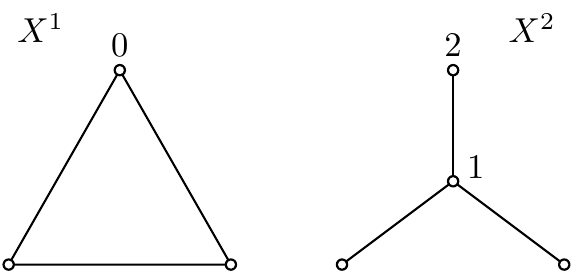}
\caption[The graph $X$ whose connected components are $X^1$ and $X^2$]{The graph $X$ whose connected components are $X^1$ and $X^2$.}
\label{z3_and_t1}
\end{center}
\end{figure}

To gain some intuition for this result, consider the following example. Denote by $X^1$ and $X^2$ the graphs in Figure \ref{z3_and_t1}, let $\mu^1 = \Psi(X^1)$ and $\mu^2 = \Psi(X^2)$, and let $X = X^1 + X^2$. Here $\Rcc(X^1) = \{[X^1,0]\}$ and $\Rcc(X^2) = \{[X^2,1],[X^2,2]\}$. Note that $\mu^1[X^1,0] = 1$, $\mu^2[X^2,1] = 1/4$, and $\mu^2[X^2,2] = 3/4$. As in the hypothesis of Theorem \ref{judiciality_of_disconnected_graphs}, let $\mu$ be a unimodular measure sustained by $X$. In order to use unimodularity in this case, it suffices to define a pair of nonnegative functions $g$ and $h$ on $\dGr$ as follows: $g[X^2,1,j] = 1$ and
\[
h[X^2,1,j] = \frac{\mu[X^2,1]}{\mu^2[X^2,1]}
\]
for all $j \in N(1)$, and $g$ and $h$ are zero otherwise. Since $\mu$ is unimodular and $N(2) = \{1\}$, it follows that
\[
g[X^2,1,2] \cdot \mu[X^2,2] + \sum_{j \in N(1)} g[X^2,j,1] \cdot \mu[X^2,1] = \mu[X^2,2]
\]
and
\[
g[X^2,2,1] \cdot \mu[X^2,2] + \sum_{j \in N(1)} g[X^2,1,j] \cdot \mu[X^2,1] = |N(1)|\mu[X^2,1]
\]
are equal. That is, $\mu[X^2,2] = 3\mu[X^2,1]$. Similarly, the unimodularity of $\mu^2$ implies that
\[
h[X^2,2,1] \cdot \mu^2[X^2,2] + \sum_{j \in N(1)} h[X^2,1,j] \cdot \mu^2[X^2,1] = |N(1)|\left(\frac{\mu[X^2,1]\mu^2[X^2,1]}{\mu^2[X^2,1]}\right)
\]
and
\[
h[X^2,1,2] \cdot \mu^2[X^2,2] + \sum_{j \in N(1)} h[X^2,j,1] \cdot \mu^2[X^2,1] = \frac{\mu[X^2,1]\mu^2[X^2,2]}{\mu^2[X^2,1]}
\]
are equal, which means $3\mu[X^2,1]\mu^2[X^2,1] = \mu[X^2,1]\mu^2[X^2,2]$. Thus
\[
\mu[X^2,1]\mu^2[X^2,2] = \mu[X^2,2]\mu^2[X^2,1].
\]
It remains to show that
\[
\mu = \mu[X^1,0]\mu^1 + (\mu[X^2,1] + \mu[X^2,2]) \cdot \mu^2,
\]
which may be accomplished by evaluating the right-hand side at $[X^1,0]$, $[X^2,1]$, and $[X^2,2]$. Doing so at $[X^1,0]$ is trivial because $\mu^2[X^1,0] = 0$ and $\mu^1[X^1,0] = 1$. However, let us use the equality obtained above, and apply the right-hand side to $[X^2,1]$:
\begin{align*}
\mu[X^1,0]\mu^1[X^2,1] &+ (\mu[X^2,1] + \mu[X^2,2]) \cdot \mu^2[X^2,1]\\
                       &= \mu[X^2,1]\mu^2[X^2,1] + \mu[X^2,2]\mu^2[X^2,1]\\
                       &= \mu[X^2,1]\mu^2[X^2,1] + \mu[X^2,1]\mu^2[X^2,2]\\
                       &= \mu[X^2,1] \cdot (\mu^2[X^2,1] + \mu^2[X^2,2])\\
                       &= \mu[X^2,1] \left(\frac{1}{4} + \frac{3}{4}\right)\\
                       &= \mu[X^2,1]
\end{align*}
because $\mu^1[X^2,1] = 0$. An identical sequence of equalities proves that
\[
\mu[X^1,0]\mu^1[X^2,2] + (\mu[X^2,1] + \mu[X^2,2]) \cdot \mu^2[X^2,2] = \mu[X^2,2].
\]
Hence the conclusion of Theorem \ref{judiciality_of_disconnected_graphs} holds for this example. As the reader can guess, this idea of exchanging the arguments of $\mu$ and $\mu^k$ is the basis of the proof of Theorem \ref{judiciality_of_disconnected_graphs}. The ability to perform such an exchange is entirely obtained using the definition of unimodularity by specifying functions similar to $g$ and $h$ in the example above. The remainder of the section involves fleshing out these technical results, and eventually demonstrating Theorem \ref{judiciality_of_disconnected_graphs} followed by a few interesting corollaries.

\begin{lem}\label{lemma_lemma_judiciality_of_disconnected_graphs}
Under the assumptions of Theorem \ref{judiciality_of_disconnected_graphs}, $\mu$ is unimodular if and only if
\[
\sum_{k \in I} \sum_{i \in \Rcc(k)} \sum_{j \in N(i)} f[X^k,i,j] \cdot \mu[X^k,i] = \sum_{k \in I} \sum_{i \in \Rcc(k)} \sum_{j \in N(i)} f[X^k,j,i] \cdot \mu[X^k,i]
\]
for all nonnegative functions $f$ on $\dGr$; and $\mu^l$ is unimodular if and only if
\[
\sum_{i \in \Rcc(l)} \sum_{j \in N(i)} f[X^l,i,j] \cdot \mu^l[X^l,i] = \sum_{i \in \Rcc(l)} \sum_{j \in N(i)} f[X^l,j,i] \cdot \mu^l[X^l,i]
\]
for all nonnegative functions $f$ on $\dGr$ and $l \in I$.
\end{lem}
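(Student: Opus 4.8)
The plan is to observe that this lemma is simply the reformulation of unimodularity for measures sustained by countable graphs — the criterion ($\star$) stated earlier — applied once to the ambient graph $X$ and once to each component $X^l$, after unwinding the combinatorics of the disjoint union.

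First I would record the structural facts about rooted and birooted connected components. Since $X = \sum_{k \in I} b_k X^k$ is a disjoint union of connected graphs, the proposition on $\Rcc$ of disjoint unions gives $\Rcc(X) = \bigcup_{k \in I} \Rcc(X^k)$, and because the $X^k$ are pairwise distinct connected graphs, Proposition \ref{equiv_of_rcc} forces $\Rcc(X^k) \cap \Rcc(X^{k'}) = \emptyset$ for $k \neq k'$, so the union is disjoint. The multiplicities $b_k$ are irrelevant here: extra isomorphic copies of a component contribute no new rooted connected components. Hence $\sum_{[X_x,x] \in \Rcc(X)}$ may be rewritten as the iterated sum $\sum_{k \in I} \sum_{i \in \Rcc(k)}$, and since every neighbour of a vertex lying in a copy of $X^k$ lies in that same copy, the inner sum $\sum_{y \in N(x)}$ becomes $\sum_{j \in N_{X^k}(i)}$ under this identification. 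The same remarks apply to $\BRcc$.

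Next I would invoke the reformulation of unimodularity: a measure $\mu$ sustained by $X$ is unimodular if and only if ($\star$) holds for every nonnegative function $f$ on $\dGr$ — by the accompanying remark we may drop the requirement that $f$ vanish off $\BRcc(X)$, and in any event every triple $[X^k,i,j]$ occurring in the displayed sums is automatically a birooted connected component of $X^k$, hence of $X$, so only $f$ restricted to $\BRcc(X)$ matters. Substituting the decomposition of the index set from the previous step converts ($\star$) verbatim into the first displayed equation, giving the first equivalence. For the second equivalence I would apply the very same reformulation directly to the connected graph $X^l$ with the measure $\mu^l = \Psi(X^l)$, which is sustained by $X^l$: this yields exactly the second displayed equation.

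I do not expect any genuine obstacle; the content is entirely bookkeeping. The only points that need care are verifying that $\Rcc(X)$ and $\BRcc(X)$ split as disjoint unions indexed by $I$, that neighbourhoods are computed inside a single component, and that the copy-counts $b_k$ play no role at the level of isomorphism classes — after which both equivalences are immediate instances of the ($\star$)-reformulation.
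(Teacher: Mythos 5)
Your proposal is correct and follows essentially the same route as the paper: decompose $\Rcc(X)$ as the disjoint union $\bigcup_{k \in I} \Rcc(X^k)$, rewrite the double sums in the reformulation $(\star)$ of unimodularity as the triple sums indexed by $k \in I$, and apply the same reformulation to each connected $X^l$ with $\mu^l = \Psi(X^l)$. Your extra remarks on disjointness via Proposition \ref{equiv_of_rcc}, on the irrelevance of the multiplicities $b_k$, and on why only the restriction of $f$ to $\BRcc(X)$ matters merely make explicit what the paper's terser proof leaves implicit.
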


\begin{proof}
Since $\{X^k ~:~ k \in I\}$ consists of pairwise distinct connected judicial graphs, it follows that
\[
\Rcc(X) = \bigcup_{k \in I} \Rcc(X^k)
\]
is a disjoint union. Then
\[
\sum_{[X_x,x] \in \Rcc(X)} \sum_{y \in N(x)} f[X_x,x,y] \cdot \mu[X_x,x] = \sum_{k \in I} \sum_{i \in \Rcc(k)} \sum_{j \in N(i)} f[X^k,i,j] \cdot \mu[X^k,i]
\]
and
\[
\sum_{[X_x,x] \in \Rcc(X)} \sum_{y \in N(x)} f[X_x,y,x] \cdot \mu[X_x,x] = \sum_{k \in I} \sum_{i \in \Rcc(k)} \sum_{j \in N(i)} f[X^k,j,i] \cdot \mu[X^k,i]
\]
for all nonnegative functions $f$ on $\dGr$. The remainder follows immediately using the definition of unimodularity.
\end{proof}

\begin{lem}\label{lemma_judiciality_of_disconnected_graphs}
Under the assumptions of Lemma \ref{lemma_lemma_judiciality_of_disconnected_graphs},
\[
\mu[X^l,x]\mu^l[X^l,y] = \mu[X^l,y]\mu^l[X^l,x]
\]
for all adjacent vertices $x$ and $y$ such that $[X^l,x]$ and $[X^l,y]$ are rooted connected components of $X^l$, and $l \in I$.
\end{lem}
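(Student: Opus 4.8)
The plan is to mimic the exchange-of-arguments trick illustrated in the worked example, using the unimodularity of $\mu$ and of $\mu^l$ against each other through a carefully chosen pair of test functions on $\dGr$. Fix $l \in I$ and adjacent vertices $x,y$ with $[X^l,x],[X^l,y] \in \Rcc(X^l)$. If $\mu^l[X^l,x] = 0$, then by Proposition \ref{sustained_implies_strictly_sustained} applied to the connected judicial graph $X^l$ this is impossible; so $\mu^l[X^l,x] > 0$ and likewise $\mu^l[X^l,y] > 0$, which lets us freely divide by these quantities. (Alternatively one can argue symmetrically without this remark, but knowing the denominators are nonzero simplifies the bookkeeping.)

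First I would define a nonnegative function $g$ on $\dGr$ supported on the birooted components of $X^l$ of the form $[X^l,x,j]$ for $j \in N(x)$, setting $g[X^l,x,j] = \mu[X^l,y]\,\mu^l[X^l,x]$ there (a constant), and $g = 0$ elsewhere. Plugging $g$ into the unimodularity identity for $\mu$ from Lemma \ref{lemma_lemma_judiciality_of_disconnected_graphs}, the left side collects the mass sent \emph{out} of the orbit of $x$ and the right side the mass sent \emph{into} it; the only contributions come from edges between the orbit of $x$ and its neighbours, and since the neighbours of $x$ all lie in one orbit by vertex structure of a single component — more precisely, summing over $i \in \Rcc(l)$ and $j \in N(i)$ the characteristic-type terms count $|G_x j \cap \cdots|$-style quantities — we obtain an equation relating $\mu[X^l,x]$ and the $\mu$-measures of orbits adjacent to $x$, each weighted by the corresponding $\mu^l$-factor. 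Doing the symmetric thing with a second function $h$ supported on $[X^l,y,j]$, $j \in N(y)$, with constant value $\mu[X^l,x]\,\mu^l[X^l,y]$, and using unimodularity of $\mu^l$ from the same lemma, yields a matching equation with the roles of $\mu$ and $\mu^l$ swapped. Comparing the two, the cross terms cancel and what survives is exactly $\mu[X^l,x]\mu^l[X^l,y] = \mu[X^l,y]\mu^l[X^l,x]$.

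The cleanest route, I suspect, is to avoid summing over all neighbours and instead invoke Proposition \ref{criterion_for_unimodularity} directly: since $\mu$ restricted to $\Rcc(X^l)$ and $\mu^l$ are both (up to normalization) unimodular-type weightings sustained by the connected graph $X^l$, the criterion gives $|G_x y|\,\mu[X^l,x] = |G_y x|\,\mu[X^l,y]$ and $|G_x y|\,\mu^l[X^l,x] = |G_y x|\,\mu^l[X^l,y]$ where $G = \Aut(X^l)$; dividing (the stabilizer indices are positive and finite) gives $\mu[X^l,x]/\mu[X^l,y] = \mu^l[X^l,x]/\mu^l[X^l,y]$, i.e.\ the claim. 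The subtlety — and the one place care is needed — is that $\mu$ is a priori only known to be unimodular as a measure on all of $\Gr$ sustained by the \emph{disconnected} graph $X$, so one must first check that the restriction of $\mu$ to $\Rcc(X^l)$ still satisfies the hypotheses of Proposition \ref{criterion_for_unimodularity}, namely that the defining characteristic-function test applied within $X$ only ever picks up terms from the single component $X^l$ (because $[X^l,x,y] \in \dGr$ determines the component, two distinct components contribute nothing to each other's birooted identities).

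The main obstacle I anticipate is precisely this reduction: verifying rigorously that unimodularity of $\mu$ on $X$ restricts to the per-component identity on $X^l$ — concretely, showing that when we feed the characteristic function of $\{[X^l,x,y]\}$ into Lemma \ref{lemma_lemma_judiciality_of_disconnected_graphs}'s identity, the triple sum over $k \in I$, $i \in \Rcc(k)$, $j \in N(i)$ collapses to the sum over $i \in \Rcc(l)$, $j \in N(i)$ with no interference from $k \neq l$. Once that is pinned down, the rest is the short division argument above, and one should remember to handle the trivial case $[X^l,x] = [X^l,y]$ (where the identity reads $0=0$) separately, as in the proof of Lemma \ref{orbit_neighbourhood_ratio}.
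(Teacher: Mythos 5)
Your final argument (the ``cleanest route'') is correct, but it is genuinely different from the paper's proof. The paper never invokes Proposition \ref{criterion_for_unimodularity} here: it feeds into the identity of Lemma \ref{lemma_lemma_judiciality_of_disconnected_graphs} the characteristic function of the set of birooted classes $[X^l,w,z]$ with $w$ in the orbit of $x$ and $z$ in the orbit of $y$, obtaining $|N(x)\cap[y]|\,\mu[X^l,x]=|N(y)\cap[x]|\,\mu[X^l,y]$, and then reuses the \emph{same} function reweighted by the ratio $\mu[X^l,\cdot]/\mu^l[X^l,\cdot]$ against the unimodularity of $\mu^l$, so that comparing the two identities yields the claim; this needs $\mu^l>0$ on $\Rcc(X^l)$ (strict sustenance) to form the quotient. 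Your route instead extracts the stabilizer-orbit identity $|G_xy|\,\mu[X^l,x]=|G_yx|\,\mu[X^l,y]$ for $\mu$ (with $G=\Aut(X^l)$) and the same identity for $\mu^l$ via Proposition \ref{criterion_for_unimodularity}, then eliminates the orbit counts; cross-multiplying rather than dividing even removes any worry about vanishing denominators, so you need strict sustenance of $\mu^l$ only as a convenience. The one step you flag as the ``main obstacle'' --- that unimodularity of $\mu$, sustained by the disconnected $X$, yields the per-component identity on $X^l$ --- is indeed the point that must be written out, but your justification is the right one and it is short: if the characteristic function of $\{[X^l,a,b]\}$ is nonzero at $[X^k,i,j]$ then $X^k=X^l$, hence $k=l$ since the $X^k$ are pairwise distinct, so the triple sum collapses and the computation from the proof of Proposition \ref{criterion_for_unimodularity} (via Lemma \ref{birooted_equiv_to_stabilizer}) goes through verbatim; note the paper's own proof performs exactly the same collapse in its first displayed equality, so this is not an extra cost of your approach. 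One caveat: your first sketched construction (the functions $g$ and $h$ constant on \emph{all} of $N(x)$, resp.\ $N(y)$) would not isolate the pair of orbits of $x$ and $y$ --- it brings in every orbit adjacent to $[x]$ --- so the claimed cancellation of cross terms is not justified as stated; it is only the criterion-based route that stands, and it does.
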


\begin{proof}
Let $l \in I$, and let $[X^l,x]$ and $[X^l,y]$ be rooted connected components of $X^l$. If $[X^l,x] = [X^l,y]$, the result holds. Suppose that $[X^l,x] \neq [X^l,y]$ and $y \in N(x)$. Denote by $f$ the characteristic function of the set
\[
\{[X^l,w,z] \in \dGr ~:~ w \in [x] ~\text{ and }~ z \in [y]\}
\]
where $[x] = \Aut(X^l)x$ and $[y] = \Aut(X^l)y$ are the orbits of $x$ and $y$, respectively. Define the nonnegative function $f_l$ on $\dGr$ as follows:
\[
f_l[X^l,i,j] = \frac{f[X^l,i,j] \cdot \mu[X^l,i]}{\mu^l[X^l,i]}
\]
for all adjacent vertices $i$ and $j$ of $X^l$, and $f_l = 0$ otherwise. Note that $f[X^l,x,j] = 1$ if and only if $j \in N(x) \cap [y]$, and $f[X^l,j,y] = 1$ if and only if $j \in N(y) \cap [x]$.

Since $\mu$ is unimodular, Lemma \ref{lemma_lemma_judiciality_of_disconnected_graphs} implies that
\begin{align*}
\sum_{k \in I} \sum_{i \in \Rcc(k)} \sum_{j \in N(i)} f[X^k,i,j] \cdot \mu[X^k,i] &= \sum_{i \in \Rcc(l)} \sum_{j \in N(i)} f[X^l,i,j] \cdot \mu[X^l,i]\\
           &= \sum_{j \in N(x)} f[X^l,x,j] \cdot \mu[X^l,x]\\
           &= \sum_{j \in N(x) \cap [y]} \mu[X^l,x]\\
           &= |N(x) \cap [y]|\mu[X^l,x]
\end{align*}
and
\begin{align*}
\sum_{k \in I} \sum_{i \in \Rcc(k)} \sum_{j \in N(i)} f[X^k,j,i] \cdot \mu[X^k,i] &= \sum_{i \in \Rcc(l)} \sum_{j \in N(i)} f[X^l,j,i] \cdot \mu[X^l,i]\\
           &= \sum_{j \in N(y)} f[X^l,j,y] \cdot \mu[X^l,y]\\
           &= \sum_{j \in N(y) \cap [x]} \mu[X^l,y]\\
           &= |N(y) \cap [x]|\mu[X^l,y]
\end{align*}
are equal, and so
\[
|N(x) \cap [y]|\mu[X^l,x] = |N(y) \cap [x]|\mu[X^l,y].
\]
Similarly, the unimodularity of $\mu^l$ tells us that
\begin{align*}
\sum_{i \in \Rcc(l)} \sum_{j \in N(i)} f_l[X^l,i,j] \cdot \mu^l[X^l,i] &= \sum_{j \in N(x)} f_l[X^l,x,j] \cdot \mu^l[X^l,x]\\
&= \sum_{j \in N(x) \cap [y]} \left(\frac{\mu[X^l,x]}{\mu^l[X^l,x]}\right) \mu^l[X^l,x]\\
&= \sum_{j \in N(x) \cap [y]} \mu[X^l,x]\\
&= |N(x) \cap [y]|\mu[X^l,x]
\end{align*}
and
\begin{align*}
\sum_{i \in \Rcc(l)} \sum_{j \in N(i)} f_l[X^l,j,i] \cdot \mu^l[X^l,i] &= \sum_{j \in N(y)} f_l[X^l,j,y]) \cdot \mu^l[X^l,y]\\
&= \sum_{j \in N(y) \cap [x]} \left(\frac{\mu[X^l,j]}{\mu^l[X^l,j]}\right) \mu^l[X^l,y]\\
&= \sum_{j \in N(y) \cap [x]} \left(\frac{\mu[X^l,x]}{\mu^l[X^l,x]}\right) \mu^l[X^l,y]\\
&= |N(y) \cap [x]|\left(\frac{\mu[X^l,x]\mu^l[X^l,y]}{\mu^l[X^l,x]}\right)
\end{align*}
are equal where $\mu[X^l,j] = \mu[X^l,x]$ and $\mu^l[X^l,j] = \mu^l[X^l,x]$ because $j \in [x]$. Thus
\[
|N(x) \cap [y]|\mu[X^l,x] = |N(y) \cap [x]|\left(\frac{\mu[X^l,x]\mu^l[X^l,y]}{\mu^l[X^l,x]}\right).
\]
Observe that $N(y) \cap [x]$ is nonempty because $x \in N(y)$. By combining the equations above,
\[
|N(y) \cap [x]|\mu[X^l,y] = |N(y) \cap [x]|\left(\frac{\mu[X^l,x]\mu^l[X^l,y]}{\mu^l[X^l,x]}\right),
\]
and so $\mu[X^l,y]\mu^l[X^l,x] = \mu[X^l,x]\mu^l[X^l,y]$ because $|N(y) \cap [x]|$ is nonzero.
\end{proof}

\begin{cor}\label{corollary_judiciality_of_disconnected_graphs}
Under the assumptions of Lemma \ref{lemma_judiciality_of_disconnected_graphs},
\[
\mu[X^l,x]\mu^l[X^l,y] = \mu[X^l,y]\mu^l[X^l,x]
\]
for all vertices $x$ and $y$ of $X^l$, and $l \in I$.
\end{cor}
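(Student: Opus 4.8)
The plan is to bootstrap the adjacent-vertex identity supplied by Lemma \ref{lemma_judiciality_of_disconnected_graphs} up to arbitrary pairs of vertices of $X^l$ by walking along a path, in the same spirit as the proof of Theorem \ref{uniqueness_of_sustained_unimodular_measures}. Fix $l \in I$ and vertices $x,y$ of $X^l$. If $x = y$ the claimed equality is trivial, so assume $x \neq y$. Since $X^l$ is connected, there is a path $(x_0,x_1,\ldots,x_k)$ in $X^l$ with $x_0 = x$ and $x_k = y$; and because $X^l$ is connected, $(X^l)_{x_i} = X^l$, so each $[X^l,x_i]$ is indeed a rooted connected component of $X^l$ and Lemma \ref{lemma_judiciality_of_disconnected_graphs} applies to every consecutive pair $x_i,x_{i+1}$, giving
\[
\mu[X^l,x_i]\mu^l[X^l,x_{i+1}] = \mu[X^l,x_{i+1}]\mu^l[X^l,x_i]
\]
for each $i \in \{0,1,\ldots,k-1\}$.

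The one point that needs care is that the intermediate denominators do not vanish, so that the telescoping is legitimate. Here $\mu^l = \Psi(X^l)$ is, by hypothesis, a unimodular measure sustained by the connected graph $X^l$, so Proposition \ref{sustained_implies_strictly_sustained} shows it is \emph{strictly} sustained, i.e. $\mu^l[X^l,z] > 0$ for every $z \in V(X^l)$. Consequently each of the $k$ identities above may be rewritten as the equality of ratios $\mu[X^l,x_i]/\mu^l[X^l,x_i] = \mu[X^l,x_{i+1}]/\mu^l[X^l,x_{i+1}]$, and chaining these along the path yields
\[
\frac{\mu[X^l,x]}{\mu^l[X^l,x]} = \frac{\mu[X^l,y]}{\mu^l[X^l,y]},
\]
after which clearing denominators gives $\mu[X^l,x]\mu^l[X^l,y] = \mu[X^l,y]\mu^l[X^l,x]$, as required.

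There is essentially no obstacle beyond this bookkeeping; if one prefers to avoid dividing outright, the same conclusion follows by multiplying together the $k$ identities $\mu[X^l,x_i]\mu^l[X^l,x_{i+1}] = \mu[X^l,x_{i+1}]\mu^l[X^l,x_i]$ and cancelling the common factor $\prod_{i=1}^{k-1}\mu^l[X^l,x_i]$, which is nonzero for exactly the reason just noted. Since the statement to be proved is symmetric in $x$ and $y$, this settles all cases.
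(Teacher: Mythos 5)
Your proof is correct and follows essentially the same route as the paper: chain the adjacent-vertex identity of Lemma \ref{lemma_judiciality_of_disconnected_graphs} along a path in the connected graph $X^l$ and telescope; indeed your version is slightly more careful than the paper's, since you divide only by the values $\mu^l[X^l,x_i]$, which are positive by Proposition \ref{sustained_implies_strictly_sustained}, whereas the paper's displayed product of ratios also places the possibly-zero values $\mu[X^l,x_i]$ in denominators. One small caveat: your parenthetical ``multiply the $k$ identities and cancel'' variant is not quite right as stated, because after cancelling $\prod_{i=1}^{k-1}\mu^l[X^l,x_i]$ both sides still carry the common factor $\prod_{i=1}^{k-1}\mu[X^l,x_i]$, which may vanish --- but this aside is unnecessary, as your main argument via the equal ratios $\mu[X^l,x_i]/\mu^l[X^l,x_i]$ is already complete.
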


%
%
\begin{proof}
Let $l \in I$, and let $x$ and $y$ be vertices of $X^l$. Since $X^l$ is connected, there is a path $(x_0,x_1,\ldots,x_m)$ with $x_0 = x$ and $x_m = y$ in $X^l$. Note that $[X^l,x_i] \in \Rcc(X^l)$ for all $i \in \{0,1,\ldots,m\}$. By Lemma \ref{lemma_judiciality_of_disconnected_graphs},
\begin{empheq}[left=\empheqlbrace]{align*}
\mu[X^l,x_0]\mu^l[X^l,x_1]     &= \mu[X^l,x_1]\mu^l[X^l,x_0]\\
\mu[X^l,x_1]\mu^l[X^l,x_2]     &= \mu[X^l,x_2]\mu^l[X^l,x_1]\\
                               &\setbox0\hbox{=}\mathrel{\makebox[\wd0]{\vdots}}\\
\mu[X^l,x_{m-1}]\mu^l[X^l,x_m] &= \mu[X^l,x_m]\mu^l[X^l,x_{m-1}],
\end{empheq}
and so
\[
\frac{\mu[X^l,x_0]}{\mu[X^l,x_1]} \frac{\mu[X^l,x_1]}{\mu[X^l,x_2]} \cdots \frac{\mu[X^l,x_{m-1}]}{\mu[X^l,x_m]} = \frac{\mu^l[X^l,x_0]}{\mu^l[X^l,x_1]} \frac{\mu^l[X^l,x_1]}{\mu^l[X^l,x_2]} \cdots \frac{\mu^l[X^l,x_{m-1}]}{\mu^l[X^l,x_m]}.
\]
Hence $\mu[X^l,x]\mu^l[X^l,y] = \mu[X^l,y]\mu^l[X^l,x]$, as required.
\end{proof}

\begin{proof}[Proof of Theorem \ref{judiciality_of_disconnected_graphs}]
Suppose that $\mu \in \U$ is sustained by $X$. To show that
\[
\mu = \sum_{k \in I} \left(\sum_{x \in \Rcc(k)} \mu[X^k,x]\right) \mu^k,
\]
it suffices to prove that the left-hand side and the right-hand side agree on the rooted connected components of $X$. Let $l \in I$, and let $[X^l,y] \in \Rcc(X^l)$. Then
\begin{align*}
\sum_{k \in I} \left(\sum_{x \in \Rcc(k)} \mu[X^k,x]\right) \mu^k[X^l,y] &= \sum_{x \in \Rcc(l)} \mu[X^l,x]\mu^l[X^l,y]\\
         &= \sum_{x \in \Rcc(l)} \mu[X^l,y]\mu^l[X^l,x]\\
         &= \sum_{x \in \Rcc(l)} \mu^l[X^l,x] \cdot \mu[X^l,y]\\
         &= 1 \cdot \mu[X^l,y]\\
         &= \mu[X^l,y]
\end{align*}
where the second equality holds by Corollary \ref{corollary_judiciality_of_disconnected_graphs}.
\end{proof}

%
%
\subsection{The case of disconnected finite graphs}

To put the main result into perspective, consider the special case of finite graphs and their laws. The particular case of the following result is demonstrated in the author's Honours project \cite{artemenko11a}. Let us instead show how it arises from Theorem \ref{judiciality_of_disconnected_graphs}. Recall that $\Psi(X)$ is defined by $\Psi(X)[X_x,x] = |\Aut(X)x|/|V(X)|$ for all $[X_x,x] \in \Rcc(X)$ if $X$ is a finite graph.

\begin{cor}\label{judiciality_of_disconnected_graphs_corollary}
Suppose that $X = \sum_{k=1}^\omega b_kX^k$ where $\{X^k ~:~ 1 \leq k \leq \omega\}$ is a set of pairwise distinct finite connected graphs and $\{b_k ~:~ 1 \leq k \leq \omega\}$ is a set of positive integers for some positive integer $\omega$. Then
\[
\Psi(X) = \sum_{k=1}^\omega \left(\frac{b_k|V(X^k)|}{|V(X)|}\right) \Psi(X^k).
\]
In particular, if $m$ and $n$ are nonnegative integers with $(m,n) \neq (0,0)$, and $G$ and $H$ are distinct finite connected graphs, then
\[
\Psi(mG + nH) = \frac{m|V(G)| \cdot \Psi(G) + n|V(H)| \cdot \Psi(H)}{m|V(G)| + n|V(H)|}.
\]
\end{cor}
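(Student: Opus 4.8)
The plan is to derive the general formula as a direct instance of Theorem~\ref{judiciality_of_disconnected_graphs}, and then to evaluate the resulting coefficients by means of Proposition~\ref{integral_wrt_law}. First I would record the routine observations needed to invoke Theorem~\ref{judiciality_of_disconnected_graphs}: each $X^k$ is a finite connected graph, hence connected and judicial, and its law $\Psi(X^k)$ is the unique unimodular measure it sustains by Theorem~\ref{uniqueness_of_sustained_unimodular_measures}; moreover $\Psi(X)$ is itself a unimodular measure sustained by $X$, since the law of any finite graph (connected or not) is unimodular. Applying Theorem~\ref{judiciality_of_disconnected_graphs} with $I = \{1,\ldots,\omega\}$ and $\mu = \Psi(X)$ then yields
\[
\Psi(X) = \sum_{k=1}^\omega \left(\sum_{x \in \Rcc(k)} \Psi(X)[X^k,x]\right) \Psi(X^k).
\]

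Next I would compute the coefficient $c_k := \sum_{x \in \Rcc(k)} \Psi(X)[X^k,x]$. Because $\Psi(X)$ is sustained by $X$ and $\Rcc(X^k) \subseteq \Rcc(X)$, this sum is exactly the measure $\Psi(X)(\Rcc(X^k))$ of the finite (hence Borel) subset $\Rcc(X^k)$ of $\Gr$. Taking $g$ to be the characteristic function of $\Rcc(X^k)$ in Proposition~\ref{integral_wrt_law} gives
\[
c_k = \int g \, d\Psi(X) = \frac{1}{|V(X)|}\,\#\{v \in V(X) : [X_v,v] \in \Rcc(X^k)\}.
\]
For $v \in V(X)$ one has $[X_v,v] \in \Rcc(X^k)$ if and only if $\Rcc(X_v) \cap \Rcc(X^k) \neq \emptyset$, which by Proposition~\ref{equiv_of_rcc} (both $X_v$ and $X^k$ being connected) holds precisely when $X_v = X^k$, i.e.\ precisely when $v$ lies in one of the $b_k$ copies of $X^k$ inside $X$. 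There are $b_k|V(X^k)|$ such vertices, so $c_k = b_k|V(X^k)|/|V(X)|$; substituting into the displayed formula proves the main assertion. Note that the pairwise-distinctness of the $X^k$ is exactly what guarantees that no vertex of $X$ lies in copies of two different $X^k$'s, so this count is unambiguous.

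For the ``in particular'' statement I would split on whether $m$ and $n$ are both positive. If both are positive, it is the main formula with $\omega = 2$, $X^1 = G$, $X^2 = H$, $b_1 = m$, $b_2 = n$, and $|V(X)| = m|V(G)| + n|V(H)|$. If one of them vanishes, say $n = 0$ and $m > 0$, then $X = mG$ and the main formula with $\omega = 1$ (equivalently Proposition~\ref{multiple_copies_of_connected_graph_is_judicial}) gives $\Psi(mG) = \Psi(G)$, which agrees with the claimed right-hand side $\bigl(m|V(G)|\cdot\Psi(G)\bigr)/\bigl(m|V(G)|\bigr)$; the case $m = 0$ is symmetric.

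The step that I expect to require the most care is the evaluation of $c_k$: one must keep the orbits of $\Aut(X)$ (which determine the weights of $\Psi(X)$) distinct from the orbits of $\Aut(X^k)$ (which index $\Rcc(X^k)$), and the cleanest way to avoid conflating them is precisely to reinterpret $c_k$ as the $\Psi(X)$-measure of a set and apply Proposition~\ref{integral_wrt_law}. Everything else in the argument is bookkeeping.
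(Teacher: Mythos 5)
Your argument is correct, and its first half coincides with the paper's: both apply Theorem~\ref{judiciality_of_disconnected_graphs} with $\mu = \Psi(X)$ and then must identify the coefficient $\sum_{x \in \Rcc(k)} \Psi(X)[X^k,x]$ with $b_k|V(X^k)|/|V(X)|$. Where you genuinely diverge is in that identification. The paper proves a separate structural lemma (Lemma~\ref{lemma_judiciality_of_disconnected_graphs_corollary}), showing $|\Aut(X)y| = b_k|\Aut(X^k)y|$ for $y \in V(X^k)$ by explicitly extending and restricting automorphisms between $X$ and its copies of $X^k$, and then sums the law values orbit by orbit. You instead reinterpret the coefficient as $\Psi(X)(\Rcc(X^k))$ and apply Proposition~\ref{integral_wrt_law} to the characteristic function of $\Rcc(X^k)$, reducing everything to counting the vertices $v$ with $X_v = X^k$, which Proposition~\ref{equiv_of_rcc} and pairwise distinctness of the $X^k$ make unambiguous. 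Your route is shorter and avoids the automorphism bookkeeping entirely, at the cost of not obtaining the orbit-size relation, which in the paper is a reusable fact of independent interest and makes the coefficient visibly a sum of values of the law of $X$. You also treat the degenerate cases $m=0$ or $n=0$ of the ``in particular'' clause explicitly (where the two-term formula does not literally apply because the $b_k$ must be positive), which the paper leaves implicit; that is a small but genuine improvement in completeness.
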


To ease the proof of Corollary \ref{judiciality_of_disconnected_graphs_corollary}, it is wise to isolate the following technical result, which relates the size of an orbit of $X$ to that of an orbit of one of its components.

\begin{lem}\label{lemma_judiciality_of_disconnected_graphs_corollary}
Let $X$ be a finite graph. Suppose that $X = mG + Y$ for some positive integer $m$ where $G$ is a connected graph, which is not a connected component of $Y$. Then $|\Aut(X)o| = m|\Aut(G)o|$ for all $o \in V(G)$.
\end{lem}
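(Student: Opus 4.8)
The plan is to exploit the fact that an automorphism of $X$ must send each connected component to an isomorphic one. Write the $m$ copies of $G$ inside $X$ as $G^{(1)},\dots,G^{(m)}$, and fix $o \in V(X)$ with $o \in V(G^{(1)})$; this is no loss of generality since the copies are interchangeable. Since $G$ is connected, $X_o = G^{(1)}$. By Lemma \ref{component_to_component}, any $\varphi \in \Aut(X)$ satisfies $\varphi(X_o) = X_{\varphi(o)}$, so $X_{\varphi(o)}$ is a connected component of $X$ isomorphic to $G$. By hypothesis $G$ is not a component of $Y$, so the only components of $X$ isomorphic to $G$ are $G^{(1)},\dots,G^{(m)}$. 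Hence $\Aut(X)o \subseteq \bigcup_{i=1}^m V(G^{(i)})$, and since this union is disjoint,
\[
|\Aut(X)o| = \sum_{i=1}^m \bigl|\Aut(X)o \cap V(G^{(i)})\bigr|.
\]

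Next I would show each summand equals $|\Aut(G)o|$. For $i = 1$: if $\varphi \in \Aut(X)$ and $\varphi(o) \in V(G^{(1)})$, then $\varphi(G^{(1)}) = X_{\varphi(o)} = G^{(1)}$ by Lemma \ref{component_to_component}, so $\varphi$ restricts to an automorphism of $G^{(1)}$ and $\varphi(o)$ lies in the $\Aut(G^{(1)})$-orbit of $o$; conversely, every automorphism of $G^{(1)}$ extends, by the identity on the remaining components, to an automorphism of $X$. Thus $\Aut(X)o \cap V(G^{(1)})$ is exactly the orbit of $o$ under $\Aut(G^{(1)}) \cong \Aut(G)$, which has cardinality $|\Aut(G)o|$. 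For general $i$, pick an isomorphism $\tau : G^{(1)} \to G^{(i)}$ and let $\sigma_i \in \Aut(X)$ be the involution equal to $\tau$ on $V(G^{(1)})$, to $\tau^{-1}$ on $V(G^{(i)})$, and to the identity on every other component. Since $\sigma_i\Aut(X) = \Aut(X)$, the map $\sigma_i$ carries the set $\Aut(X)o$ onto itself, and it sends $V(G^{(1)})$ bijectively onto $V(G^{(i)})$; hence it restricts to a bijection $\Aut(X)o \cap V(G^{(1)}) \to \Aut(X)o \cap V(G^{(i)})$. Therefore $|\Aut(X)o \cap V(G^{(i)})| = |\Aut(X)o \cap V(G^{(1)})| = |\Aut(G)o|$ for every $i$, and summing yields $|\Aut(X)o| = m|\Aut(G)o|$.

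The routine parts are the two extension/restriction arguments; the one place to be careful is the use of the hypothesis that $G$ is not a connected component of $Y$, which is exactly what prevents an automorphism of $X$ from carrying $o$ out of the union of the $m$ copies of $G$ — without it the orbit could be strictly larger. One could alternatively package the component-permuting phenomenon as an isomorphism $\Aut(X) \cong \bigl(\Aut(G) \wr \mathrm{Sym}(m)\bigr) \times \Aut(Y)$ and read off the orbit of $o$ directly, but the computation above avoids introducing wreath products.
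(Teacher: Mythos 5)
Your proposal is correct and follows essentially the same route as the paper's proof: both confine the orbit of $o$ to the $m$ copies of $G$ using Lemma \ref{component_to_component} together with the hypothesis that $G$ is not a component of $Y$, identify the portion of the orbit inside one copy with $\Aut(G)o$ via restriction and extension by the identity, and use the copy-swapping automorphism to account for the factor $m$. The only difference is cosmetic: you count the orbit as a disjoint union of $m$ equal pieces, while the paper establishes the set identity $\Aut(X)(o,0) = \Aut(G)o \times \{0,1,\ldots,m-1\}$ directly.
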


\begin{proof}
Let $o \in V(G)$. Since $X = mG + Y$, it is possible to write
\[
V(X) = (V(G) \times \{0,1,\ldots,m - 1\}) \cup V(Y),
\]
and identify the vertices $(o,0)$ and $o$. It suffices to prove that $\Aut(X)(o,0) = \Aut(G)o \times \{0,1,\ldots,m - 1\}$. Given $(x,i) \in \Aut(X)(o,0)$, there is an automorphism $\varphi$ on $X$ such that $\varphi(o,0) = (x,i)$, and so
\[
[G \times 0,o] = [X_o,o] = [\varphi(X_o),\varphi(o)] = [X_{\varphi(o)},\varphi(o)] = [X_{(x,i)},(x,i)]
\]
where $G \times 0$ is the $0$th copy of $G$ in $X$. Since $G$ is not a connected component of $Y$, their sets of rooted connected components are disjoint, which means $X_{(x,i)} = G \times i$, and so $x \in V(G)$. Furthermore, the inclusion $\iota : G \to G \times 0$, the projection $\pi : G \times i \to G$, and the restriction $\varphi_G$ of $\varphi$ to $G \times 0$ whose image is
\[
\varphi_G(G \times 0) = \varphi(X_o) = X_{\varphi(o)} = G \times i
\]
are isomorphisms. Their composition $\pi \circ \varphi_G \circ \iota$ is an automorphism on $G$ with
\[
(\pi \circ \varphi_G \circ \iota)(o) = \pi(\varphi_G(o,0)) = \pi(\varphi(o,0)) = \pi(x,i) = x,
\]
and so $x \in \Aut(G)o$. On the other hand, assume that $i \in \{0,1,\ldots,m - 1\}$ and $x \in \Aut(G)o$. Then there is an automorphism $\varphi$ on $G$ such that $\varphi(o) = x$. Define the function $\hat\varphi : V(X) \to V(X)$ as follows: $\hat\varphi(y,j) = (\varphi(y),j)$ for all $(y,j) \in V(G) \times \{0,1,\ldots,m - 1\}$, and $\hat\varphi$ is the identity on $V(Y)$. Clearly, $\hat\varphi$ is an automorphism on $X$, and so is the function $s$, which exchanges the copies $G \times 0$ and $G \times i$ leaving the rest fixed. Thus the composition $s \circ \hat\varphi$ is an automorphism on $X$ such that $(s \circ \hat\varphi)(o,0) = s(\varphi(o),0) = s(x,0) = (x,i)$. Hence $(x,i) \in \Aut(X)(o,0)$, and so
\[
|\Aut(X)o| = |\Aut(X)(o,0)| = |\Aut(G)o \times \{0,1,\ldots,m - 1\}| = m|\Aut(G)o|.\qedhere
\]
\end{proof}

\begin{proof}[Proof of Corollary \ref{judiciality_of_disconnected_graphs_corollary}]
Let $\mu = \Psi(X)$ for convenience. Certainly, $\mu$ is unimodular and sustained by $X$. By Theorem \ref{judiciality_of_disconnected_graphs},
\[
\mu = \sum_{k=1}^\omega \left(\sum_{x \in \Rcc(k)} \mu[X^k,x]\right) \Psi(X^k)
\]
where $x \in \Rcc(k)$ is shorthand for $[X^k,x] \in \Rcc(X^k)$. Let $k \in \{1,2,\ldots,\omega\}$. By Lemma \ref{lemma_judiciality_of_disconnected_graphs_corollary}, $|\Aut(X)y| = b_k|\Aut(X^k)y|$ for all $y \in V(X^k)$. Then
\[
b_k|V(X^k)| = \sum_{x \in V(X^k)} b_k = \sum_{x \in \Rcc(k)} b_k|\Aut(X^k)x| = \sum_{x \in \Rcc(k)} |\Aut(X)x|,
\]
and so
\[
\sum_{x \in \Rcc(k)} \mu[X^k,x] = \sum_{x \in \Rcc(k)} \Psi(X)[X^k,x] = \sum_{x \in \Rcc(k)} \frac{|\Aut(X)x|}{|V(X)|} = \frac{b_k|V(X^k)|}{|V(X)|}
\]
using the definition of the law of $X$, which demonstrates the result.
\end{proof}

%
%
\subsection{Non-null components of a judicial graph are judicial}

Having shown that the union of judicial graphs forms another such graph, the reader may wonder if every disconnected judicial graph has judicial components. Unfortunately, by assigning a measure of zero to the lawless components, it is clear that this is not true. However, with an additional assumption on the connected components, a weaker statement is true.

\begin{prop}
Let $X$ be a judicial graph that sustains a measure $\mu \in \U$. If $Y$ is a connected component of $X$ and $\mu[Y,y] \neq 0$ for some $y \in V(Y)$, then $Y$ is judicial.
\end{prop}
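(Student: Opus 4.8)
The plan is to extract from $\mu$ a unimodular measure sustained by the single component $Y$ by restricting $\mu$ to $\Rcc(Y)$ and renormalizing. Since $Y$ is a connected component of $X$, Lemma~\ref{component_to_component} and Proposition~\ref{same_orbit_same_rcc} guarantee that $\Rcc(Y)$ is precisely the set of rooted connected components of $X$ whose underlying graph is $Y$, and this set is disjoint from the rooted connected components coming from the other components of $X$. In particular $\Rcc(Y) \subseteq \Rcc(X)$, so $\mu$ assigns a well-defined value $\mu[Y,z]$ to each $[Y,z] \in \Rcc(Y)$. Set $c = \sum_{[Y,z] \in \Rcc(Y)} \mu[Y,z]$ and define $\nu$ on $\Gr$ by $\nu[Y,z] = \mu[Y,z]/c$ for $[Y,z] \in \Rcc(Y)$ and $\nu = 0$ elsewhere.

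First I would check $c > 0$ and that $\nu$ is a probability measure: since $\mu[Y,y] \neq 0$ for some $y \in V(Y)$ and measures are nonnegative, $c \geq \mu[Y,y] > 0$; and $\sum_{[Y,z] \in \Rcc(Y)} \nu[Y,z] = c/c = 1$, so $\nu$ is a probability measure sustained by $Y$ (the fact that $Y$ is countable makes $\Rcc(Y)$ countable, so $\nu$ is a genuine Borel measure). Next I would verify unimodularity of $\nu$ using the criterion in Proposition~\ref{criterion_for_unimodularity}: for adjacent vertices $a,b$ of $Y$, both $[Y,a]$ and $[Y,b]$ lie in $\Rcc(Y) \subseteq \Rcc(X)$, and since $Y$ is a connected component of $X$ the stabilizers $G_a b$ and $G_b a$ computed inside $Y$ agree with those computed inside $X$ — more precisely, the argument of Proposition~\ref{same_orbit_same_rcc} shows every automorphism of $Y$ extends to one of $X$ and every automorphism of $X$ restricts to one of $Y$ on $V(Y)$, so the orbits $\Aut(Y)_a b$ and $\Aut(X)_a b$ coincide. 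Applying Proposition~\ref{criterion_for_unimodularity} to the unimodular measure $\mu$ on $X$ gives $|G_a b|\,\mu[Y,a] = |G_b a|\,\mu[Y,b]$; dividing by $c$ yields $|G_a b|\,\nu[Y,a] = |G_b a|\,\nu[Y,b]$ for all adjacent $a,b$ in $Y$. By the converse direction of Proposition~\ref{criterion_for_unimodularity}, $\nu$ is unimodular, hence $Y$ is judicial.

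The main obstacle is the bookkeeping needed to justify that the stabilizer subgroups (equivalently, the birooted connected components $[Y,a,b]$) are the same whether one views $a,b$ inside $Y$ or inside $X$; this is what lets one transfer the unimodularity equation for $\mu$ on $X$ directly to an equation about $\nu$ on $Y$. Everything else is routine: positivity of the normalizing constant comes for free from the hypothesis $\mu[Y,y] \neq 0$, and the criterion of Proposition~\ref{criterion_for_unimodularity} turns both the hypothesis and the conclusion into a finite family of scalar identities that are manifestly preserved under scaling by $1/c$. One could alternatively avoid the criterion and work directly with the defining equation of unimodularity by restricting test functions $f$ on $\dGr$ to those supported on $\BRcc(Y)$, but invoking Proposition~\ref{criterion_for_unimodularity} is cleaner since $Y$ is connected.
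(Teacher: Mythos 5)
Your proposal is correct in substance but follows a different route from the paper. The paper does not renormalize by dividing pointwise identities: it sets $a=\sum_{[Y,x]\in\Rcc(Y)}\mu[Y,x]$, defines $\nu=(\mu\cdot\chi)/a$ with $\chi$ the characteristic function of $\Rcc(Y)$, and then verifies the defining equation of unimodularity for $\nu$ directly, by transferring each nonnegative test function $f$ supported on $\BRcc(Y)$ to the function $g[G,u,v]=f[G,u,v]\cdot\chi[G,u]/a$ and invoking unimodularity of $\mu$ on $X$; this works for an arbitrary (possibly disconnected, possibly lawless elsewhere) ambient graph $X$ without ever mentioning stabilizers. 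You instead reduce everything to the scalar identities $|G_ab|\,\nu[Y,a]=|G_ba|\,\nu[Y,b]$ and use the converse direction of Proposition~\ref{criterion_for_unimodularity} applied to $\nu$ on the connected graph $Y$, which is a perfectly legitimate and arguably more conceptual use of the criterion. What your route buys is brevity and a clear picture of why only ratios along edges of $Y$ matter; what the paper's route buys is that it never needs the stabilizer bookkeeping you correctly identify as the main obstacle.

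One caveat: you cannot literally ``apply Proposition~\ref{criterion_for_unimodularity} to $\mu$ on $X$,'' since that proposition is stated for a measure sustained by a \emph{connected} graph, and here $X$ is typically disconnected. The fix is exactly the alternative you mention at the end: for adjacent $a,b\in V(Y)$ take $f$ to be the characteristic function of the singleton $\{[Y,a,b]\}$ (which vanishes off $\BRcc(Y)\subseteq\BRcc(X)$) and plug it into the definition of unimodularity of $\mu$; since any rooted component of $X$ contributing a nonzero term must equal $[Y,a]$ (resp.\ $[Y,b]$), Lemma~\ref{birooted_equiv_to_stabilizer} gives $|\Aut(Y)_ab|\,\mu[Y,a]=|\Aut(Y)_ba|\,\mu[Y,b]$ directly, with stabilizers already taken in $\Aut(Y)$. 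This also makes your comparison of $\Aut(X)$- and $\Aut(Y)$-stabilizer orbits unnecessary (though your claim that they coincide is true, because an automorphism of $X$ fixing $a$ preserves the component of $a$). With that adjustment your argument is complete.
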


\begin{proof}
Let $X$ be a judicial graph, and let $Y$ be a connected component of $X$. Suppose that $\mu \in \U$ is sustained by $X$. Clearly,
\[
a = \sum_{[Y,x] \in \Rcc(Y)} \mu[Y,x]
\]
is nonzero because $\mu[Y,y] \neq 0$ for some $y \in V(Y)$. Denote by $\chi$ the characteristic function of $\Rcc(Y)$. Note that $\nu = (\mu \cdot \chi)/a$ is a measure sustained by $Y$. It remains to show that $\nu$ is unimodular. Let $f$ be a nonnegative function on $\dGr$ with $f(\dGr \setminus \BRcc(Y)) = \{0\}$. Define the function $g : \dGr \to \RR$ by
\[
g[G,u,v] = \frac{f[G,u,v] \cdot \chi[G,u]}{a}
\]
for all $[G,u,v] \in \dGr$. Clearly, $g(\dGr \setminus \BRcc(X)) = \{0\}$ because $\BRcc(Y) \subseteq \BRcc(X)$. Then
\begin{align*}
\sum_{[X_x,x] \in \Rcc(X)} \sum_{y \in N(x)} g[X_x,x,y] \cdot \mu[X_x,x] &= \sum_{[Y,x] \in \Rcc(Y)} \sum_{y \in N(x)} g[Y,x,y] \cdot \mu[Y,x]\\
&= \sum_{[Y,x] \in \Rcc(Y)} \sum_{y \in N(x)} \left(\frac{f[Y,x,y] \cdot \chi[Y,x]}{a}\right) \mu[Y,x]\\
&= \sum_{[Y,x] \in \Rcc(Y)} \sum_{y \in N(x)} f[Y,x,y] \cdot \nu[Y,x]
\end{align*}
and
\begin{align*}
\sum_{[X_x,x] \in \Rcc(X)} \sum_{y \in N(x)} g[X_x,y,x] \cdot \mu[X_x,x] &= \sum_{[Y,x] \in \Rcc(Y)} \sum_{y \in N(x)} g[Y,y,x] \cdot \mu[Y,x]\\
&= \sum_{[Y,x] \in \Rcc(Y)} \sum_{y \in N(x)} \left(\frac{f[Y,y,x] \cdot \chi[Y,y]}{a}\right) \mu[Y,x]\\
&= \sum_{[Y,x] \in \Rcc(Y)} \sum_{y \in N(x)} \left(\frac{f[Y,y,x] \cdot \chi[Y,x]}{a}\right) \mu[Y,x]\\
&= \sum_{[Y,x] \in \Rcc(Y)} \sum_{y \in N(x)} f[Y,y,x] \cdot \nu[Y,x]
\end{align*}
are equal because $\mu$ is unimodular. Hence $\nu$ is unimodular too.
\end{proof}
\cleardoublepage

%
%
\section{Weak limits are invariant under negligence}

Although this section bears a silly name, the concept is meaningful. We show that the weak limit of a sequence of finite graphs remains the same if a so-called negligible subgraph of each term is deleted. Using this result, we deduce several simple consequences.

\begin{figure}[ht]
\begin{center}
\includegraphics[scale=1.2]{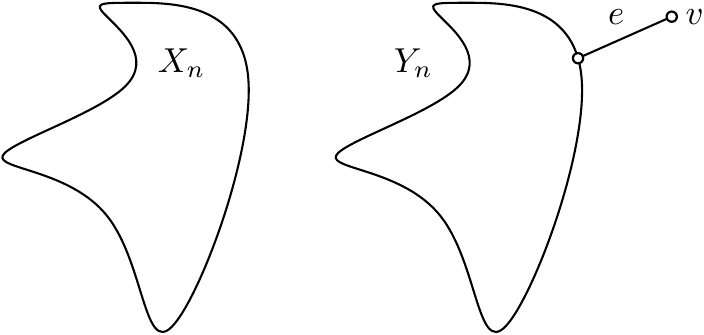}
\caption[The graph $Y_n$ is $X_n$ with an additional vertex $v$ and an edge $e$]{The graph $Y_n$ is $X_n$ with an additional vertex $v$ and an edge $e$.}
\label{graph_and_graph_plus_edge}
\end{center}
\end{figure}

Let us begin with an example. Consider the graphs in Figure \ref{graph_and_graph_plus_edge}, which are terms in the sequences $(X_n)$ and $(Y_n)$ of finite connected graphs where $Y_n$ is $X_n$ with an additional edge $e$ and vertex $v$ for all positive integers $n$. In general, the orbits of these graphs are vastly different. This means that it is not wise to compare the laws of $X_n$ and $Y_n$ by definition. Instead, let $f \in \C(\Gr)$, and recall that
\[
\int f ~d\Psi(X_n) = \sum_{x \in V(X_n)} \frac{f[X_n,x]}{|V(X_n)|}
\]
where we do not distinguish orbits, and simply sum over the vertices of $X_n$. Furthermore, the distance between $[X_n,x]$ and $[Y_n,x]$ is small if $x \in V(X_n)$ is far from $v$, that is, if $d_{Y_n}(x,v) > r$ for some nonnegative integer $r$. Using the continuity of $f$, it is possible to deduce that the distance between $\int f ~d\Psi(X_n)$ and $\int f ~d\Psi(Y_n)$ tends to zero as $n$ grows. In this case, the sequences $(\Psi(X_n))$ and $(\Psi(Y_n))$ have the same weak limit if it exists.

%
%
\subsection{Example: the graph obtained by attaching trees to cycles}

\begin{figure}[ht]
\begin{center}
\includegraphics[scale=1.2]{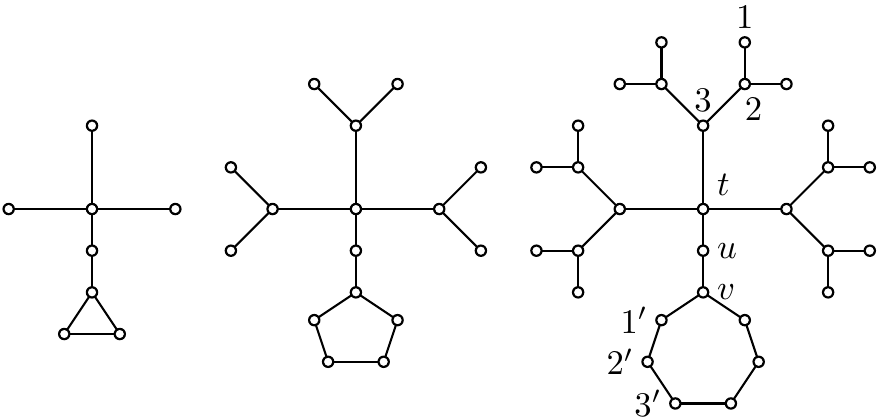}
\caption[The terms $X_1$, $X_2$, and $X_3$ of the sequence $(X_n)$]{The terms $X_1$, $X_2$, and $X_3$ of the sequence $(X_n)$.}
\label{tree_plus_cycle}
\end{center}
\end{figure}

To better understand this situation, consider the following specific example. Recall that $T$ is the $3$-regular tree, and $t$ is a vertex of $T$. Denote by $(X_n)$ the sequence illustrated in Figure \ref{tree_plus_cycle} and defined as follows: $X_n$ is obtained by ``attaching'' the root of $T_n = B_T(t,n)$ to a vertex of the cycle $\ZZ_{2n+1}$ using a path on three vertices for all positive integers $n$. The set of rooted connected components is the union
\[
\{[X_n,t],[X_n,u],[X_n,v]\} \cup \{[X_n,i] ~:~ 1 \leq i \leq n\} \cup \{[X_n,i'] ~:~ 1 \leq i \leq n\},
\]
and
\[
|V(X_n)| = |V(T_n)| + |V(\ZZ_{2n+1})| + 1 = (3 \cdot 2^n - 2) + (2n + 1) + 1 = 3 \cdot 2^n + 2n
\]
for all positive integers $n$. Observe that
\[
\Psi(X_n)[X_n,i] = \frac{|\Aut(X_n)i|}{|V(X_n)|} = \frac{3 \cdot 2^{n-i}}{3 \cdot 2^n + 2n},
\]
\[
\Psi(X_n)[X_n,i'] = \frac{|\Aut(X_n)i'|}{|V(X_n)|} = \frac{2}{3 \cdot 2^n + 2n},
\]
and
\[
\Psi(X_n)[X_n,t] = \Psi(X_n)[X_n,u] = \Psi(X_n)[X_n,v] = \frac{1}{3 \cdot 2^n + 2n}
\]
for all positive integers $n$ and $i \in \{1,2,\ldots,n\}$. Individually, the weak limits of $(\Psi(T_n))$ and $(\Psi(\ZZ_{2n+1}))$ are $\mu_S$ and the Dirac measure $\delta_\ZZ$, respectively. It is reasonable to assume that the weak limit of $(\Psi(X_n))$ is sustained by the graphs $S$ and $\ZZ$, and related to $\mu_S$ and $\delta_\ZZ$. In fact, it is a convex combination of these two measures. However, the goal is to determine the appropriate coefficients of this combination.

Given a function $f \in \C(\Gr)$,
\[
\int f ~d\Psi(X_n) = \frac{f[X_n,t] + f[X_n,u] + f[X_n,v]}{3 \cdot 2^n + 2n} + \sum_{i=1}^n \frac{(3 \cdot 2^{n-i})f[X_n,i] + 2f[X_n,i']}{3 \cdot 2^n + 2n}
\]
for all positive integers $n$. Note that
\[
\lim_{n \to \infty} \int f ~d\Psi(X_n) = a \lim_{n \to \infty} \left(\sum_{i=1}^n \frac{f[X_n,i]}{2^i}\right) + b \lim_{n \to \infty} \left(\sum_{i=1}^n \frac{2f[X_n,i']}{2n}\right)
\]
where
\[
a = \lim_{n \to \infty} \left(\frac{3 \cdot 2^n}{3 \cdot 2^n + 2n}\right) = 1
\]
and
\[
b = \lim_{n \to \infty} \left(\frac{2n}{3 \cdot 2^n + 2n}\right) = 0.
\]
Since $X_n$ is similar to $T_n$ and $P_n$ in an appropriate region outside of a neighbourhood of $t$, let us assume that $[X_n,i]$ and $[T_n,i]$ are close, and $[X_n,i']$ and $[P_n,i']$ are close in the limit. Furthermore,
\[
\int f ~d\mu_S = \lim_{n \to \infty} \int f ~d\Psi(T_n) = \lim_{n \to \infty} \left(\sum_{i=1}^n \frac{f[T_n,i]}{2^i}\right)
\]
and
\[
\int f ~d\delta_\ZZ = \lim_{n \to \infty} \int f ~d\Psi(P_n) = \lim_{n \to \infty} \left(\sum_{i=1}^n \frac{2f[P_n,i']}{2n}\right),
\]
which means
\[
\lim_{n \to \infty} \int f ~d\Psi(X_n) = a \int f ~d\mu_S + b \int f ~d\delta_\ZZ
\]
where $a = 1$ and $b = 0$, as previously calculated. Thus the weak limit of $(\Psi(X_n))$ is simply $\mu_S$. Loosely speaking, this happens because the cycles do not grow as quickly as the trees, and so the measure of the cycles becomes negligible. However, this is no longer true if the cycles grow exponentially to keep up with the size of the trees. The weak limit of such a sequence is strictly sustained by both $S$ and $\ZZ$.

%
%
\subsection{Removal of a negligible sequence preserves the weak limit}

\begin{figure}[ht]
\begin{center}
\includegraphics[scale=1.2]{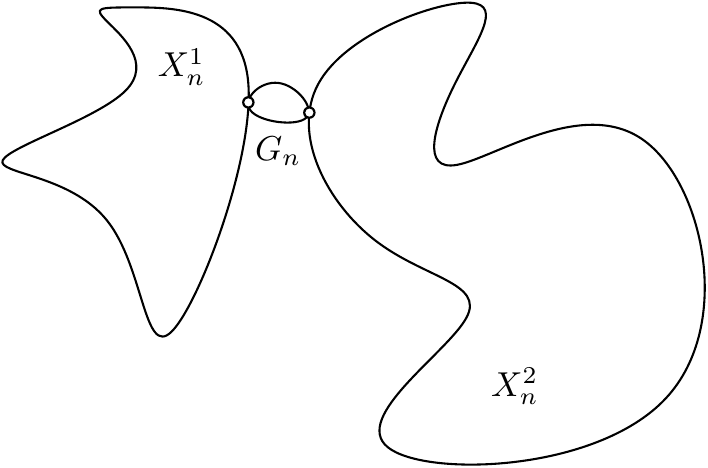}
\caption[The graph $X_n$ with $X_n \setminus G_n = X_n^1 + X_n^2$]{The graph $X_n$. The connected components of $X_n \setminus G_n$ are $X_n^1$ and $X_n^2$.}
\label{graph_connected_to_graph_by_small_graph}
\end{center}
\end{figure}

As shown in Figure \ref{graph_connected_to_graph_by_small_graph}, it is possible to consider more complicated examples of this phenomenon. With an additional stipulation, similar reasoning may be used to prove that the weak limits of $(\Psi(X_n))$ and $(\Psi(X_n^1 + X_n^2))$ are equal if they exist. The requirement is that the ratio $|V(G_n)|/|V(X_n)|$ must vanish as $n$ tends to infinity, which intuitively means the ``link'' $G_n$ eventually disappears, and $X_n$ simply becomes $X_n^1 + X_n^2$. In fact, an even more general statement may be made.

\begin{defn}
Let $(X_n)$ be a sequence of finite graphs. A sequence of graphs $(G_n)$ is \emph{negligible in $(X_n)$}\index{negligible} if $G_n$ is a subgraph of $X_n$ for all positive integers $n$, and
\[
\lim_{n \to \infty} \frac{|V(G_n)|}{|V(X_n)|} = 0.
\]
\end{defn}

%
%
\begin{framed}
\begin{theo}\label{weak_limits_are_invariant_under_negligence}
Suppose that $(X_n)$ and $(G_n)$ are sequences of finite graphs such that $(G_n)$ is negligible in $(X_n)$. Then
\[
\lim_{n \to \infty} \left|\int f ~d\Psi(X_n) - \int f ~d\Psi(X_n \setminus G_n)\right| = 0
\]
for all $f \in \C(\Gr)$. Furthermore, $(\Psi(X_n))$ converges weakly to $\mu$ if and only if $(\Psi(X_n \setminus G_n))$ does too.
\end{theo}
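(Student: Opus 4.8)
The plan is to reduce everything to the averaging formula of Proposition~\ref{integral_wrt_law}. Write $N = |V(X_n)|$, $M = |V(G_n)|$ and $Z_n = X_n \setminus G_n$, so that $V(X_n)$ is the disjoint union of $V(G_n)$ and $V(Z_n)$; negligibility gives $M/N \to 0$, hence $M < N$ for all large $n$ and $\Psi(Z_n)$ is defined. Fix $f \in \C(\Gr)$ and a constant $c$ with $|f| \le c$ on $\Gr$. Proposition~\ref{integral_wrt_law} lets me write
\[
\int f\,d\Psi(X_n) - \int f\,d\Psi(Z_n) = \frac{1}{N}\sum_{x\in V(G_n)} f[(X_n)_x,x] + \Big(\tfrac1N - \tfrac1{N-M}\Big)\sum_{x\in V(Z_n)} f[(X_n)_x,x] + \frac{1}{N-M}\sum_{x\in V(Z_n)}\big(f[(X_n)_x,x] - f[(Z_n)_x,x]\big).
\]
The first term has absolute value at most $cM/N$, and the second at most $cM/N$ as well (since $|\tfrac1N - \tfrac1{N-M}| = \tfrac{M}{N(N-M)}$ and the sum runs over $N-M$ terms), so both tend to $0$. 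The whole difficulty is thus in the third term, which records how the rooted component seen from a vertex of $Z_n$ changes when $G_n$ is removed.

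To handle the third term I would use that deleting vertices only increases graph distances while leaving balls disjoint from the deleted set intact: if $x \in V(Z_n)$ satisfies $d_{X_n}(x,V(G_n)) > r$, then every geodesic in $X_n$ from $x$ to a vertex at distance $\le r$ avoids $V(G_n)$, so $B_{X_n}(x,r) = B_{Z_n}(x,r)$ as rooted graphs, whence $\rho([(X_n)_x,x],[(Z_n)_x,x]) \le 2^{-r}$. Now fix $\e > 0$. Since $\Gr$ is compact, $f$ is uniformly continuous, so there is an $r$ with $|f(p)-f(q)| < \e$ whenever $\rho(p,q) \le 2^{-r}$. Split $V(Z_n)$ into the ``far'' set $F_n = \{x \in V(Z_n) : d_{X_n}(x,V(G_n)) > r\}$ and the ``near'' set $B_n = V(Z_n)\setminus F_n$. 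For $x \in F_n$ the summand is $< \e$ by the observation above, and for $x \in B_n$ it is $\le 2c$ trivially. Every vertex of $B_n$ lies in a ball $B_{X_n}(g,r)$ for some $g \in V(G_n)$, and such a ball has at most $C_r$ vertices, where $C_r$ depends only on $\Delta$ and $r$ (all degrees being $\le \Delta$); hence $|B_n| \le C_r M$. Therefore the third term is at most $\e + 2cC_r M/(N-M)$ in absolute value, and since $M/(N-M) \to 0$ we get $\limsup_n$ of the whole difference $\le \e$. As $\e$ was arbitrary, the difference tends to $0$, which is the first assertion.

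The second assertion is then immediate from the triangle inequality: for any measure $\mu$ on $\Gr$ and any $f \in \C(\Gr)$,
\[
\Big|\int f\,d\Psi(Z_n) - \int f\,d\mu\Big| \le \Big|\int f\,d\Psi(X_n) - \int f\,d\mu\Big| + \Big|\int f\,d\Psi(X_n) - \int f\,d\Psi(Z_n)\Big|,
\]
and the last term tends to $0$ by the first assertion (and symmetrically with the roles of $X_n$ and $Z_n$ swapped); so $\int f\,d\Psi(X_n) \to \int f\,d\mu$ for every $f$ if and only if $\int f\,d\Psi(Z_n) \to \int f\,d\mu$ for every $f$, i.e. $(\Psi(X_n))$ converges weakly to $\mu$ exactly when $(\Psi(Z_n))$ does. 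I expect the main obstacle to be the ``far vertex'' step and its bookkeeping: one must check that although removing $G_n$ can increase distances, the geodesics realizing distances $\le r$ from a far vertex survive intact, so the radius-$r$ balls genuinely coincide as induced rooted graphs, and that the count $|B_n| \le C_r|V(G_n)|$ is uniform in $n$ — which is precisely where the global degree bound $\Delta$ is used.
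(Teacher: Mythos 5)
Your proposal is correct and follows essentially the same route as the paper's proof: uniform continuity of $f$ on the compact space $\Gr$, the observation that radius-$r$ balls (and hence $\rho$-distances) are unchanged at vertices outside the $r$-neighbourhood of $G_n$ (the paper's Lemma \ref{weak_limits_are_invariant_under_negligence_lemma}), the degree-bound count $|\N_{X_n}(G_n,r)| \leq (\Delta+1)^r|V(G_n)|$ (Proposition \ref{the_nbhd_is_bounded}), and a triangle-inequality argument for the weak-limit equivalence. Your three-term splitting of the normalized integrals is only a bookkeeping variant of the paper's handling of the two denominators $|V(X_n)|$ and $|V(X_n \setminus G_n)|$, so there is nothing substantively different to report.
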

\end{framed}

\begin{figure}[ht]
\begin{center}
\includegraphics[scale=1.2]{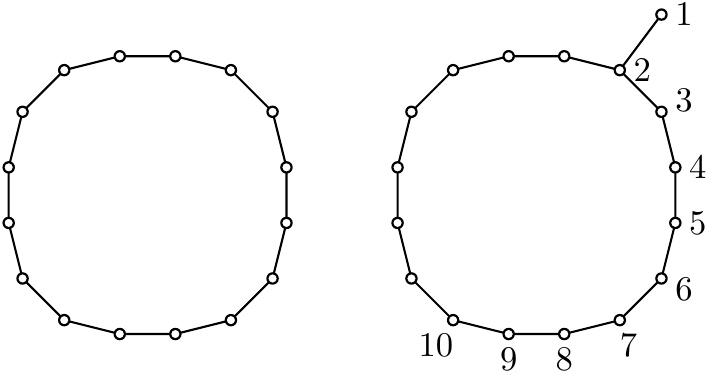}
\caption[A cycle $\ZZ_{16}$, and a copy of it with an additional vertex and edge]{A cycle $\ZZ_{16}$, and a copy of it with an additional vertex and edge.}
\label{cycle_and_cycle_plus_edge}
\end{center}
\end{figure}

Unfortunately, it is not readily possible to use the definition of the law of a finite graph to prove this result. As mentioned before, and as Figure \ref{cycle_and_cycle_plus_edge} demonstrates, two graphs that appear similar may have very different orbits. Indeed, the cycle $\ZZ_{16}$ is vertex-transitive, but the graph on the right has ten distinct orbits, represented by the first ten positive integers. However, it is possible to ignore orbits and focus on the similar portions of the graphs, a procedure which is made precise using neighbourhoods.

\begin{defn}
Let $X$ be a graph, let $G$ be a subgraph of $X$, and let $r$ be a nonnegative integer. Denote by $\N_X(G,r)$\index{NXGr@$\N_X(G,r)$} the \emph{$r$-neighbourhood of $G$ in $X$}\index{rneighbourhood@$r$-neighbourhood}. That is,
\[
\N_X(G,r) = \{x \in V(X) ~:~ d(x,G) \leq r\}
\]
where $d(x,G) = \min\{d(x,y) ~:~ y \in V(G)\}$.
\end{defn}

\begin{prop}\label{the_nbhd_is_bounded}
Recall that $\Delta$ is a positive integer such that $\deg_Y(y) \leq \Delta$ for all $[Y,y] \in \Gr$. Let $G$ be a finite subgraph of a graph $X$. If $r$ is a nonnegative integer, then $|\N_X(G,r)| \leq (\Delta + 1)^r \cdot |V(G)|$.
\end{prop}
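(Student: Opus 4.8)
The plan is to prove this by \emph{induction on the radius $r$}, using the bounded-degree hypothesis at each step.

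First I would dispose of the base case $r = 0$. Since $d(x,G) = 0$ exactly when $x \in V(G)$, we have $\N_X(G,0) = V(G)$, and hence $|\N_X(G,0)| = |V(G)| = (\Delta+1)^0 \cdot |V(G)|$, as required. For the inductive step, assume $|\N_X(G,r)| \le (\Delta+1)^r \cdot |V(G)|$. The key observation is that every vertex of $\N_X(G,r+1)$ either already lies in $\N_X(G,r)$ or is adjacent to some vertex of $\N_X(G,r)$: if $d(x,G) = r+1$, pick $z \in V(G)$ with $d(x,z) = r+1$ together with a shortest $x$--$z$ path, and let $y$ be the second vertex of that path; then $y \in N_X(x)$ and $d(y,z) = r$, so $d(y,G) \le r$, i.e. $y \in \N_X(G,r)$. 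This yields the containment
\[
\N_X(G,r+1) \subseteq \N_X(G,r) \cup \bigcup_{y \in \N_X(G,r)} N_X(y).
\]
Since every vertex of $X$ has degree at most $\Delta$, each set $N_X(y)$ has at most $\Delta$ elements, so
\[
|\N_X(G,r+1)| \le |\N_X(G,r)| + \Delta \cdot |\N_X(G,r)| = (\Delta+1)\,|\N_X(G,r)| \le (\Delta+1)^{r+1} \cdot |V(G)|,
\]
which closes the induction.

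There is no substantial obstacle here; the argument is essentially routine counting, so the "hard part" is merely bookkeeping. The points worth stating carefully are that $G$ is finite (so $|V(G)|$ is a legitimate finite bound), that vertices lying in connected components of $X$ disjoint from $V(G)$ satisfy $d(x,G) = \infty$ and therefore never enter any $\N_X(G,r)$ — which is what makes the inductive containment valid — and that the relevant degree bound is precisely the global constant $\Delta$ fixed at the outset, which applies to $X$ since all rooted connected components under consideration lie in $\Gr$.
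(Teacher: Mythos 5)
Your proof is correct, but it takes a different route from the paper. The paper writes $\N_X(G,r) = \bigcup_{x \in V(G)} B_X(x,r)$, bounds each individual ball by decomposing it into spheres $S_X(x,i) = B_X(x,i)\setminus B_X(x,i-1)$ with $|S_X(x,i)| \leq \Delta(\Delta-1)^{i-1}$, sums the geometric series to get $|B_X(x,r)| \leq (\Delta+1)^r$, and then applies a union bound over $V(G)$; you instead induct directly on $r$, growing the whole neighbourhood one layer at a time via the containment $\N_X(G,r+1) \subseteq \N_X(G,r) \cup \bigcup_{y \in \N_X(G,r)} N_X(y)$ and the estimate $|\N_X(G,r+1)| \leq (\Delta+1)|\N_X(G,r)|$. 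Your inductive step is sound (the shortest-path argument producing the neighbour $y$ with $d(y,G) \leq r$ is exactly what is needed), and your side remarks about the base case, about vertices in components disjoint from $G$, and about the degree bound $\Delta$ applying to $X$ are the right things to pin down. What each approach buys: the paper's per-ball sphere count gives a sharper intermediate estimate on ball sizes (growth governed by $\Delta(\Delta-1)^{i-1}$), which is information reusable elsewhere, but it pays for this with a closed-form expression $1 + \Delta((\Delta-1)^r-1)/(\Delta-2)$ that degenerates when $\Delta = 2$ and still needs a separate auxiliary induction to compare it with $(\Delta+1)^r$; your single induction avoids the geometric-sum bookkeeping entirely and reaches the same final bound $(\Delta+1)^r\cdot|V(G)|$ with less case sensitivity, at the cost of not isolating the ball estimate as a standalone fact.
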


\begin{proof}
Suppose that $r$ is a nonnegative integer and $x \in V(X)$. Let
\[
S_X(x,i) = B_X(x,i) \setminus B_X(x,i - 1)
\]
for all $i \in \{1,2,\ldots,r\}$, and observe that
\[
|B_X(x,r)| = 1 + \sum_{i=1}^r |S_X(x,i)| \leq 1 + \sum_{i=1}^r \Delta(\Delta - 1)^{i-1} = 1 + \frac{\Delta ((\Delta - 1)^r - 1)}{\Delta - 2}
\]
where the right-hand side is at most $(\Delta + 1)^r$ by induction on $r$. Then
\[
|\N_X(G,r)| = \left|\bigcup_{x \in V(G)} B_X(x,r)\right| \leq \sum_{x \in V(G)} |B_X(x,r)| \leq (\Delta + 1)^r \cdot |V(G)|.\qedhere
\]
\end{proof}

\begin{lem}\label{weak_limits_are_invariant_under_negligence_lemma}
Let $X$ be a graph, and let $G$ be a subgraph of $X$. If $r$ is a nonnegative integer, then
\[
\rho([X_x,x],[(X \setminus G)_x,x]) \leq 2^{-r}
\]
for all $x \in V(X) \setminus \N_X(G,r)$.
\end{lem}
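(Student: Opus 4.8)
The plan is to prove the sharper statement that for every $x \in V(X) \setminus \N_X(G,r)$ the rooted balls $[B_X(x,r),x]$ and $[B_{X\setminus G}(x,r),x]$ are \emph{literally equal} — indeed both equal to the same induced subgraph of $X$. Once this is in hand the bound is immediate: by the definition of $\rho$ the integer $r$ then lies in the set $\{s \in \NN : [B_X(x,s),x] = [B_{X\setminus G}(x,s),x]\}$, so $\rho([X_x,x],[(X\setminus G)_x,x]) = 2^{-\sup(\cdot)} \le 2^{-r}$ (or the two rooted graphs coincide and the distance is $0$). So the whole problem reduces to comparing the radius-$r$ neighbourhood of $x$ before and after deleting $G$.

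The key elementary fact I would record is that on a shortest path from $x$ to $y$ inside the component $X_x$, every vertex $z$ satisfies $d_{X_x}(x,z) \le d_{X_x}(x,y)$. Consequently, if $d_{X_x}(x,y) \le r$, then a geodesic from $x$ to $y$ consists entirely of vertices within distance $r$ of $x$; since $x \notin \N_X(G,r)$ says precisely that no vertex of $G$ lies within distance $r$ of $x$, such a geodesic avoids $V(G)$ altogether. In particular $V(B_X(x,r)) \cap V(G) = \emptyset$, and, more usefully, each such geodesic is still a walk in $X \setminus G$.

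From this I would deduce that the two balls have the same vertex set. On one hand $(X \setminus G)_x$ is a subgraph of $X_x$, so deleting $G$ cannot decrease distances: $d_{X_x}(x,y) \le d_{(X\setminus G)_x}(x,y)$ always. On the other hand, if $d_{X_x}(x,y) \le r$, a length-$\le r$ geodesic survives in $X\setminus G$ by the previous paragraph, which forces $y \in (X\setminus G)_x$ and $d_{(X\setminus G)_x}(x,y) \le d_{X_x}(x,y)$. Hence the two distances coincide whenever either is at most $r$, and $V(B_X(x,r)) = V(B_{X\setminus G}(x,r)) =: V$. Finally, since $V$ contains no vertex of $G$, the edges of $X$ with both ends in $V$ are exactly the edges of $X \setminus G$ with both ends in $V$; as each ball is by definition the induced subgraph on $V$ (of $X$, respectively of $X \setminus G$), the two balls are one and the same rooted graph, which completes the argument.

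I do not expect a genuine obstacle: the content is just the geodesic-stays-close observation together with careful bookkeeping of the three metrics $d_X$, $d_{X_x}$ and $d_{(X\setminus G)_x}$, ensuring that removing $G$ neither disconnects $x$ from a nearby vertex nor lengthens a short path. One degenerate case deserves a remark — if $X_x$ contains no vertex of $G$ at all (for instance $G$ lies in other components, so $d(x,G) = \infty > r$), then $X_x = (X\setminus G)_x$ outright and the bound holds trivially; this case is in fact subsumed by the argument above.
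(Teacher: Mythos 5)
Your proposal is correct and follows essentially the same route as the paper: show that since $x \notin \N_X(G,r)$ the ball $B_X(x,r)$ avoids $V(G)$, that short paths from $x$ survive the deletion of $G$ so the two radius-$r$ balls coincide as rooted graphs, and then conclude $\rho \leq 2^{-r}$ from the definition of the ultrametric. Your extra care with the induced-edge identification and the distinction between $d_{X_x}$ and $d_{(X\setminus G)_x}$ only makes explicit what the paper's proof leaves implicit.
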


\begin{proof}
Let $r$ be a nonnegative integer, let $x \in V(X) \setminus \N_X(G,r)$, and let $Y = X \setminus G$. Suppose that $B_X(x,r) \cap V(G)$ is nonempty. That is, there is a $z \in B_X(x,r) \cap V(G)$. Since $x \notin \N_X(G,r)$, it follows that $r < d(x,G) \leq d(x,z) \leq r$, which is a contradiction; see Figure \ref{ball_does_not_intersect_subgraph}. Hence $B_X(x,r) \subseteq V(Y)$. Certainly, $B_Y(x,r) \subseteq B_X(x,r)$. If $y \in B_X(x,r)$, there is a path from $x$ to $y$ in $X$ of length at most $r$. Clearly, this path must lie in $B_X(x,r)$, which means it lies in $Y$, and so $y \in B_Y(x,r)$. That is, $B_X(x,r) \subseteq B_Y(x,r)$. It follows that $[B_X(x,r),x] = [B_Y(x,r),x]$ for all $x \in V(X) \setminus \N_X(G,r)$.
\end{proof}

\begin{figure}[ht]
\begin{center}
\includegraphics[scale=1.2]{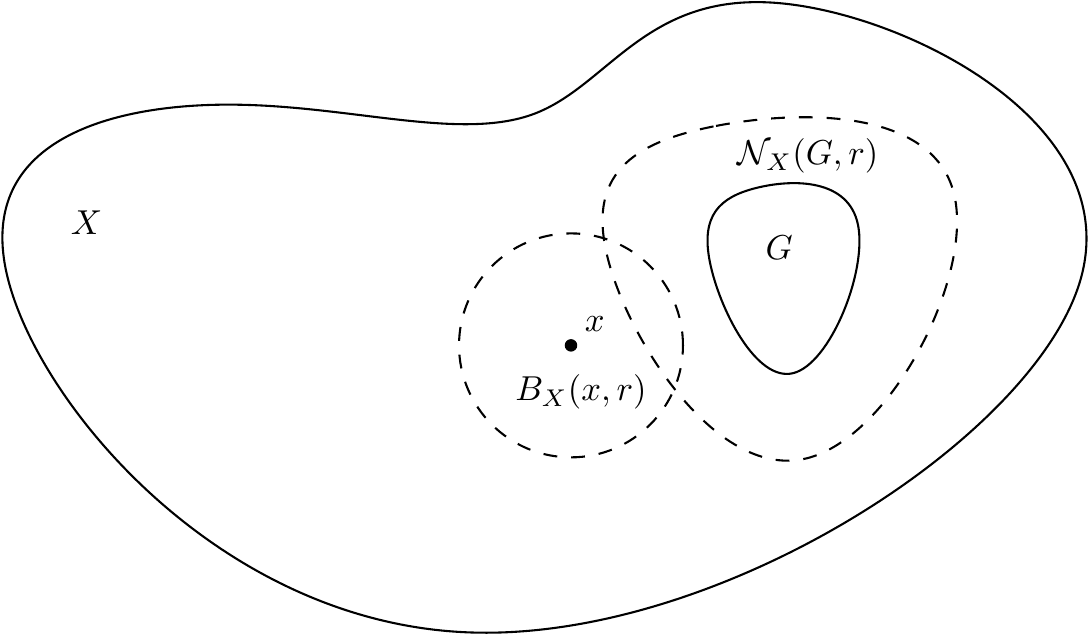}
\caption[If $x$ is not in the $r$-neighbourhood $\N_X(G,r)$, then the ball $B_X(x,r)$ does not intersect the subgraph $G$ of $X$]{If $x$ is not in the $r$-neighbourhood $\N_X(G,r)$, then the ball $B_X(x,r)$ does not intersect the subgraph $G$ of $X$.}
\label{ball_does_not_intersect_subgraph}
\end{center}
\end{figure}

\begin{proof}[Proof of Theorem \ref{weak_limits_are_invariant_under_negligence}]
For convenience, let $Y_n = X_n \setminus G_n$ for all positive integers $n$. Let $\e$ be a positive real number. Since $f \in \C(\Gr)$ where $\Gr$ is compact, it has an upper bound $L$, and it is uniformly continuous:
\[
\exists \delta > 0 ~~ \forall [G,o],[H,p] \in \Gr ~~ \rho([G,o],[H,p]) < \delta ~~\Rightarrow~~ |f[G,o] - f[H,p]| < \e. \tag{$\star$}
\]
Furthermore, $2^{-r} < \delta$ for some positive integer $r$. Let $n$ be a positive integer, and let $\N_n = \N_{X_n}(G_n,r)$ for convenience. By ($\star$) and Lemma \ref{weak_limits_are_invariant_under_negligence_lemma},
\[
|f[(X_n)_x,x] - f[(Y_n)_x,x]| < \e
\]
for all $x \in V(X_n) \setminus \N_n$. Since $V(X_n) = \N_n \cup \N_n^C$ and
\[
V(Y_n) = (V(Y_n) \cap \N_n) \cup (V(Y_n) \cap \N_n^C) = (V(Y_n) \cap \N_n) \cup \N_n^C,
\]
it follows that
\begin{align*}
  &~\left|\sum_{x \in V(X_n)} f[(X_n)_x,x] - \sum_{x \in V(Y_n)} f[(Y_n)_x,x]\right|\\
= &~\left|\sum_{x \in \N_n^C} (f[(X_n)_x,x] - f[(Y_n)_x,x]) + \sum_{x \in \N_n} f[(X_n)_x,x] - \sum_{x \in V(Y_n) \cap \N_n} f[(Y_n)_x,x]\right|\\
\leq &~\sum_{x \in \N_n^C} |f[(X_n)_x,x] - f[(Y_n)_x,x]| + \sum_{x \in \N_n} |f[(X_n)_x,x]| + \sum_{x \in V(Y_n) \cap \N_n} |f[(Y_n)_x,x]|\\
\leq &~\sum_{x \in \N_n^C} |f[(X_n)_x,x] - f[(Y_n)_x,x]| + L \cdot |\N_n| + L \cdot |V(Y_n) \cap \N_n|\\
< &~\e \cdot |V(X_n) \setminus \N_n| + 2L \cdot |\N_n|.
\end{align*}

Then
\begin{align*}
  &~\left|\int f~d\Psi(X_n) - \int f~d\Psi(Y_n)\right|\\
= &~\left|\sum \frac{f[(X_n)_x,x]}{|V(X_n)|} - \sum \frac{f[(Y_n)_x,x]}{|V(Y_n)|}\right|\\
= &~\left|\frac{\sum f[(X_n)_x,x] - \sum f[(Y_n)_x,x]}{|V(X_n)|} + \frac{\sum f[(Y_n)_x,x] \cdot (|V(Y_n)| - |V(X_n)|)}{|V(X_n)| \cdot |V(Y_n)|}\right|\\
\leq &~\frac{|\sum f[(X_n)_x,x] - \sum f[(Y_n)_x,x]|}{|V(X_n)|} + \frac{\sum |f[(Y_n)_x,x]| \cdot ||V(Y_n)| - |V(X_n)||}{|V(X_n)| \cdot |V(Y_n)|}\\
\leq &~\frac{|\sum f[(X_n)_x,x] - \sum f[(Y_n)_x,x]|}{|V(X_n)|} + \frac{L \cdot ||V(Y_n)| - |V(X_n)||}{|V(X_n)|}\\
< &~\frac{\e \cdot |V(X_n) \setminus \N_n|}{|V(X_n)|} + \frac{2L \cdot |\N_n|}{|V(X_n)|} + \frac{L \cdot ||V(Y_n)| - |V(X_n)||}{|V(X_n)|}\\
\leq &~\e + \frac{2L \cdot |\N_n|}{|V(X_n)|} + \frac{L \cdot ||V(Y_n)| - |V(X_n)||}{|V(X_n)|}
\end{align*}
where
\[
\sum \frac{f[(X_n)_x,x]}{|V(X_n)|} = \sum_{x \in V(X_n)} \frac{f[(X_n)_x,x]}{|V(X_n)|}
\]
and
\[
\sum \frac{f[(Y_n)_x,x]}{|V(Y_n)|} = \sum_{x \in V(Y_n)} \frac{f[(Y_n)_x,x]}{|V(Y_n)|}
\]
to simplify notation. It remains to prove that
\[
\lim_{n \to \infty} \frac{|\N_n|}{|V(X_n)|} = 0
\]
and
\[
\lim_{n \to \infty} \frac{||V(Y_n)| - |V(X_n)||}{|V(X_n)|} = 0.
\]
By Proposition \ref{the_nbhd_is_bounded}, $|\N_n| \leq (\Delta + 1)^r \cdot |V(G_n)|$, and so
\[
\lim_{n \to \infty} \frac{|\N_n|}{|V(X_n)|} \leq (\Delta + 1)^r \cdot \lim_{n \to \infty} \frac{|V(G_n)|}{|V(X_n)|} = 0.
\]
On the other hand,
\[
V(Y_n) = V(X_n \setminus G_n) = V(X_n) \setminus V(G_n)
\]
tells us that $||V(Y_n)| - |V(X_n)|| = |V(G_n)|$ for all positive integers $n$, which means
\[
\lim_{n \to \infty} \frac{||V(Y_n)| - |V(X_n)||}{|V(X_n)|} = \lim_{n \to \infty} \frac{|V(G_n)|}{|V(X_n)|} = 0.
\]
Having shown that these limits are zero, there is an integer $N$ such that
\begin{align*}
\left|\int f~d\Psi(X_n) - \int f~d\Psi(Y_n)\right| &< \e + \frac{2L \cdot |\N_n|}{|V(X_n)|} + \frac{L \cdot ||V(Y_n)| - |V(X_n)||}{|V(X_n)|}\\
                                                   &< \e + 2L \cdot \e + L \cdot \e = (3L + 1) \cdot \e
\end{align*}
for all positive integers $n$ with $n \geq N$. Thus
\[
\lim_{n \to \infty} \left|\int f ~d\Psi(X_n) - \int f ~d\Psi(Y_n)\right| = 0,
\]
as required. In addition, assume that $(\Psi(X_n))$ converges weakly to $\mu$. Then
\[
\lim_{n \to \infty} \int f ~d\Psi(Y_n) = \lim_{n \to \infty} \int f ~d\Psi(X_n) = \int f ~d\mu
\]
for all $f \in \C(\Gr)$, and so $\mu$ is the weak limit of $(\Psi(Y_n))$. The proof of the converse is analogous.
\end{proof}

%
%
\subsection{Applications}

Let us discuss the following elementary example to put Theorem \ref{weak_limits_are_invariant_under_negligence} into perspective. Recall that $\ZZ_n$ is a cycle on $n$ vertices, and $P_{n-1}$ is a path on $(n - 1)$ vertices for all positive integers $n$ with $n \geq 3$. Consider the sequences $(\ZZ_n)_{n=3}^\infty$ and $(P_n)_{n=2}^\infty$. Clearly, $\ZZ_n \setminus \{z_n\} = P_{n-1}$ where $z_n \in V(\ZZ_n)$ for all positive integers $n$ with $n \geq 3$. Furthermore, the sequence $(z_n)_{n=3}^\infty$ is negligible in $(\ZZ_n)_{n=3}^\infty$. Since every cycle is vertex-transitive, we know that $(\Psi(\ZZ_n))_{n=3}^\infty = (\delta_{\ZZ_n})_{n=3}^\infty$, and it is easy to see that the weak limit of this sequence of Dirac measures is $\delta_\ZZ$. Then Theorem \ref{weak_limits_are_invariant_under_negligence} tells us that $(\Psi(P_n))_{n=2}^\infty$ converges weakly to $\delta_\ZZ$ too. Even though this fact is shown in this author's Honours project \cite{artemenko11a}, this proof is much less technical.

Another demonstration of the usefulness of Theorem \ref{weak_limits_are_invariant_under_negligence} is the following corollary. Denote by $\Lambda$ the infinite perfect binary tree, and let $\lambda$ be its first ancestor. Fix any vertex $t$ of $T$. Let $\Lambda_n = B_\Lambda(\lambda,n)$, and let $T_n = B_T(t,n)$ for all positive integers $n$. Intuitively, we expect that the weak limit of $(\Psi(\Lambda_n))$ is equal to that of $(\Psi(T_n))$ because $\Lambda$ may be thought of as $T$ with only two branches extruding from $t$.

\begin{cor}\label{branches_equiv_to_full_tree}
The weak limits of $(\Psi(T_n))$ and $(\Psi(\Lambda_n))$ coincide.
\end{cor}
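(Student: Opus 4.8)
The plan is to apply Theorem~\ref{weak_limits_are_invariant_under_negligence} with $X_n = T_n$ and with $G_n$ the one-vertex subgraph $\{t\}$. The vertex $t$ is precisely the ``center'' that distinguishes the $3$-regular tree from the binary tree, and deleting it from the radius-$n$ ball breaks $T_n$ into three identical pieces, each of which turns out to be a copy of $\Lambda_{n-1}$. Since $|V(\{t\})| / |V(T_n)| = 1/(3\cdot 2^n - 2) \to 0$, the sequence $(\{t\})$ is negligible in $(T_n)$, so the theorem gives $\lim_n \int f\,d\Psi(T_n) = \lim_n \int f\,d\Psi(T_n \setminus \{t\})$ for every $f \in \C(\Gr)$, and the two sequences of laws converge weakly, or fail to, together.

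The steps, in order. First, identify $T_n \setminus \{t\}$: let $a$ be a neighbour of $t$ in $T$, and let $T^a$ be the connected component of $a$ in $T \setminus \{t\}$; every vertex of $T^a$ other than $a$ has degree $3$ in $T^a$, while $a$ has degree $2$, so $T^a$ is the infinite perfect binary tree with $a$ as its first ancestor, and $d_T(t,y) = 1 + d_{T^a}(a,y)$ for all $y \in V(T^a)$. Hence the component of $a$ in $T_n \setminus \{t\} = B_T(t,n) \setminus \{t\}$ is exactly $B_{T^a}(a, n-1) \cong \Lambda_{n-1}$; doing this for each of the three neighbours of $t$ shows $T_n \setminus \{t\} = 3\Lambda_{n-1}$. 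Second, compute $\Psi(3\Lambda_{n-1})$: the connected component of any vertex is a copy of $\Lambda_{n-1}$, one has $|V(3\Lambda_{n-1})| = 3|V(\Lambda_{n-1})|$ and $|\Aut(3\Lambda_{n-1})x| = 3\,|\Aut(\Lambda_{n-1})x|$ (an automorphism may carry the copy containing $x$ onto any of the three copies and then act by $\Aut(\Lambda_{n-1})$), so by the definition of the law of a finite graph---equivalently, by Corollary~\ref{judiciality_of_disconnected_graphs_corollary}---we get $\Psi(3\Lambda_{n-1}) = \Psi(\Lambda_{n-1})$. Third, combine: by Theorem~\ref{weak_limits_are_invariant_under_negligence}, $(\Psi(T_n))$ converges weakly to $\mu$ if and only if $(\Psi(T_n \setminus \{t\})) = (\Psi(\Lambda_{n-1}))$ does, and a shift of the index by one does not change a limit, so $(\Psi(T_n))$ and $(\Psi(\Lambda_n))$ have the same weak limit. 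Finally, $(\Psi(T_n))$ does converge weakly, to $\mu_S$, by Proposition~\ref{weak_limit_of_trees_barred_trees}, so $(\Psi(\Lambda_n))$ converges to the same measure, which is the assertion.

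There is essentially one substantive point, the combinatorial identification $T_n \setminus \{t\} = 3\Lambda_{n-1}$ in the first step; the rest is bookkeeping about vertex counts, orbit sizes, and reindexing. The point worth emphasizing---and the reason we delete only the single vertex $t$ rather than a whole branch---is that a branch of $T_n$ hanging off $t$ contains a constant fraction (about a third) of $V(T_n)$ and is therefore \emph{not} negligible, whereas $\{t\}$ is; it is precisely after removing $t$, and not a branch, that the remaining graph is already a disjoint union of binary-tree balls.
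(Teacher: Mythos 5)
Your proposal is correct and is essentially the paper's own argument: the paper likewise deletes the single (negligible) centre vertex $t$, observes $T_{n+1} \setminus \{t\} = 3\Lambda_n$, uses that the law of $3\Lambda_n$ equals that of $\Lambda_n$, and concludes via Theorem \ref{weak_limits_are_invariant_under_negligence} together with Proposition \ref{weak_limit_of_trees_barred_trees} that both sequences converge weakly to $\mu_S$. Your version merely spells out the identification of the components and the index shift in more detail.
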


\begin{proof}
Observe that $T_{n+1} \setminus \{t\} = 3\Lambda_n$ for all positive integers $n$. Since
\[
\lim_{n \to \infty} \frac{|V(\{t\})|}{|V(T_n)|} = 0
\]
and the weak limit of $(\Psi(T_n))$ is $\mu_S$, Theorem \ref{weak_limits_are_invariant_under_negligence} tells us that $(\Psi(\Lambda_n)) = (\Psi(3\Lambda_n))$ converges weakly to $\mu_S$ too.
\end{proof}

In addition, Theorem \ref{weak_limits_are_invariant_under_negligence} is easily specialized to better appeal to the examples presented at the beginning of this section. It is also useful in obtaining the weak limit of a sequence of complicated finite graphs by breaking them down into simpler parts.

\begin{cor}\label{weak_limits_are_invariant_under_negligence_corollary}
Let $(X_n)$ and $(G_n)$ be sequences of finite graphs where $X_n$ is connected, $(G_n)$ is negligible in $(X_n)$, and $Y_n = X_n \setminus G_n$ for all positive integers $n$. Fix the positive integers $\omega$ and $b_k$ for all $k \in \{1,2,\ldots,\omega\}$. Suppose that
\[
Y_n = \sum_{k=1}^\omega b_kX_n^k
\]
where $\{X_n^k ~:~ 1 \leq k \leq \omega\}$ is a set of pairwise distinct connected subgraphs of $X_n$ for all positive integers $n$. If the weak limit of $(\Psi(X_n^k))$ is $\mu^k$ for all $k \in \{1,2,\ldots,\omega\}$, then $(\Psi(X_n))$ converges weakly to
\[
\mu = \sum_{k=1}^\omega \left(\lim_{n \to \infty} \frac{b_k|V(X_n^k)|}{|V(Y_n)|}\right)\mu^k.
\]
\end{cor}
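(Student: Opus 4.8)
The plan is to peel off the negligible piece using Theorem~\ref{weak_limits_are_invariant_under_negligence}, rewrite the law of the leftover disconnected graph via Corollary~\ref{judiciality_of_disconnected_graphs_corollary}, and then take a finite limit term by term. First, since $(G_n)$ is negligible in $(X_n)$, Theorem~\ref{weak_limits_are_invariant_under_negligence} tells us that $(\Psi(X_n))$ converges weakly to a measure if and only if $(\Psi(Y_n))$ does, and in that case the two weak limits agree. Hence it suffices to prove that $(\Psi(Y_n))$ converges weakly to $\mu$.

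Second, fix a positive integer $n$. The graph $Y_n = \sum_{k=1}^\omega b_kX_n^k$ is a finite disjoint union of multiples of pairwise distinct connected graphs, so Corollary~\ref{judiciality_of_disconnected_graphs_corollary} applies verbatim and yields
\[
\Psi(Y_n) = \sum_{k=1}^\omega c_n^k\,\Psi(X_n^k), \qquad c_n^k := \frac{b_k\,|V(X_n^k)|}{|V(Y_n)|}.
\]
Because $|V(Y_n)| = \sum_{k=1}^\omega b_k\,|V(X_n^k)|$, the coefficients satisfy $0 \leq c_n^k \leq 1$ and $\sum_{k=1}^\omega c_n^k = 1$. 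Integrating an arbitrary $f \in \C(\Gr)$ against this identity gives $\int f\,d\Psi(Y_n) = \sum_{k=1}^\omega c_n^k \int f\,d\Psi(X_n^k)$.

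Third, I pass to the limit. Fix $f \in \C(\Gr)$. Because $\Gr$ is compact, $f$ is bounded, so each sequence $\bigl(\int f\,d\Psi(X_n^k)\bigr)_n$ is bounded, and by hypothesis it converges to $\int f\,d\mu^k$. The coefficient limit $c^k := \lim_{n\to\infty} c_n^k$ exists --- this is precisely what is presupposed by the notation defining $\mu$ --- so each product $c_n^k \int f\,d\Psi(X_n^k)$ converges to $c^k \int f\,d\mu^k$, and since the index $k$ ranges over the finite set $\{1,\dots,\omega\}$ the sum converges as well:
\[
\lim_{n\to\infty}\int f\,d\Psi(Y_n) = \sum_{k=1}^\omega c^k \int f\,d\mu^k = \int f\,d\mu.
\]
Here $\sum_{k=1}^\omega c^k = 1$, so $\mu = \sum_{k=1}^\omega c^k\mu^k$ is a probability measure on $\Gr$. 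Thus $(\Psi(Y_n))$ converges weakly to $\mu$, and by the first step so does $(\Psi(X_n))$.

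The only delicate point --- and the thing I would be most careful to flag --- is the existence of the limits $c^k$: without it the right-hand side of the asserted formula for $\mu$ has no meaning, so it must be read as part of the hypothesis (if the ratios $c_n^k$ are allowed merely to oscillate, $(\Psi(X_n))$ can fail to converge even when every $\mu^k$ exists). Everything else is bookkeeping: the first step is a direct invocation of Theorem~\ref{weak_limits_are_invariant_under_negligence}, the second is the algebraic identity of Corollary~\ref{judiciality_of_disconnected_graphs_corollary}, and the third is the elementary fact that a finite sum of products of bounded convergent real sequences converges to the corresponding sum of products of limits.
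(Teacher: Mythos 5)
Your proof is correct and follows essentially the same route as the paper: apply Corollary \ref{judiciality_of_disconnected_graphs_corollary} to write $\Psi(Y_n)$ as a convex combination of the $\Psi(X_n^k)$, integrate $f \in \C(\Gr)$, pass to the limit termwise over the finite index set, and finish with Theorem \ref{weak_limits_are_invariant_under_negligence}. Your explicit remark that the existence of the coefficient limits is part of the hypothesis (implicit in the statement's notation) is a sensible clarification that the paper leaves tacit, but it does not change the argument.
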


\begin{proof}
Let $f \in \C(\Gr)$. If $\mu^k$ is the weak limit of $(\Psi(X_n^k))$, then
\[
\lim_{n \to \infty} \int f ~d\Psi(X_n^k) = \int f ~d\mu^k
\]
for all $k \in \{1,2,\ldots,\omega\}$. By Corollary \ref{judiciality_of_disconnected_graphs_corollary},
\[
\Psi(Y_n) = \sum_{k=1}^\omega \left(\frac{b_k|V(X_n^k)|}{|V(Y_n)|}\right) \Psi(X_n^k),
\]
and so
\[
\int f~d\Psi(Y_n) = \sum_{k=1}^\omega \left(\frac{b_k|V(X_n^k)|}{|V(Y_n)|}\right) \cdot \int f ~d\Psi(X_n^k).
\]
Its limit is
\[
\lim_{n \to \infty} \int f ~d\Psi(Y_n) = \sum_{k=1}^\omega \left(\lim_{n \to \infty} \frac{b_k|V(X_n^k)|}{|V(Y_n)|}\right) \cdot \int f ~d\mu^k = \int f ~d\mu,
\]
and the result follows by Theorem \ref{weak_limits_are_invariant_under_negligence}.
\end{proof}
\cleardoublepage

%
%
\section{Counterexamples and open problems}

Now that the reader has seen many true results, it is necessary to exhibit a few counterexamples to ensure us that not everything in this field of research is true. In fact, this section covers counterexamples for conjectures that, at first glance, could be seen as valid. The remainder deals with questions that this author has yet to answer, including a famous open problem posed by Schramm \cite{schramm08}, Aldous, and Lyons \cite{aldouslyons07}.

%
%
\subsection{Convergence of balls}

Let $X$ be an infinite connected graph, let $x \in V(X)$, and let $X_n = B_X(x,n)$ for all positive integers $n$. The question is whether or not $(\Psi(X_n))$ converges weakly. There are certainly examples for which this statement holds, such as the $3$-regular tree, but this is not true in general. To streamline the process of proving that the following counterexample is indeed valid, consider the following criterion.

\begin{prop}\label{counterexample_criterion}
Let $(X_n)$ be a sequence of graphs. The degree function $\deg : \Gr \to \RR$ is defined by $\deg[X,x] = \deg_X(x)$ for all $[X,x] \in \Gr$. If
\[
\lim_{n \to \infty} \int \deg ~d\Psi(X_n) = \lim_{n \to \infty} \frac{1}{|V(X_n)|} \sum_{x \in V(X_n)} \deg_{X_n}(x)
\]
does not exist, then $(\Psi(X_n))$ does not converge weakly.
\end{prop}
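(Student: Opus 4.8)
The plan is to prove the contrapositive: assuming that $(\Psi(X_n))$ converges weakly to some measure $\mu$ on $\Gr$, I will show that the limit $\lim_{n \to \infty} \int \deg ~d\Psi(X_n)$ exists. The entire argument hinges on the single observation that the degree function $\deg : \Gr \to \RR$ belongs to $\C(\Gr)$, that is, it is bounded and continuous. Once this is established, weak convergence immediately gives $\int \deg ~d\Psi(X_n) \to \int \deg ~d\mu$, a finite real number, so the limit in question exists; contraposing yields the statement.

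First I would verify boundedness. By the standing assumption on $\Gr$, every $[X,x] \in \Gr$ satisfies $\deg_X(x) \leq \Delta$, so $0 \leq \deg[X,x] \leq \Delta$ for all $[X,x] \in \Gr$, and $\deg$ is bounded. Next I would verify continuity, which is really local constancy: the value $\deg[X,x] = \deg_X(x)$ depends only on the isomorphism class $[B_X(x,1),x]$ of the radius-$1$ ball. Concretely, if $\rho([X,x],[Y,y]) \leq 2^{-1}$, then either $[X,x] = [Y,y]$ (and the degrees trivially agree) or $\rho([X,x],[Y,y]) = 2^{-r}$ with $r \geq 1$, whence $[B_X(x,1),x] = [B_Y(y,1),y]$ and so $\deg_X(x) = \deg_Y(y)$. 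Thus $\deg$ is constant on every ball of radius $2^{-1}$ in the ultrametric $\rho$, hence continuous, and therefore $\deg \in \C(\Gr)$.

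It then remains to connect $\int \deg ~d\Psi(X_n)$ with the averaged-degree expression in the statement. Since each $X_n$ is a finite graph (balls in graphs of bounded degree are finite), Proposition \ref{integral_wrt_law} applied to the measurable function $g = \deg$ gives
\[
\int \deg ~d\Psi(X_n) = \sum_{x \in V(X_n)} \frac{\deg[(X_n)_x,x]}{|V(X_n)|} = \frac{1}{|V(X_n)|} \sum_{x \in V(X_n)} \deg_{X_n}(x),
\]
where the last equality uses that the degree of a vertex in $X_n$ equals its degree in the connected component $(X_n)_x$. Combining the two displayed quantities, convergence of $\int \deg ~d\Psi(X_n)$ and of the averaged degree are the same statement, so the contrapositive argument above closes the proof.

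I do not anticipate a genuine obstacle here; the only point requiring care is the continuity verification for $\deg$, i.e.\ confirming that proximity in $\rho$ forces agreement of the radius-$1$ balls and hence of the degrees. Everything else is a direct appeal to the definition of weak convergence together with Proposition \ref{integral_wrt_law}.
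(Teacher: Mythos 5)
Your proposal is correct and is essentially the argument the paper intends: the proposition is stated without proof precisely because it reduces to noting that $\deg$ is bounded by $\Delta$ and locally constant (determined by the radius-$1$ ball), hence lies in $\C(\Gr)$, so weak convergence of $(\Psi(X_n))$ would force $\int \deg ~d\Psi(X_n)$ to converge, with Proposition \ref{integral_wrt_law} identifying that integral with the average degree. Your verification of continuity and the application of the contrapositive are exactly right.
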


The intuition behind a counterexample $X$ is the following procedure: modify the subgraph of $\ZZ$ induced by the positive integers by attaching finite graphs of large degree at specific vertices along the path; this ensures that certain balls will have a ``jump'' in average degree.

It is possible to construct such a graph $X$ recursively. Consider the infinite path $\ZZ \cap [1,\infty)$, which is simply the subgraph of $\ZZ$ induced by the positive integers. Attach a copy of $K_6$, the complete graph on six vertices, at the vertex $k_1 = 1$, and let $l_1 = 2$. In general, attach a copy of $K_6$ at the vertices
\[
k_n,k_n + 1,\ldots,l_n - 1
\]
where
\[
\begin{cases}
\displaystyle{k_n = 15 \cdot \sum_{i=1}^{n-1} (l_i - k_i) - 2}\\
\displaystyle{l_n = \ceil{\frac{7}{6}k_n + \frac{1}{3}}}
\end{cases}
\]
for all positive integers $n \geq 2$. The resulting infinite graph $X$ is shown in Figure \ref{convergence_of_balls}.

\begin{figure}[ht]
\begin{center}
\includegraphics[scale=1.2]{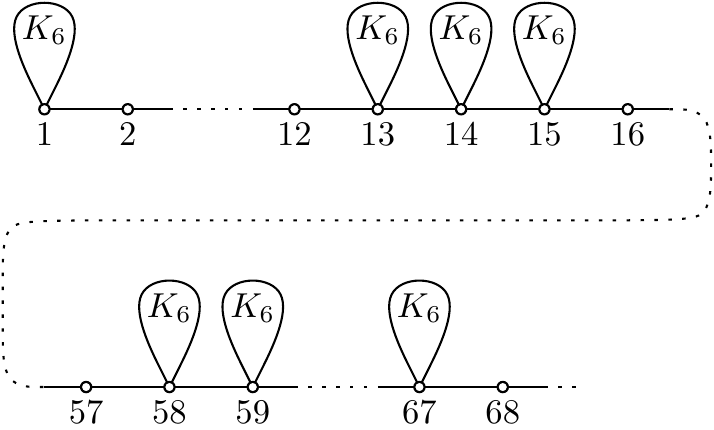}
\caption[The graph $X$ whose sequence of balls $(B_X(1,n))$ does not converge weakly]{The graph $X$ whose sequence of balls $(B_X(1,n))$ does not converge weakly. For clarity, $K_6$ is depicted by an inverted teardrop.}
\label{convergence_of_balls}
\end{center}
\end{figure}

It remains to prove that $(\Psi(X_n))$ does not converge weakly where $X_n = B_X(1,n)$ for all positive integers $n$. Let $n$ be a positive integer with $n \geq 2$. Note that
\[
\sum_{i=1}^{n-1} (l_i - k_i) = \frac{k_n + 2}{15}.
\]
For convenience, let $Y_n = X_{k_n-1}$. Since $Y_n$ is a path on $k_n$ vertices together with $\sum_{i=1}^{n-1} (l_i - k_i)$ copies of $K_6$, it follows that
\[
|V(Y_n)| = k_n + 5 \cdot \sum_{i=1}^{n-1} (l_i - k_i) = k_n + \frac{k_n + 2}{3} = \frac{4k_n + 2}{3}
\]
and
\[
\sum_{x \in V(Y_n)} \deg_{Y_n}(x) = 2(k_n - 1) + 30 \cdot \sum_{i=1}^{n-1} (l_i - k_i) = 2(k_n - 1) + 2(k_n + 2) = 4k_n + 2,
\]
which means the average degree is
\[
\frac{1}{|V(Y_n)|} \sum_{x \in V(Y_n)} \deg_{Y_n}(x) = 3.
\]
On the other hand, let $Z_n = X_{l_n-1}$. In this case, $Z_n$ is a path on $l_n$ vertices with $\sum_{i=1}^n (l_i - k_i)$ copies of $K_6$. Clearly,
\[
\sum_{i=1}^n (l_i - k_i) = (l_n - k_n) + \frac{k_n + 2}{15},
\]
which means
\[
|V(Z_n)| = l_n + 5 \cdot \sum_{i=1}^n (l_i - k_i) = l_n + 5(l_n - k_n) + \frac{k_n + 2}{3} = 6l_n - \frac{14}{3}k_n + \frac{2}{3}
\]
and
\begin{align*}
\sum_{x \in V(Z_n)} \deg_{Z_n}(x) &= 2(l_n - 1) + 30 \cdot \sum_{i=1}^n (l_i - k_i)\\
                                  &= 2(l_n - 1) + 30(l_n - k_n) + 2(k_n + 2)\\
                                  &= 32l_n - 28k_n + 2.
\end{align*}

Suppose that
\[
\frac{1}{|V(Z_n)|} \sum_{x \in V(Z_n)} \deg_{Z_n}(x) = \frac{32l_n - 28k_n + 2}{6l_n - (14/3)k_n + 2/3} < 4.
\]
Then
\[
32l_n - 28k_n + 2 < 24l_n - \frac{56}{3}k_n + \frac{8}{3},
\]
and so
\[
8l_n < \frac{28}{3}k_n + \frac{2}{3} ~~\Rightarrow~~ l_n < \frac{7}{6}k_n + \frac{1}{12} < \ceil{\frac{7}{6}k_n + \frac{1}{3}} = l_n;
\]
a contradiction. Thus
\[
\frac{1}{|V(Z_n)|} \sum_{x \in V(Z_n)} \deg_{Z_n}(x) \geq 4.
\]
It follows that $\lim_{n \to \infty} \int \deg ~d\Psi(Y_n) = 3$ and $\lim_{n \to \infty} \int \deg ~d\Psi(Z_n) \geq 4$, which means $\lim_{n \to \infty} \int \deg ~d\Psi(X_n)$ does not exist. By Proposition \ref{counterexample_criterion}, $(\Psi(X_n))$ does not converge weakly, as required.

%
%
\subsection{Weak limits of adjacent balls}

Let $X$ be an infinite connected graph, and let $x$ and $y$ be vertices of $X$. Let $X_n = B_X(x,n)$ and $Y_n = B_X(y,n)$ for all positive integers $n$. Suppose that $x$ and $y$ are adjacent. Intuitively, the reader may understandably expect a ball around $x$ to be ``similar'' to a ball around $y$ of the same radius. However, this is not the case, and it is possible to construct a graph $X$ such that the weak limits of $(\Psi(X_n))$ and $(\Psi(Y_n))$ are distinct.

\begin{figure}[ht]
\begin{center}
\includegraphics[scale=1.2]{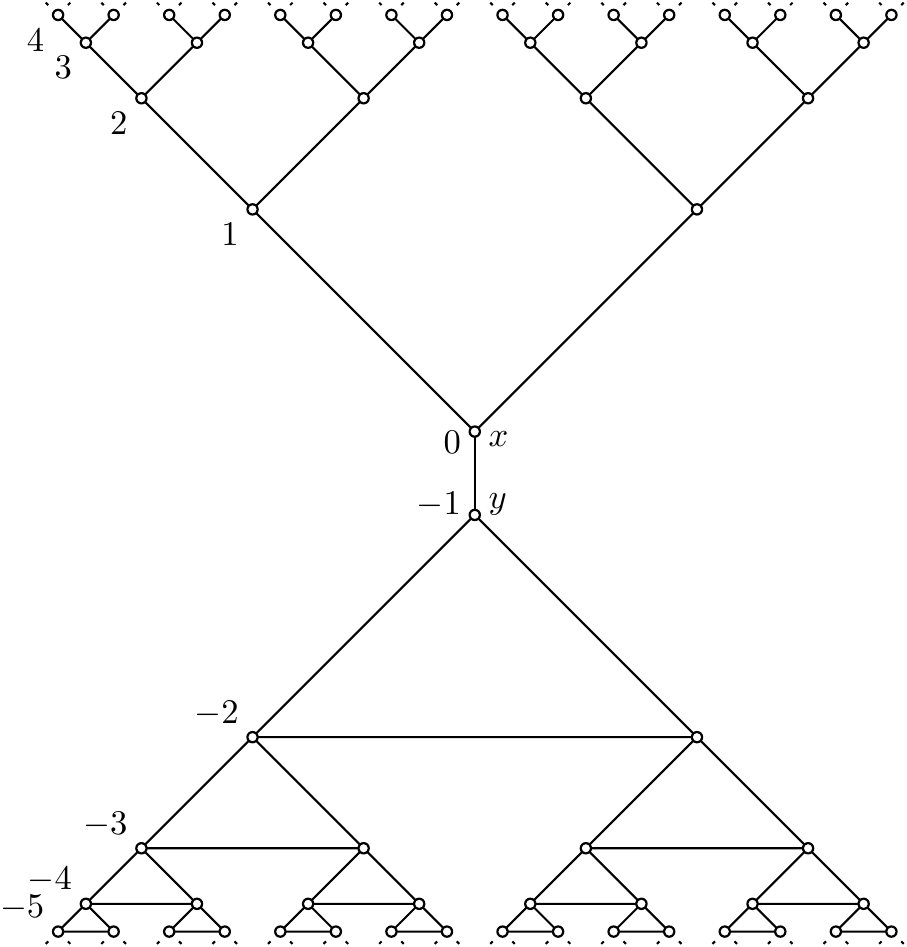}
\caption[The graph $X$, obtained by joining the infinite perfect binary tree and the barred binary tree]{The graph $X$, obtained by joining the infinite perfect binary tree and the barred binary tree. The orbits are labelled using the integers.}
\label{weak_limits_of_adjacent_balls}
\end{center}
\end{figure}

\begin{figure}[ht]
\begin{center}
\includegraphics[scale=1.2]{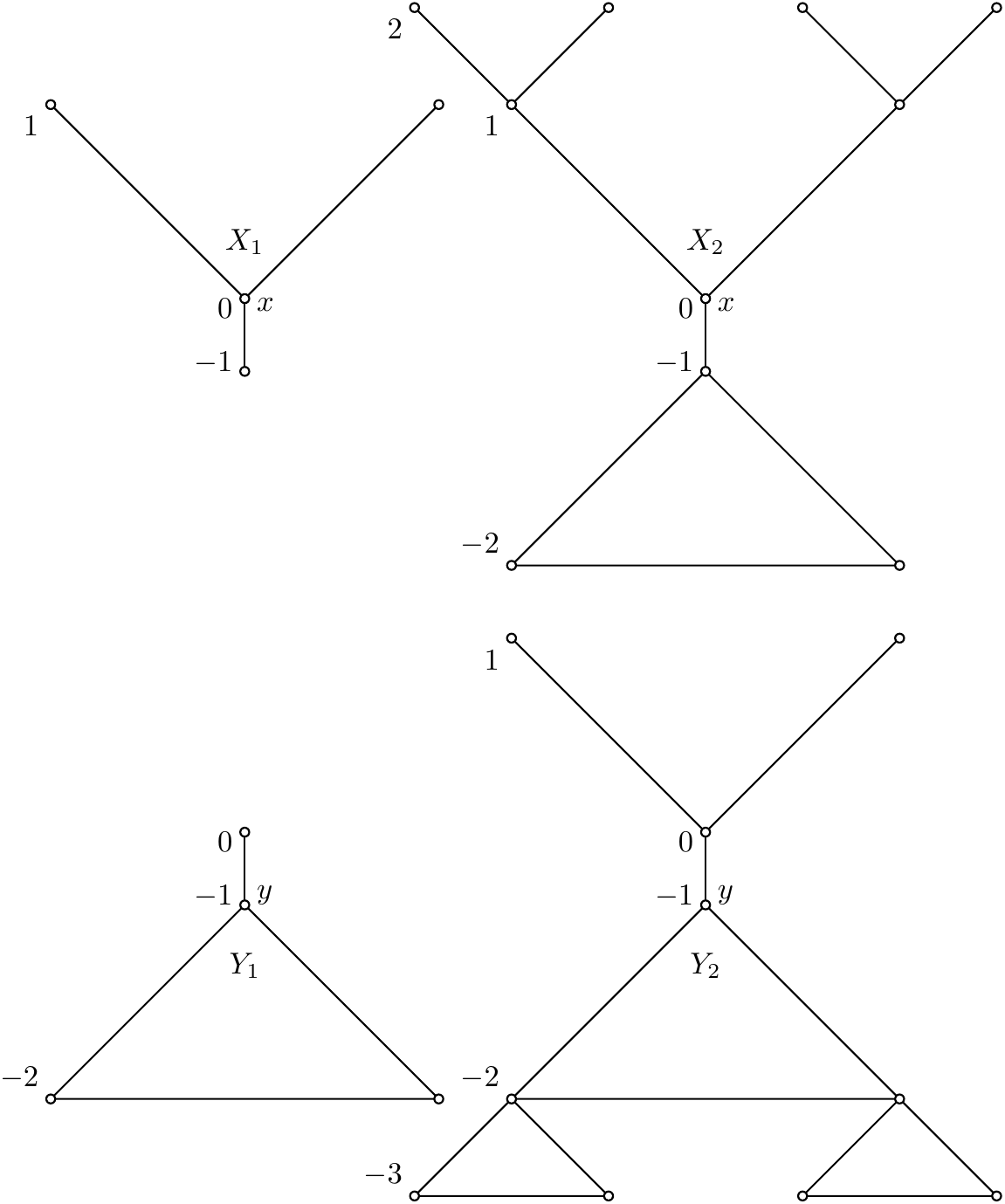}
\caption[The balls $X_1$, $X_2$, $Y_1$, and $Y_2$]{The balls $X_1 = B_X(x,1)$ and $X_2 = B_X(x,2)$ on the top, and $Y_1 = B_X(y,1)$ and $Y_2 = B_X(y,2)$ on the bottom.}
\label{balls_of_binary_tree_and_barred_binary_tree}
\end{center}
\end{figure}

To construct such an example, it is necessary to recall the infinite perfect binary tree $\Lambda$ and the barred binary tree $\bar\Lambda$. Let $x$ be the first ancestor of $\Lambda$, and let $y$ be the first ancestor of $\bar\Lambda$. The graph $X$ is defined as follows: $V(X) = V(\Lambda) \cup V(\bar\Lambda)$ and $E(X) = E(\Lambda) \cup E(\bar\Lambda) \cup \{\{x,y\}\}$. The result is shown in Figure \ref{weak_limits_of_adjacent_balls} with a labelling of its orbits. That is, $\Rcc(X) = \{[X,i] ~:~ i \in \ZZ\}$ where $[X,0] = [X,x]$ and $[X,-1] = [X,y]$. With this labelling, it is easy to describe the balls around $x$ and $y$. The balls of radius $1$ and $2$ are depicted in Figure \ref{balls_of_binary_tree_and_barred_binary_tree}. Indeed, $X_n = B_X(x,n)$ is induced by the union $\bigcup_{i=-n}^n \Aut(X)i$, and $Y_n = B_X(y,n)$ is induced by the union $\bigcup_{i=-(n+1)}^{n-1} \Aut(X)i$ for all positive integers $n$.

Next we use Theorem \ref{weak_limits_are_invariant_under_negligence} to more easily determine the weak limits of $(\Psi(X_n))$ and $(\Psi(Y_n))$. Let $X_n^1 = B_\Lambda(x,n - 1)$, $X_n^2 = B_{\bar\Lambda}(y,n - 1)$, $Y_n^1 = B_\Lambda(x,n - 2)$, and $Y_n^2 = B_{\bar\Lambda}(y,n)$ for all positive integers $n$ with $n \geq 2$. If $G_n = \{x\}$, then
\[
X_n \setminus G_n = 2X_n^1 + X_n^2
\]
and
\[
Y_n \setminus G_n = 2Y_n^1 + Y_n^2
\]
for all positive integers $n$ with $n \geq 2$.

By Theorem \ref{weak_limits_are_invariant_under_negligence}, it suffices to determine the weak limits of the sequences $(\Psi(X_n^1))$, $(\Psi(X_n^2))$, $(\Psi(Y_n^1))$, and $(\Psi(Y_n^2))$ because
\[
\lim_{n \to \infty} \frac{|V(G_n)|}{|V(X_n)|} = 0 = \lim_{n \to \infty} \frac{|V(G_n)|}{|V(Y_n)|}.
\]

Recall the graphs $S$ and $\bar{S}$, which were presented in the preliminaries. Proposition \ref{weak_limit_of_trees_barred_trees} and Corollary \ref{branches_equiv_to_full_tree} tell us that $(\Psi(X_n^1))$ and $(\Psi(X_n^2))$ converge weakly to $\mu^1 = \mu_S$ and $\mu^2 = \mu_{\bar{S}}$, respectively. Similarly, the weak limits of $(\Psi(Y_n^1))$ and $(\Psi(Y_n^2))$ are $\mu^1$ and $\mu^2$, respectively. Note that $|V(X_n)| = |V(Y_n)| = 3 \cdot 2^n - 2$, $|V(X_n^1)| = |V(X_n^2)| = 2^n - 1$, $|V(Y_n^1)| = 2^{n-1} - 1$, and $|V(Y_n^2)| = 2^{n+1} - 1$ for all positive integers $n$ with $n \geq 2$. By Theorem \ref{weak_limits_are_invariant_under_negligence}, $(\Psi(X_n))$ converges weakly to
\[
\left(\lim_{n \to \infty} \frac{2|V(X_n^1)|}{|V(X_n \setminus G_n)|}\right) \mu^1 + \left(\lim_{n \to \infty} \frac{|V(X_n^2)|}{|V(X_n \setminus G_n)|}\right) \mu^2 = \left(\frac{2}{3}\right) \mu^1 + \left(\frac{1}{3}\right) \mu^2
\]
and $(\Psi(Y_n))$ converges weakly to
\[
\left(\lim_{n \to \infty} \frac{2|V(Y_n^1)|}{|V(Y_n \setminus G_n)|}\right) \mu^1 + \left(\lim_{n \to \infty} \frac{|V(Y_n^2)|}{|V(Y_n \setminus G_n)|}\right) \mu^2 = \left(\frac{1}{3}\right) \mu^1 + \left(\frac{2}{3}\right) \mu^2,
\]
which are certainly distinct measures.

%
%
\subsection{Open problems}

Readers who have perused any other papers concerning the topics of unimodularity or weak limits of sequences of laws \cite{aldouslyons07,schramm08} are likely familiar with the following important, yet unanswered, question.

\begin{framed}
\begin{center}
``Is every unimodular measure the weak limit of a sequence of finite graphs?''
\end{center}
\end{framed}

The converse is true, and although the proof is omitted in this thesis, the reader is welcome to see this author's previous work \cite{artemenko11b} for a detailed argument.

Corollary \ref{unimodularity_for_vtransitive_graphs} tells us precisely when a connected vertex-transitive graph is judicial without relying on a specific measure. As previously mentioned, this author hopes for an extension of this result to more general judicial graphs.

Another question concerns the extreme points of $\U$, the set of unimodular measures. Proposition \ref{unimodular_sustained_by_connected_is_extreme_pt} tells us that a unimodular measure $\mu$ sustained by a connected graph is an extreme point. Whether or not every extreme point is a measure of this form is unknown.
\cleardoublepage

%
%
\phantomsection
\addcontentsline{toc}{section}{References}
\nocite{*}
\bibliographystyle{alpha}
\bibliography{on_weak_limits_and_unimodular_measures}
\cleardoublepage

%
%
\phantomsection
\addcontentsline{toc}{section}{Index}
\printindex
\end{document}